\numberwithin{equation}{section} 
\newcommand{\R}{\mathbb{R}}
\newcommand{\N}{\mathbb{N}}
\newcommand{\Z}{\mathbb{Z}}
\newcommand{\Prob}{\mathcal{P}}
\newcommand{\Leb}{\mathcal{L}} 
\newcommand{\C}{\mathcal{C}} 
\newcommand{\A}{\mathcal{A}} 
\newcommand{\T}{\mathcal{T}} 
\DeclareMathOperator*{\esssup}{ess\,sup}
\newcommand{\ddr}{~\mathrm{d}} 
\newcommand{\1}{\mathbbm{1}} 
\newcommand{\dst}[1]{\displaystyle{#1}}
\newtheorem{theo}{Theorem}[section]
\newtheorem*{theo*}{Theorem}
\newtheorem{prop}[theo]{Proposition}
\newtheorem{crl}[theo]{Corollary}
\newtheorem{lm}[theo]{Lemma}
\newtheorem{defi}[theo]{Definition}
\newtheorem{remark}[theo]{Remark}
\newtheorem{asmp}{Assumption}
\newcommand{\intervalle}[4]{\mathopen{#1}#2
                            \mathclose{}\mathpunct{},#3
                            \mathclose{#4}}
\newcommand{\intervalleff}[2]{\intervalle{[}{#1}{#2}{]}}
\newcommand{\intervalleof}[2]{\intervalle{(}{#1}{#2}{]}}
\newcommand{\intervallefo}[2]{\intervalle{[}{#1}{#2}{)}}
\newcommand{\intervalleoo}[2]{\intervalle{(}{#1}{#2}{)}}
\begin{document}

\title{Optimal density evolution with congestion:\\ $L^\infty$ bounds via flow interchange techniques\\ and applications to variational Mean Field Games}
\author{Hugo Lavenant and Filippo Santambrogio}

\maketitle
\abstract{We consider minimization problems for curves of measure, with kinetic and potential energy and a congestion penalization, as in the functionals that appear in Mean Field Games with a variational structure. We prove $L^\infty$ regularity results for the optimal density, which can be applied to the rigorous derivations of equilibrium conditions at the level of each agent's trajectory, via time-discretization arguments, displacement convexity, and suitable Moser iterations. Similar $L^\infty$ results have already been found by P.-L. Lions in his course on Mean Field Games, using a proof based on the use of a (very degenerate) elliptic equation on the dual potential (the value function) $\varphi$, in the case where the initial and final density were prescribed (planning problem). Here the strategy is highly different, and allows for instance to prove local-in-time estimates without assumptions on the initial and final data, and to insert a potential in the dynamics.}

\tableofcontents

\section{Introduction}

The problem of optimal density evolution with congestion is a very natural question where an initial distribution $\rho_0$ of mass (particles, individuals\dots) is given, and it has to evolve from time $t=0$ to time $t=T$ by minimizing an overall energy. This typically involves its kinetic energy expenditure and a cost depending on congestion effects, i.e. on how much it is concentrated along its trajectory. Then, at time $t=T$, either the final configuration $\rho_T$ is prescribed, or a final cost depending on $\rho_T$ is also considered. 

When $\rho_T$ is fixed and no congestion effect is present, the only quantity to be minimized is the kinetic energy and this boils down to what is usually known as the dynamic formulation of the optimal transport problem, studied by Benamou and Brenier in \cite{Benamou2000}. From the fluid mechanics point of view, this model corresponds to that of particles of a pressureless gas moving without acceleration in straight lines, and without interaction with each other. From the geometric point of view, this variational problem consists in looking for a geodesic in the Wasserstein space $W_2$ (for references on optimal transport and Wasserstein spaces, see \cite{Villani2003,SantambrogioOTAM}). Inserting congestion effects corresponds to looking at deformed geodesics, i.e. curves which are optimal for other criteria which do not involve only their length (weighted lengths, length + penalizations\dots), and to add pressure terms in the corresponding gas equations. For instance, in \cite{Brancolini2006,Ambrosio2007} geodesics in the Wasserstein space for different weights, minimizing energies of the form $\int E(\rho(t))|\dot{\rho_t}|\ddr t$ were considered, including cases where $E$ penalized congestion. Yet, the case which is now the most studied is the one where a penalization on congestion is added to the kinetic energy, thus minimizing $\int (|\dot{\rho_t}|^2+E(\rho(t)))\ddr t$, as it was done in \cite{Buttazzo2009}. Since this is the kind of problems this paper will be devoted to, it is important to clarify precisely its form. We can either look for a curve $\rho:[0,T]\to \Prob(\Omega)$ which minimizes 
\begin{equation}\label{mini with rhorho}
\rho\mapsto \int_0^T \frac12 |\dot{\rho_t}|^2\ddr t+\int_0^T E(\rho_t)\ddr t+\Psi(\rho_T)
\end{equation}
with $\rho_0$ prescribed (here $ |\dot{\rho_t}|$ is the metric derivative, i.e. the speed of this curve for the distance $W_2$, see Section \ref{sec2.2}), or look for a pair $(\rho,\mathbf v)$ minimizing
\begin{equation}\label{mini with rho v}
\rho\mapsto \int_0^T\int_\Omega \frac12\rho_t|\mathbf{v}_t|^2\ddr t+\int_0^T E(\rho_t)\ddr t+\Psi(\rho_T)
\end{equation}
under the same constraint on $\rho_0$ and a differential constraint $\partial_t\rho_t+\nabla\cdot(\rho_t \mathbf{v}_t)=0$. Here $\Omega\subset\R^d$ is a bounded and connected domain and the continuity equation is satisfied in the weak sense on $[0,T]\times\R^d$ (which corresponds to imposing no-flux boundary conditions on $\partial\Omega$), The equivalence between the two formulations essentially comes from \cite{Benamou2000} and from the characterization of absolutely continuous curves in the Wasserstein space studied in \cite{Ambrosio2005}.The functional $E$ usually takes the form of an integral functional such as 
$$E(\rho):=\int_\Omega f(\rho(x))\ddr x+\int_\Omega V(x)\rho(x)\ddr x,$$
for a convex function $f$ and a given potential $V$ (where we identify measures with their densities; for the definition for measures which are not absolutely continuous, see Section \ref{sec2.3}). The final penalization $\Psi$ can be either a functional of the same form of $E$, or a constraint which prescribes $\rho_T$.

The interest for this minimization problem, which is already very natural in itself, has increased a lot after the introduction in 2006 of the theory of Mean Field Games (MFG) (introduced essentially at the same time by Lasry and Lions, \cite{Lasry2006-I,Lasry2006-II,Lasry2007}, and by Huang, Malham\'e and Caines \cite{Huang2006}). In the easiest version of these games, we consider a population of agents where everybody chooses its own trajectory, solving
\begin{equation}\label{cost}
\min\;\int_0^T \left(\frac{|x'(t)|^2}{2}+V(x(t))+g(\rho_t(x(t)))\right)\ddr t + \Psi(x(T)),
\end{equation}
with given initial point $x(0)$. Here $g$ is a given increasing function of the density $\rho_t$ at time $t$, i.e. an individual cost for each agent penalizing congested areas. The difficulty in the model is that every agent optimizes given the density of all agents $\rho_t$, but this density depends on turn on the choices of all the agents. An equilibrium problem arises, and we look for a Nash equilibrium in this continuum game (with infinitely many negligible players, who move continuously in time in a continuous space). This can be translated into a system of PDEs
\begin{equation*}\label{MFGgrho}
\begin{cases}-\partial_t\varphi+\frac{|\nabla\varphi|^2}{2}=V(x)+g(\rho),\\
		\partial_t \rho -\nabla\cdot (\rho \nabla \varphi)=0,\\
		\varphi(T,x)=\Psi(x),\quad\rho(0,x)=\rho_0(x).\end{cases}
\end{equation*}		
This forward-backward system is composed of a Hamilton-Jacobi equation for the value function $\varphi$ of the above optimization problem where the density $\rho$ appears at the right-hand side and of a continuity equation for $\rho$ which is advected by the vector field $\mathbf{v}=-\nabla\varphi$. Taking the gradient of the HJ equation gives a formula for the Lagrangian acceleration
$$\partial_t \mathbf{v}_t+(\mathbf{v}_t\cdot \nabla)\mathbf{v}_t=\nabla (V+g(\rho_t)),$$
where $g(\rho_t)$ plays the role of a pressure to be added to the potential $V$, as it is typical in compressible fluid mechanics.
Alternatively, the same equilibrium problem can be formulated in terms of a probability measure $Q$ on the set $H^1([0,T];\Omega)$ of paths valued in $\Omega$, defining $\rho_t=(e_t)_\# Q$ (where $e_t:H^1([0,T];\Omega)\to\Omega$ is the evaluation map at time $t$), and requiring $(e_0)_\#Q=\rho_0$ and that $Q$-a.e. curve is optimal for \eqref{cost} with this choice of $\rho_t$. For a general introduction to mean field games, other than the papers by Lasry-Lions and Huang-Malham\'e-Caines, the reader can consult the lecture notes by Cardaliaguet \cite{Cardaliaguet2010}, based on the lectures given by P.-L. Lions at Coll\`ege de France between 2006 and 2012 (\cite{LionsVIDEO}). In particular, for the simple model which is the object of this paper, and which is deterministic and first-order (no random effect in the motion of the agents, and no diffusion in the equations), we also refer to \cite{Cardaliaguet2015}.

The remarkable fact is that this class of equilibrium problem has a variational origin, and one can find an equilibrium by minimizing \eqref{mini with rho v} choosing $E(\rho):=\int f(\rho)+V(x)\rho$ with $f'=g$ (for a review on variational mean field games and on these questions, we refer to \cite{Benamou2016}). The optimality condition on the optimal $(\rho,\mathbf v)$ will indeed show that we have $\mathbf{v}=-\nabla\varphi$ where $\varphi$ solves the HJ part of \eqref{MFGgrho}, thus getting a solution of the system. The same can also be formally formulated in terms of probabilities $Q$ on the set of path. 

Yet, these considerations are essentially formal and not rigorous, so far. Indeed, the difficulty is the following: the function $h(t,x):=V(x)+g(\rho_t(x))$ is obtained from the density of a measure, and hence it is only defined a.e. Integrating it on a curve, as we do when we consider the action $\int_0^T h(t,x(t))\ddr t$ in \eqref{cost} has absolutely no meaning! Of course, it would be different if we could prove some regularity (for instance, continuity) on $\rho_t$. The question of the regularity in mean field games is a very challenging one and deserves high attention. In \cite{Cardaliaguet2016} a stategy to overcome this difficulty, taken from \cite{Ambrosio2009}, is used: indeed, it is sufficient to chose a suitable representative of $h$ to give a precise meaning to the integral of $h$ on a curve, and the correct choice is 
$$\hat h(t,x):=\limsup_{r\to 0} h_r(t,x):=\fint_{B(x,r)}h(t,y)\ddr y;$$
to prove that $Q$ is concentrated on optimal curves for $\hat h$ it is then enough to write estimates with $h_r$ and then pass to the limit as $r\to 0$. This requires an upper bound on $h_r$, and the natural assumption is to require that the maximal function $Mh:=\sup_r h_r$ is $L^1$ in space and time. Thanks to well-known results in harmonic analysis, $h\in L^1$ is not enough for this but $h\in L^m$ for $m>1$ is instead enough. Once integrability of $Mh$ is obtained, then one can say that the optimal measure $Q$ is concentrated on curves which minimize in \eqref{cost} in the class of curves $x(\cdot)$ such that $\int_0^T Mh(t,x(t))\ddr t<+\infty$. These curves are almost all curves in a suitable sense, thanks to the integrability properties of $Mh$ in space-time, but they are in general {\it not all} curves. 

It is interesting to observe that the strategy of \cite{Cardaliaguet2016} and \cite{Ambrosio2009} was first used in the framework of variational models for the incompressible Euler equation, in the sense of Brenier \cite{Brenier1989,Brenier1999}. Indeed, the problem of incompressible evolution has many similarities with the one of evolution with congestion effects, with the only difference that instead of penalizing high densities there is a constraint $\rho=1$. Also, the precise mean field game studied in \cite{Cardaliaguet2016} is of very similar nature, since it included the constraint $\rho\leq 1$. Moreover, the techniques used in \cite{Cardaliaguet2016} to prove this extra summability of $h$ come from the incompressible Euler framework: they are techniques based on convex duality taken from \cite{Brenier1999} and later improved in \cite{Ambrosio2008}, which allow, in this case, to prove $h\in L^2_{loc}((0,T);BV_{loc}(\Omega))$. In the framework of more standard mean field games (i.e. with density penalization instead of constraints), the same technique (presented in more generality on some simpler examples in \cite{San16a}) has been used in \cite{Prosinski2017} to prove $H^1$ regularity results on the density $\rho$. 

\bigskip

In the present paper, we present $L^\infty$ bounds on the optimal $\rho$. For applications to MFG, whenever $L^\infty$ results are available, it is possible to avoid all the assumptions on the maximal function $Mh$ and obtain optimality in the larger class of all competing curves. This explains the interest of these results for MFG, but of course the reader can easily guess that they are interesting in themselves for the variational problem. 

The question of the $L^\infty$ regularity of $\rho$ was already studied, in the MFG framework, by P.-L. Lions (see the second hour of the video of the lecture of November 27, 2011, in \cite{LionsVIDEO}), but the analysis was limited to global results when both $\rho_0$ and $\rho_T$ are fixed and $L^\infty$, and no potential $V$ is considered. The technique was essentially taken from degenerate elliptic PDEs (note that adapting from global to local results would be very difficult, without strong assumptions on the degeneracy and growth of the corresponding equation). Here what we do is different. The technique is based on the time-discretization of \eqref{mini with rhorho} in the form
\begin{equation}\label{timediscr}
\min\left\{ \sum_{k=1}^N \frac{W_2^2(\rho_{(k-1)\tau}, \rho_{k \tau})}{2 \tau} + \sum_{k=1}^{N-1} \tau E(\rho_{k\tau}) + \Psi(\rho_{N\tau})\right\},
\end{equation}
where $\tau = T/N$. The interesting fact is that, as a necessary optimality condition, each measure $\rho_{k\tau}$ with $0 < k<N$ minimizes
$$\rho\;\mapsto\;\frac{W_2^2(\rho_{(k-1)\tau}, \rho)}{2 \tau} + \frac{W_2^2(\rho_{(k+1)\tau}, \rho)}{2 \tau}+ \tau E(\rho),$$
which is very similar to what we see in the so-called Jordan-Kinderlehrer-Otto scheme for the gradient flow of the functional $E$ (see \cite{Jordan1998} and \cite{Ambrosio2005}). The main difference is that we have now two Wasserstein terms, one referring to the distance to the previous measure and one to the next one. Techniques from the JKO scheme can be used, and in particular the so-called {\it flow-interchange} technique (introduced in \cite{Matthes2009}).  Essentially, this technique consists in evaluating how much decreases another energy $U$ along the gradient flow of $E$. In the JKO framework, it is usually used to obtain estimates of the form
$$U(\rho_{k\tau})-U(\rho_{(k+1)\tau})\geq \tau \int \left(\mbox{something positive}\right),$$
which allows to say that $U$ is decreasing and to obtain integral estimates on the right hand side (r.h.s.) above. In this variational framework, which corresponds to a second-order-in-time equation, instead of monotonicity we obtain convexity:
$$\frac{U(\rho_{(k-1)\tau})+U(\rho_{(k+1)\tau})-2U(\rho_{k\tau})}{\tau^2}\geq \int \left(\mbox{something positive}\right)$$
(more precisely: the integral term in the r.h.s. is nonnegative if $V=0$ and extra lower-order terms appear in presence of a potential $V$).
This allows for instance to obtain convexity in time of all the quantities of the form $\int \rho_t^m(x)\ddr x$ when $V=0$ (a similar technique was used with similar results by the first author in \cite{Lavenant2017}). A global $L^\infty$ result if $\rho_0,\rho_T\in L^\infty$ are fixed is then easy to deduce in this case (actually, we will not even state it explicitly in this paper). Moreover, using the structure of the right-hand side and with tedious iterations inspired by Moser \cite{Moser1960}, we are also able to provide interior $L^\infty$ regularity independent of the boundary data, and regularity on intervals of the form $[t_1,T]$ under some assumptions on the penalization $\Psi$. This very result is, by the way, the natural one for MFG applications, and improves upon the results announced in  \cite{LionsVIDEO}. 

The paper is organized as follows: after this brief introduction Section 1 also contains a short summary of the main ideas of the proof, so that the reader does not get lost in the technical details. Then, in Section 2 we summarize the preliminaries about curves and functionals on the Wasserstein space, and give a precise statement for the variational problem we consider and the results we prove, distinguishing into two cases depending on the convexity of the congestion penalization $f$ (in terms of lower bounds on $f''$). In Section 3 we present and prove the estimates that are obtained in this framework via the flow interchange technique. These estimates allow to bound increasing $L^m$ norms of the solution, and in Section 4 we explain how to iterate in order to transform them into $L^\infty$ estimates on the limit of the discretized problems. This involves a technical difficulty, as one needs a reverse Jensen inequality in time (passing from $\int ||\rho_t||_{L^{\beta m}}^m \ddr t$ to $\left(\int ||\rho_t||_{L^{\beta m}}^{\beta m}\ddr t\right)^{1/\beta}$); this can be fixed because we already proved a convexity-like property for $t\mapsto  ||\rho_t||_{L^{\beta m}}^{\beta m}$ but is quite technical.  In Section 5 we detail how to pass to the limit from the time-discretization to the continuous problem, and in the Appendix we give a proof of the reverse Jensen inequality.

\subsection{Structure of the proof}

As the structure of the proof of $L^\infty$ bounds may be hidden behind the technical details, we sketch in this subsection the formal computations on which our main results rely. Let us consider the simplest case, the one where there is no interior potential, and let us not worry about the temporal boundary terms for the moment. The variational problem reads
\begin{equation*}
\min \left\{ \int_0^T \frac{1}{2} |\dot{\rho}_t|^2 \ddr t + \int_0^T \int_\Omega f(\rho_t(x)) \ddr x \ddr t + \Psi( \rho_T) \ : \ \rho : \intervalleff{0}{T} \to \Prob(\Omega),\;\rho_0=\overline{\rho_0} \right\}. 
\end{equation*} 
First, we consider the time-discretization in \eqref{timediscr} and, we apply the flow-interchange technique that we mentioned before (and that will be detailed later, see Section \ref{flow int sec 3}) to the functional $U=U_m$, where
\begin{equation*}
U_m(\rho) := \frac{1}{m(m-1)} \int_\Omega \rho(x)^m \ddr x,
\end{equation*}
when $m>1$ ($U_1(\rho)$ can be defined as the Boltzmann entropy of $\rho$, and the normalization constants are chosen for coherence with this case).
The flow interchange technique will allow to obtain an estimate of the form
\begin{equation}\label{discrete conv 1.1}
\frac{U_m(\rho_{(k-1)\tau})+U_m(\rho_{(k+1)\tau})-2U_m(\rho_{k\tau})}{\tau^2}\geq C(m) \int f''(\rho_{k\tau}) \rho_k^{m-1}|\nabla\rho_{k\tau}|^2\geq 0.
\end{equation}

This gives a discrete-in-time convex behavior for the quantity $U_m(\rho_t)$. In the case $f=0$, this is basically a restatement of McCann convexity principle \cite{Mccann1997}. It is easy to see that, if $\rho_0,\rho_T\in L^m$ (which is an assumption on $\rho_0$ and on $\Psi$), then automatically the same $L^m$ bound is satisfied by all measures $\rho_t$. If $\rho_0,\rho_T\in L^\infty$, the same easily passes to the limit as $m\to \infty$ thus providing $L^\infty$ bounds.

Moreover, the flow interchange applied to the last time step $k=N$ (with $N\tau=T$), gives a result of the form
\begin{equation} \label{equation_flow_interchange_Neumann}
\frac{U_m(\rho_{N\tau})-U_{m}(\rho_{(N-1)\tau})}{\tau}\leq b(m) U_m(\rho_{N\tau}),
\end{equation}
where the constant $b(m)$ depends on the penalization $\Psi$.
This acts as a sort of Neumann condition for the function $t\mapsto U_m(\rho_t)$ and allows to obtain the bound on $U_m(\rho_t)$ with the only assumption $\rho_0\in L^m$, with no need to assume the same for $\rho_T$. However, this can only be adapted to the limit $m\to\infty$ in the case where $\Psi$ has the form $\Psi(\rho):=\int g(\rho(x))\ddr x$ for a convex $g$, without potential terms, so that $b(m)=0$. Otherwise, the dependence of $b(m)$ upon $m$ prevents from letting $m\to\infty$.

Our paper includes $L^m$ and $L^\infty$ results which do not require assumptions on $\rho_0$, and which will be, of course, only of local nature on $(0,T]$. Our proof will look like Moser's proof of regularity for elliptic equations \cite{Moser1960}, as it will rely on a fine analysis of the growth (when $m \to + \infty$) of quantities of the form $\int_{T_1}^{T_2} \rho_t^m$. Indeed, one can guess from \eqref{discrete conv 1.1} that we may write (at the limit when $\tau\to 0$)
\begin{equation}
\label{equation_flow_interchange_heuristic}
\frac{d^2}{dt^2} U_m(\rho_t) \geqslant \int_\Omega |\nabla \rho_t|^2 \rho_t^{m-1} f''(\rho_t).
\end{equation}



To estimate more precisely the r.h.s. of \eqref{equation_flow_interchange_heuristic}, a natural assumption is $f''(s) \geqslant s^\alpha$ (with $\alpha$ which could be negative, of course): if this is the case, one can check that the integrand of the r.h.s. is larger than $|\nabla (\rho_t^{(m+1+\alpha)/2})|^2$ (up to a constant depending polynomially in $m$). Using the Sobolev injection $H^1 \hookrightarrow L^{2d/(d-2)}$, one can conclude (neglecting the $0$-order term of the $H^1$ norm of $\rho_t^{(m+1+\alpha)/2}$), with $1 < \beta < d/(d-2)$, that 
\begin{equation*}
C(m) \frac{d^2}{dt^2} U_m(\rho_t) \geqslant \left( \int_\Omega \rho_t^{\beta(m+1+\alpha)} \right)^{1/\beta}
\end{equation*} 
In the case $\alpha \geqslant -1$, we see that the r.h.s. is larger than $U_{\beta m}(\rho_t)^{1/ \beta}$. In other words, we have obtained a control of $U_{\beta m}(\rho)$ in terms of $U_m(\rho)$. Such a control can be iterated. If we take a positive cutoff function $\chi$ which is equal to $1$ on $\intervalleff{T_1 - \varepsilon}{T_2 + \varepsilon}$ and which is null outside $\intervalleff{T_1 - 2\varepsilon}{T_2 + 2\varepsilon}$, multiply \eqref{equation_flow_interchange_heuristic} by $\chi$ and integrate the left hand side (l.h.s.) by parts twice, we can say that 
\begin{equation*}
\int_{T_1 - \varepsilon}^{T_2 + \varepsilon} U_{\beta m}(\rho_t)^{1/ \beta} \ddr t \leqslant C(m, \varepsilon) \int_{T_1 - 2\varepsilon}^{T_2 + 2\varepsilon} U_m(\rho_t) \ddr t, 
\end{equation*}
where the constant $C(m, \varepsilon)$ grows not faster than a polynomial function of $m$ and $\varepsilon^{-1}$. We have to work a little bit more on the l.h.s. because we want to exchange the power $1/\beta$ and the integral sign, and unfortunately Jensen's inequality gives it the other way around. To this extent, we rely on the following observation: as the function $U_{\beta m}$ is convex (this can be seen in \eqref{equation_flow_interchange_heuristic}) and positive, it is bounded on $\intervalleff{T_1}{T_2}$ either by its values on $\intervalleff{T_1}{T_1 - \varepsilon}$ or on $\intervalleff{T_2}{T_2 + \varepsilon}$, thus we have a "reverse Jensen's inequality"
\begin{equation*}
\left( \int_{T_1}^{T_2} U_{\beta m}(\rho_t) \ddr t \right)^{1 / \beta} \leqslant \frac{(T_2 - T_1)^{1 / \beta}}{\varepsilon} \left( \int_{T_1 - \varepsilon}^{T_1} U_{\beta m}(\rho_t)^{1/ \beta} \ddr t + \int_{T_2}^{T_2 + \varepsilon} U_{\beta m}(\rho_t)^{1/ \beta}  \right).
\end{equation*}    
Combining this inequality with the estimation we have on the r.h.s., we deduce that 
\begin{equation*}
\left( \int_{T_1}^{T_2} U_{\beta m}(\rho_t) \ddr t \right)^{1 / \beta} \leqslant C(m, \varepsilon) \int_{T_1 - 2\varepsilon}^{T_2 + 2\varepsilon} U_m(\rho_t) \ddr t,
\end{equation*}
where the new constant $C(m, \varepsilon)$ has also a polynomial behavior in $m$ and $\varepsilon^{-1}$. This estimation is ready to be iterated. Indeed, setting $m_n := \beta^n m_0$ and $\varepsilon_n = 2^{-n} \varepsilon_0$, given the moderate growth of $C(m, \varepsilon)$, it is not difficult to conclude that 
\begin{equation*}
\limsup_{n \to + \infty} \left( \int_{T_1 - \varepsilon_n}^{T_2 + \varepsilon_n} U_{m_n}(\rho_t) \ddr t \right)^{1 / m_n} < + \infty.
\end{equation*} 
As the l.h.s. controls the $L^\infty$ norm of $\rho$ on $\intervalleff{T_1}{T_2} \times \Omega$, this is enough to conclude that $\rho$ is bounded locally in time and globally in space.

Let us comment the technical refinements and generalization of the above argument that are used in the present article: 
\begin{itemize}
\item[•] As we do not have enough time regularity to differentiate twice w.r.t. time, we decided to work with a time discretization of the problem. Hence, instead of\eqref{equation_flow_interchange_heuristic} we use \eqref{discrete conv 1.1}.
\item[•] If we add an interior potential, the r.h.s. of \eqref{equation_flow_interchange_heuristic} contains lower order terms that are controlled by the term involving $f''$. However, the sign of the l.h.s. is no longer known and the function $U_m$ is no longer convex but rather satisfies
\begin{equation*}
\frac{d^2}{dt^2} U_m(\rho_t) + \omega^2 U_m(\rho_t) \geqslant 0,
\end{equation*}
where $\omega$ grows linearly with $m$. In particular, the "reverse Jensen inequality" becomes more difficult to prove, but it is still doable. 
\item[•] With assumptions on the final penalization, the regularity can be extended to the final time. More precisely, if we assume that the final penalization is given by the sum of a potential term and a congestion term, then formally (and this can be proven by taking the limit $\tau \to 0$ of \eqref{equation_flow_interchange_Neumann}), 
\begin{equation}
\label{equation_flow_interchange_boudnary_heuristic}
\left. \frac{d}{dt} U_m(\rho_t) \right|_{t=T} \leqslant b(m) U_m(\rho_{T}),
\end{equation}   
where the constant $b(m)$ depends on the potential and can be taken equal to $0$ if there is no potential. This inequality enables to control the value of $U_m$ at the boundary $t = T$ by its values in the interior. Thus the same kind of iterations can be performed and gives $L^\infty$ regularity up to the boundary.  
\item[•] If $\alpha < -1$, we only have a control of $U_m$ by $U_{\beta(m+1+\alpha)}$. Thus we must start the iterative procedure with a value $m$ such that $m < \beta (m+1+\alpha)$, i.e. we must impose \emph{a priori} some $L^m$ regularity on $\rho$ (with a $m$ which depends on $\alpha$ and $\beta$, the latter depending itself only on the dimension of the ambient space). Such a regularity is imposed by assuming that $\rho_0$ (which is fixed) is in $L^m(\Omega)$ and that the boundary penalization in $t=T$ is the sum of a potential and a congestion term. Indeed, if this is the case, the boundary condition \eqref{equation_flow_interchange_boudnary_heuristic} combined with the interior estimate \eqref{equation_flow_interchange_heuristic} shows that if $T$ is small enough (given the potentials and the congestion function $f$), the $L^m$ norm of $\rho$ on $\intervalleff{0}{T} \times \Omega$ must be bounded.  
\end{itemize} 

\section{Notation and presentation of the optimal density evolution problem}

In all the sequel, $\Omega$ will denote the closure of an open bounded convex domain of $\R^d$ with smooth boundary. To avoid normalization constants, we will assume that its Lebesgue measure is $1$. The generalization to the case where $\Omega$ is the $d$-dimensional torus is straightforward and we do not address it explicitly. The space of probability measures on $\Omega$ will be denoted by $\Prob(\Omega)$. The Lebesgue measure restricted to $\Omega$, which is therefore a probability measure, will be denoted by $\Leb$. The space $\Prob(\Omega)$ is endowed with the weak-* topology, i.e. the topology coming from the duality with $C(\Omega)$ (the continuous functions from $\Omega$ valued in $\R$).  

\subsection{The Wasserstein space}

The space $\Prob(\Omega)$ of probability measures on $\Omega$ is endowed with the Wasserstein distance: if $\mu$ and $\nu$ are two elements of $\Prob(\Omega)$, the $2$-Wasserstein distance $W_2(\mu, \nu)$ between $\mu$ and $\nu$ is defined by 
\begin{equation}
\label{equation_definition_Wasserstein_distance}
W_2(\mu, \nu) := \sqrt{ \min \left\{ \int_{\Omega \times \Omega} |x-y|^2 \ddr \gamma(x,y) \ :  \ \gamma \in \Prob(\Omega \times \Omega) \text{ and } \pi_0 \# \gamma = \mu, \ \pi_1 \# \gamma = \nu \right\}  }.
\end{equation} 
In the formula above, $\pi_0$ and $\pi_1 : \Omega \times \Omega \to \Omega$ stand for the projections on respectively the first and second component of $\Omega \times \Omega$. If $T : X \to Y$ is a measurable application and $\mu$ is a measure on $X$, then the image measure of $\mu$ by $T$, denoted by $T \# \mu$, is the measure defined on $Y$ by $(T \# \mu)(B) = \mu(T^{-1}(B))$ for any measurable set $B \subset Y$. It can also be defined by 
\begin{equation*}
\int_Y a(y) \ddr (T \# \mu)(y) := \int_X a(T(x)) \ddr \mu(x), 
\end{equation*} 
this identity being valid as soon as $a : Y \to \R$ is any integrable function. For general results about optimal transport, the reader might refer to \cite{Villani2003} or \cite{SantambrogioOTAM}. We recall that $W_2$ admits a dual formulation: for any $\mu, \nu \in \Prob(\Omega)$, 
\begin{equation}
\label{equation_defintion_Wasserstein_dual}
W_2(\mu, \nu) = \sqrt{ \max \left\{ \int \varphi \ddr \mu + \int \varphi^{c} \ddr \nu \ : \ \varphi \in C(\Omega) \right\} },
\end{equation} 
where $\varphi^{c}(y) := \inf_{x \in \Omega} ( |x-y|^2 - \varphi(x))$ for any $y \in \Omega$. A function $\varphi\in \C(\Omega)$ which is optimal in \eqref{equation_defintion_Wasserstein_dual} is called a Kantorovitch potential for the transport from $\mu$ to $\nu$. The following result, giving the derivative of the Wasserstein distance, can be found in \cite[Propositions 7.18 and 7.19]{SantambrogioOTAM}. 

\begin{prop}
\label{proposition_first_variation_W2}
Let $\mu, \nu \in \Prob(\Omega)$ and assume that $\mu$ is absolutely continuous w.r.t. $\Leb$ and that its density is strictly positive a.e. Then there exists a unique Kantorovitch potential $\varphi$ for the transport from $\mu$ to $\nu$. Moreover, $\varphi$ is Lipschitz and if $\tilde{\mu} \in \Prob(\Omega) \cap L^\infty(\Omega)$, then 
\begin{equation*}
\lim_{\varepsilon \to 0} \frac{W_2^2( (1-\varepsilon) \mu + \varepsilon \tilde{\mu} , \nu) - W_2^2(\mu, \nu)}{\varepsilon} = \int_\Omega \varphi \ddr (\tilde{\mu} - \mu).
\end{equation*}
\end{prop}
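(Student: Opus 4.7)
The plan is to combine the dual formulation \eqref{equation_defintion_Wasserstein_dual} with a stability argument for Kantorovich potentials.

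For existence and regularity of $\varphi$, I would restrict the supremum in \eqref{equation_defintion_Wasserstein_dual} to $c$-concave functions, i.e.\ those of the form $\varphi = (\varphi^c)^c$. Any such $\varphi$ is, on the bounded set $\Omega$, an infimum of the equi-Lipschitz family $x\mapsto |x-y|^2 - \varphi^c(y)$, hence Lipschitz with constant at most $2\,\mathrm{diam}(\Omega)$; existence of a maximizer then follows from Arzelà--Ascoli. For uniqueness, I would use that $\mu$ is absolutely continuous, so any Kantorovich potential $\varphi$ is differentiable $\mu$-a.e.\ and $\nabla \varphi$ encodes the (unique) optimal transport map from $\mu$ to $\nu$; strict positivity of the density of $\mu$ on the connected set $\Omega$ then forces two such potentials to differ by a single additive constant.

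For the derivative formula, I would prove two matching inequalities. Writing $\mu_\varepsilon := (1-\varepsilon)\mu + \varepsilon \tilde{\mu}$, the $\liminf$ side is immediate by using $\varphi$ itself as a competitor in the dual problem for $\mu_\varepsilon$:
\begin{equation*}
W_2^2(\mu_\varepsilon, \nu) \geq \int_\Omega \varphi \ddr \mu_\varepsilon + \int_\Omega \varphi^c \ddr \nu = W_2^2(\mu,\nu) + \varepsilon \int_\Omega \varphi \ddr (\tilde{\mu} - \mu).
\end{equation*}
For the $\limsup$ side, for each $\varepsilon > 0$ I would pick an optimal $c$-concave potential $\varphi_\varepsilon$ for $(\mu_\varepsilon, \nu)$, normalized by $\varphi_\varepsilon(x_0) = 0$ at a fixed point $x_0 \in \Omega$. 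These $\varphi_\varepsilon$ are equi-Lipschitz with the same constant $2\,\mathrm{diam}(\Omega)$ and therefore equi-bounded. Using $\varphi_\varepsilon$ as a competitor in the dual problem for $(\mu,\nu)$ yields the reverse inequality
\begin{equation*}
W_2^2(\mu_\varepsilon, \nu) - W_2^2(\mu, \nu) \leq \varepsilon \int_\Omega \varphi_\varepsilon \ddr (\tilde{\mu} - \mu).
\end{equation*}
By Arzelà--Ascoli, along a subsequence $\varphi_\varepsilon \to \psi$ uniformly; passing to the limit in the duality identities characterizing $\varphi_\varepsilon$ shows that $\psi$ is itself a Kantorovich potential for $(\mu, \nu)$, so by the uniqueness step $\psi = \varphi + \mathrm{const}$, and the additive constant drops out of the integral against the zero-mass signed measure $\tilde{\mu} - \mu$.

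The main delicate point I anticipate is the uniqueness of $\varphi$ up to constants: without positivity of the density of $\mu$ throughout the connected domain $\Omega$, the limit $\psi$ could fail to coincide with $\varphi$ on regions of zero $\mu$-mass and the $\limsup$ argument would not produce the correct value. The $L^\infty$ assumption on $\tilde{\mu}$ plays no essential role in this formal derivation (the perturbed measure $\mu_\varepsilon$ remains a probability measure regardless); it will only be used downstream, when Proposition \ref{proposition_first_variation_W2} is plugged into the flow-interchange computations.
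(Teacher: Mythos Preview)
The paper does not give its own proof of this proposition; it simply cites \cite[Propositions 7.18 and 7.19]{SantambrogioOTAM}. Your argument is correct and is essentially the standard one found in that reference: equi-Lipschitzness of $c$-concave functions, uniqueness of the potential up to additive constants from strict positivity of the density on the connected domain, the easy $\liminf$ bound by plugging $\varphi$ into the dual of $(\mu_\varepsilon,\nu)$, and the $\limsup$ bound via compactness of the family $(\varphi_\varepsilon)$ together with the uniqueness step. Your observation that the $L^\infty$ hypothesis on $\tilde\mu$ is not actually needed for the derivative formula (since $\varphi$ is continuous on the compact $\Omega$) is also correct; in the paper this assumption is only there because that is how the proposition is stated in the cited source and how it is applied downstream.
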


We recall that $W_2$ defines a metric on $\Prob(\Omega)$ that metrizes the weak-* topology. Therefore, thanks to Prokhorov Theorem, the space $(\Prob(\Omega), W_2)$ is a compact metric space. We also recall that $(\Prob(\Omega), W_2)$ is a geodesic space. If $\mu$ and $\nu$ are probability measures such that $\mu$ admits a strictly positive density w.r.t. $\Leb$, then there exists a unique constant-speed geodesic $\rho : \intervalleff{0}{1} \to \Prob(\Omega)$ joining $\mu$ to $\nu$ and it is given by 
\begin{equation*}
\rho(t) = (\mathrm{Id} - t \nabla \varphi) \# \mu,
\end{equation*} 
where $\varphi$ is the unique Kantorovitch potential for the transport from $\mu$ to $\nu$. 

We will need to define functionals of the form $\mu \in \Prob(\Omega) \mapsto \int_\Omega h(\mu) \ddr \Leb$. To this extent, we rely on the following proposition (see \cite[Chapter 7]{SantambrogioOTAM}; see also \cite{Bouchitte1990} for the most advanced results on the semicontinuity of this kind of functionals on measures)

\begin{prop}
\label{proposition_continuity_functionals}
Let $h : \intervallefo{0}{+ \infty} \to \R$ be a convex function bounded from below. Let $h'(+ \infty) \in \intervalleof{- \infty}{+ \infty}$ be the limit of $h'(t)$ as $t \to + \infty$. Then, the functional
\begin{equation}\label{defi functional h}
\rho \in \Prob(\Omega) \mapsto \int_\Omega h(\rho^{ac}) + h'(+ \infty) \rho^{sing}(\Omega),   
\end{equation} 
(where $\rho =: \rho^{ac} \Leb + \rho^{sing} $ is the decomposition of $\rho$ as an absolutely continuous part $\rho^{ac} \Leb$ and a singular part $\rho^{sing}$ w.r.t. $\Leb$) is convex and l.s.c.
\end{prop}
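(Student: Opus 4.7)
Convexity is the easy half. The Lebesgue decomposition is linear in $\rho$: for $\rho_0, \rho_1 \in \Prob(\Omega)$ and $\theta \in \intervalleff{0}{1}$, one has $(\theta \rho_0 + (1-\theta) \rho_1)^{ac} = \theta \rho_0^{ac} + (1-\theta) \rho_1^{ac}$ and the analogous identity for the singular parts (they are jointly concentrated on a single $\Leb$-null set). Hence $\rho \mapsto h'(+\infty)\, \rho^{sing}(\Omega)$ is actually \emph{linear}, while $\rho \mapsto \int_\Omega h(\rho^{ac}) \ddr \Leb$ is convex by pointwise convexity of $h$ combined with integration; so the whole functional is convex on $\Prob(\Omega)$.

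For lower semicontinuity my plan is convex duality: to exhibit $F := \int_\Omega h(\rho^{ac}) \ddr \Leb + h'(+\infty) \rho^{sing}(\Omega)$ as a supremum of weak-$*$ continuous affine functionals on $\Prob(\Omega)$. Since $h$ is convex on $\intervallefo{0}{+\infty}$ and bounded below, it is l.s.c.\ and is the pointwise supremum of its affine minorants. Setting
$$D := \left\{(a, b) \in \R^2 : as + b \leq h(s) \text{ for all } s \geq 0 \right\},$$
we have $h(t) = \sup\{at + b : (a,b) \in D\}$ for every $t \geq 0$, and letting $s \to +\infty$ in the defining inequality forces $a \leq h'(+\infty)$ whenever $(a,b) \in D$. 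The key identity to establish is then
$$F(\rho) = \sup \left\{ \int_\Omega \varphi \ddr \rho + \int_\Omega \psi \ddr \Leb : \varphi, \psi \in C(\Omega),\ (\varphi(x), \psi(x)) \in D\ \forall x \in \Omega \right\}.$$
Each functional inside the supremum is weak-$*$ continuous on $\Prob(\Omega)$ (since $\varphi \in C(\Omega)$ and $\int \psi \ddr \Leb$ is a constant), so once this identity is proved both convexity and weak-$*$ lower semicontinuity follow automatically.

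The inequality $\geq$ is immediate: splitting $\int \varphi \ddr \rho = \int \varphi \rho^{ac} \ddr \Leb + \int \varphi \ddr \rho^{sing}$, the constraint $(\varphi(x), \psi(x)) \in D$ yields $\varphi(x) \rho^{ac}(x) + \psi(x) \leq h(\rho^{ac}(x))$ pointwise (controlling the absolutely continuous contribution), while $\varphi(x) \leq h'(+\infty)$ pointwise controls the singular contribution. For the reverse inequality $\leq$, the strategy is to approximate $\rho^{ac}$ by a simple function with finitely many levels $c_1, \dots, c_N$ on a Borel partition $(E_i)$ of $\Omega$, and on each piece $E_i$ choose a supporting pair $(a_i, b_i) \in D$ with $a_i c_i + b_i = h(c_i)$ (such a pair exists by convexity of $h$, with the usual limiting adjustment when $c_i = 0$ or $c_i \to +\infty$). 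The piecewise-constant candidates $\varphi = \sum a_i \1_{E_i}$, $\psi = \sum b_i \1_{E_i}$ almost realize $F(\rho)$; one must then promote them to \emph{continuous} functions by a Lusin/Tietze extension plus a mild regularization. The main obstacle I anticipate is precisely this last step: preserving the pointwise constraint $(\varphi(x), \psi(x)) \in D$ uniformly in $x$ while keeping the integral $\int \varphi \ddr \rho + \int \psi \ddr \Leb$ close to $F(\rho)$ — in particular the bound $\varphi \leq h'(+\infty)$, which drives the singular contribution, must survive the smoothing. As this representation theorem is exactly the content of the results cited from \cite[Chapter 7]{SantambrogioOTAM} and, in greater generality, from \cite{Bouchitte1990}, the plan is to invoke those references rather than redo the technical details here.
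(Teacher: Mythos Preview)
Your proposal is correct and aligns with the paper's treatment: the paper does not prove this proposition at all but simply cites \cite[Chapter~7]{SantambrogioOTAM} and \cite{Bouchitte1990}, exactly as you do at the end. Your additional sketch of the convexity argument and of the duality representation for lower semicontinuity is accurate and matches the standard proof in those references.
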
 

\noindent In particular, we will make a strong use of the following functionals. 

\begin{defi}
For any $m \geqslant 1$, we define $u_m : \intervallefo{0}{+ \infty} \to \R$ for any $t \geqslant 0$ through
\begin{equation*}
u_m(t) := \begin{cases}
t \ln t + 1 & \text{if } m =1 \\
\dst{\frac{t^{m}}{m(m-1)}} & \text{if } m > 1
\end{cases}.
\end{equation*} 
For any $m \geqslant 1$, the functional $U_m : \Prob(\Omega) \to \R$ is defined, for $\rho \in \Prob(\Omega)$, via
\begin{equation*}
U_m(\rho) := \begin{cases}
\dst{\int_\Omega u_m(\rho) } & \text{if } \rho \text{ is absolutely continuous w.r.t. } \Leb \\
+ \infty & \text{else} 
\end{cases}.
\end{equation*}
\end{defi}

\noindent One can notice that $u_m''(t) = t^{m-2}$ for any $m \geqslant 1$ and any $t > 0$, hence the functions $u_m$ are convex for all $m$. One can also notice that $U_1$ is (up to an additive constant) the entropy w.r.t. $\Leb$.  Moreover, some useful properties of $U_m$ are summarized below. 

\begin{prop}
For any $m \geqslant 1$, 
\begin{enumerate}
\item One has $m^2 U_m \geqslant 1$. 
\item The functional $U_m$ is convex and l.s.c. 
\item The functional $U_m$ is geodesically convex: it is convex along every constant-speed geodesic of $(\Prob(\Omega), W_2)$.
\end{enumerate}
\end{prop}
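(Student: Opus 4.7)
The plan is to verify the three assertions in turn; each is essentially a routine consequence of classical material already cited in the excerpt.

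For assertion (1), I would apply Jensen's inequality on the probability space $(\Omega, \Leb)$. In the case $m > 1$, this gives
\[
m^2 U_m(\rho) = \frac{m}{m-1} \int_\Omega \rho^m \ddr \Leb \geqslant \frac{m}{m-1} \left( \int_\Omega \rho \ddr \Leb \right)^m = \frac{m}{m-1} \geqslant 1,
\]
since $\rho$ has total mass $1$ and $t \mapsto t^m$ is convex. For $m = 1$, Jensen applied to the convex $t \mapsto t \ln t$ yields $\int_\Omega \rho \ln \rho \ddr \Leb \geqslant 0$, hence $U_1(\rho) \geqslant 1$.

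For assertion (2), I would apply Proposition 2.2 with $h = u_m$. A direct check shows that $u_m$ is convex (indeed $u_m''(t) = t^{m-2} \geqslant 0$) and bounded from below ($u_m \geqslant 0$ for $m > 1$; $u_1$ attains its minimum $1 - e^{-1} > 0$ at $t = 1/e$). Moreover $u_m'(+\infty) = + \infty$ in all cases, so the functional provided by Proposition 2.2 returns $+\infty$ as soon as $\rho^{sing}$ is nontrivial, which matches our definition of $U_m$. Convexity and lower semicontinuity of $U_m$ then follow directly from Proposition 2.2.

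For assertion (3), I would invoke McCann's displacement convexity criterion: the functional $\rho \mapsto \int_\Omega h(\rho) \ddr \Leb$ is geodesically convex on $(\Prob(\Omega), W_2)$ whenever $s \mapsto s^d h(s^{-d})$ is convex and non-increasing on $(0, +\infty)$. For $m > 1$ one computes
\[
s^d u_m(s^{-d}) = \frac{s^{d(1-m)}}{m(m-1)},
\]
which is a strictly negative power of $s$ and therefore convex and non-increasing. For $m = 1$, the additive constant $+1$ in the definition of $u_1$ contributes only the constant $\Leb(\Omega) = 1$ to $U_1$, which is irrelevant for geodesic convexity; one then reduces to the standard entropy $t \mapsto t \ln t$, whose displacement convexity is McCann's original result. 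No step presents a genuine obstacle; the only delicate point is matching the paper's normalizations (the factor $1/(m(m-1))$ and the additive $+1$ in $u_1$) with the statements of the classical theorems cited.
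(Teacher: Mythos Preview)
Your proposal is correct and follows exactly the same approach as the paper: Jensen's inequality for (1), Proposition~2.2 for (2), and McCann's displacement convexity criterion for (3). The paper's own proof is a terse three-line sketch, while you have carefully spelled out the verifications (including the handling of the additive constant in $u_1$), but the underlying ideas are identical.
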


\begin{proof}
The first point derives from Jensen's inequality. The second point is an application of Proposition \ref{proposition_continuity_functionals}. To prove the third point, recall that $\Omega$ is convex: thus it is enough to check that the functions $u_m$ satisfy McCann's conditions (see \cite{Mccann1997} or \cite[Theorem 5.15]{Villani2003}), which is the case. 
\end{proof}

\subsection{Absolutely continuous curves in the Wasserstein space}\label{sec2.2}

We will denote by $\Gamma$ the space of continuous curves from $\intervalleff{0}{T}$ to $\Prob(\Omega)$.  This space will be equipped with the distance $d$ of the uniform convergence, i.e.
\begin{equation*}
d(\rho^1, \rho^2) := \max_{t \in \intervalleff{0}{T}} W_2( \rho^1(t), \rho^2(t) ).
\end{equation*} 

Following \cite[Definition 1.1.1]{Ambrosio2005}, we will use the following definition. 

\begin{defi}
We say that a curve $\rho \in \Gamma$ is $2$-absolutely continuous if there exists a function $\lambda \in L^2(\intervalleff{0}{T})$ such that, for every $0 \leqslant t \leqslant s \leqslant T $,  
\begin{equation*}
W_2(\rho_t, \rho_s) \leqslant \int_t^s \lambda(r) \ddr r.
\end{equation*}
\end{defi}

\noindent The main interest of this notion lies in the following theorem that we recall. 

\begin{theo}
If $\rho \in \Gamma$ is a $2$-absolutely continuous curve, then the quantity 
\begin{equation*}
|\dot{\rho}_t| := \lim_{h \to 0} \frac{W_2(\rho_{t+h}, \rho_t)}{h}
\end{equation*}
exists and is finite for a.e. $t$. Moreover, 
\begin{equation}
\label{equation_representation_A_sup}
\int_0^T |\dot{\rho}_t|^2 \ddr t = \sup_{N \geqslant 2} \ \ \sup_{0 \leqslant t_1 < t_2 < \ldots < t_N \leqslant T} \ \ \sum_{k=2}^{N} \frac{W_2^2(\rho_{t_{k-1}}, \rho_{t_k})}{t_k - t_{k-1}}.
\end{equation}
\end{theo}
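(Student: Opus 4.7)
The plan is to follow the standard Ambrosio--Gigli--Savar\'e argument (\cite[Theorem 1.1.2]{Ambrosio2005}), which is purely metric in nature and exploits only that $(\Prob(\Omega), W_2)$ is a compact (hence complete separable) metric space. I would split the work into two parts matching the two assertions.

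For the almost-everywhere existence of $|\dot\rho_t|$, I first set $\Lambda(t) := \int_0^t \lambda(r) \ddr r$, so that the 2-absolute continuity hypothesis reads $W_2(\rho_s, \rho_t) \leqslant \Lambda(t) - \Lambda(s)$ for $s \leqslant t$. Since $(\Prob(\Omega), W_2)$ is separable, I pick a countable dense subset $\{\sigma_n\}_{n \in \N}$ and consider the real-valued functions $\phi_n(t) := W_2(\rho_t, \sigma_n)$. The reverse triangle inequality gives $|\phi_n(t) - \phi_n(s)| \leqslant W_2(\rho_t, \rho_s) \leqslant \Lambda(t) - \Lambda(s)$, so each $\phi_n$ is absolutely continuous with $|\phi_n'| \leqslant \lambda$ a.e. Define $m^*(t) := \sup_n |\phi_n'(t)|$ on the full-measure set where all the $\phi_n'$ exist; then $m^* \in L^2(\intervalleff{0}{T})$. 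The key intermediate step is the refined bound $W_2(\rho_s, \rho_t) \leqslant \int_s^t m^*(r) \ddr r$, which I would prove by selecting a subsequence $\sigma_{n_k} \to \rho_s$ so that $\phi_{n_k}(s) \to 0$ and $\phi_{n_k}(t) \to W_2(\rho_s, \rho_t)$, then passing to the limit in $|\phi_{n_k}(t) - \phi_{n_k}(s)| \leqslant \int_s^t m^*$. Combined with the pointwise lower bound $W_2(\rho_{t+h}, \rho_t) \geqslant |\phi_n(t+h) - \phi_n(t)|$ valid for each $n$, this forces $W_2(\rho_{t+h}, \rho_t)/|h| \to m^*(t)$ at every Lebesgue point of $m^*$ that is simultaneously a differentiability point of all $\phi_n$, hence a.e.

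For the representation formula \eqref{equation_representation_A_sup}, the bound of the supremum by the integral follows by combining $W_2(\rho_{t_{k-1}}, \rho_{t_k}) \leqslant \int_{t_{k-1}}^{t_k} |\dot\rho_r| \ddr r$ (a consequence of $|\dot\rho| = m^*$ together with the integral estimate above) with Cauchy--Schwarz, yielding $W_2^2(\rho_{t_{k-1}}, \rho_{t_k})/(t_k - t_{k-1}) \leqslant \int_{t_{k-1}}^{t_k} |\dot\rho_r|^2 \ddr r$, and then summing in $k$. For the reverse inequality, I would take a sequence of refining partitions $\pi_n$ of $\intervalleff{0}{T}$ with mesh tending to $0$ and introduce
\begin{equation*}
f_n(t) := \frac{W_2(\rho_{t_{k-1}}, \rho_{t_k})}{t_k - t_{k-1}} \quad \text{for } t \in \intervalleoo{t_{k-1}}{t_k},
\end{equation*}
so that $\int_0^T f_n^2 \ddr t$ coincides with the partition sum. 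The squeeze $|\phi_n(t_k) - \phi_n(t_{k-1})|/(t_k - t_{k-1}) \leqslant f_n(t) \leqslant (t_k - t_{k-1})^{-1} \int_{t_{k-1}}^{t_k} m^*$ shows that $f_n(t) \to |\dot\rho_t|$ at every Lebesgue point of $m^*$, and Fatou's lemma then yields $\int_0^T |\dot\rho_t|^2 \ddr t \leqslant \liminf_n \int_0^T f_n^2 \ddr t$, which is dominated by the supremum over partitions.

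The main obstacle is the identification $|\dot\rho_t| = m^*(t)$: one must transfer regularity from the countable collection of scalar Lipschitz functions $\phi_n$ back to the curve $\rho$ itself, and this is where both the density of $\{\sigma_n\}$ in $\Prob(\Omega)$ and the triangle inequality for $W_2$ play an essential role. Once this is settled, the remaining tools---Lebesgue's differentiation theorem, Cauchy--Schwarz, and Fatou's lemma---fit together routinely.
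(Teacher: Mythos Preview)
Your argument is correct and is precisely the standard Ambrosio--Gigli--Savar\'e proof that the paper invokes by citation: the paper simply refers to \cite[Theorem 1.1.2]{Ambrosio2005} for the first assertion and to an adaptation of \cite[Theorem 4.1.6]{Ambrosio2003} for the representation formula, and you have reproduced those arguments in detail. The only cosmetic issue is a clash of indices in your last displayed squeeze (the symbol $n$ denotes both the partition index and the element of the dense set), but the mathematics is sound.
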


\begin{proof}
The first part is just \cite[Theorem 1.1.2]{Ambrosio2005}. The proof of the representation formula \eqref{equation_representation_A_sup} can easily be obtained by adapting the proof of \cite[Theorem 4.1.6]{Ambrosio2003}.  
\end{proof}

The quantity $|\dot{\rho}_t|$ is called the metric derivative of the curve $\rho$ and heuristically corresponds to the norm of the derivative of $\rho$ at time $t$ in the metric space $(\Prob(\Omega), W_2)$. Thus, the quantity $\int_0^T |\dot{\rho}_t|^2 \ddr t$ behaves like a $H^1$ norm. In particular, we have the following.

\begin{prop}
\label{proposition_properties_action}
The function $\rho \in \Gamma \mapsto \int_0^T |\dot{\rho}_t|^2 \ddr t$ is l.s.c., convex, and its sublevel sets are compact. 
\end{prop}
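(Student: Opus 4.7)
The strategy is to exploit the variational representation \eqref{equation_representation_A_sup}, which expresses $A(\rho) := \int_0^T |\dot{\rho}_t|^2 \ddr t$ as a supremum over finite partitions of functionals of the curve evaluated at finitely many times. Once this is done, l.s.c.\ and convexity reduce to checking the corresponding properties for each term in the sum, and compactness follows from Ascoli--Arzelà.

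For lower semicontinuity, I would fix a partition $0 \leqslant t_1 < \ldots < t_N \leqslant T$ and note that the map $\rho \mapsto \sum_{k=2}^N W_2^2(\rho_{t_{k-1}}, \rho_{t_k})/(t_k - t_{k-1})$ is continuous on $(\Gamma, d)$: uniform convergence in $d$ forces $W_2(\rho^n_{t_k}, \rho_{t_k}) \to 0$ for each $k$, and $W_2$ is jointly continuous. Since $A$ is a supremum of continuous functions, it is l.s.c. For convexity, I would rely on the well-known fact that $(\mu,\nu) \mapsto W_2^2(\mu,\nu)$ is jointly convex with respect to linear (not geodesic) interpolation of measures; this is a direct consequence of writing $W_2^2$ as the infimum \eqref{equation_definition_Wasserstein_distance} of a linear functional of the transport plan $\gamma$, together with the fact that the linear interpolation of admissible plans for $(\mu_i,\nu_i)$ is admissible for the interpolated marginals. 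This convexity of $W_2^2$ transfers to each term of the sum in \eqref{equation_representation_A_sup}, and suprema of convex functions are convex.

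For compactness of the sublevel set $\{A \leqslant C\}$, I would first derive uniform $1/2$-Hölder continuity of admissible curves. From \eqref{equation_representation_A_sup} (taking $N = 2$) one gets, for any $0 \leqslant s < t \leqslant T$,
\begin{equation*}
W_2^2(\rho_s, \rho_t) \leqslant (t-s) \cdot A(\rho) \leqslant C(t-s),
\end{equation*}
so admissible curves form an equicontinuous family in $\Gamma$. Since $(\Prob(\Omega), W_2)$ is itself a compact metric space (because $W_2$ metrizes the weak-$*$ topology on $\Prob(\Omega)$ and $\Omega$ is compact), Ascoli--Arzelà applies and yields relative compactness for $d$. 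Combining this with the l.s.c.\ already proven (which ensures that $\{A \leqslant C\}$ is closed in $\Gamma$) gives the desired compactness.

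The only non-routine point is the convexity of $W_2^2$ under linear interpolation of the arguments; I would simply cite it (e.g.\ from \cite{Villani2003} or \cite{SantambrogioOTAM}) rather than reprove it. Everything else is a direct application of \eqref{equation_representation_A_sup} and standard general topology, so I do not expect any serious obstacle.
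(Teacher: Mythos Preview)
Your proposal is correct and follows essentially the same approach as the paper: both use the representation formula \eqref{equation_representation_A_sup} to reduce l.s.c.\ and convexity to the corresponding properties of $W_2^2$ (citing \cite{SantambrogioOTAM}), and both obtain compactness by deriving uniform $1/2$-H\"older continuity from \eqref{equation_representation_A_sup}, applying Ascoli--Arzel\`a, and closing via l.s.c. Your write-up is slightly more detailed on intermediate steps, but there is no substantive difference.
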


\begin{proof}
The lower semi-continuity and convexity are a consequence of the representation formula \eqref{equation_representation_A_sup} (because the square of the Wasserstein distance is a continuous convex function of its two arguments, see \cite[Chapter 7]{SantambrogioOTAM}).

Moreover if $\rho \in \Gamma$ is a curve with finite action and $s < t$, then, again with \eqref{equation_representation_A_sup}, one can see that $W_2(\rho_s, \rho_t) \leqslant \sqrt{\int_0^T |\dot{\rho}_t|^2 \ddr t} \sqrt{t-s}$. This shows that the sublevel sets of $\int_0^T |\dot{\rho}_t|^2 \ddr t$ are uniformly equicontinuous, therefore they are relatively compact thanks to Ascoli-Arzela's theorem. As we know moreover that the sublevel sets are closed (by the lower semi-continuity we just proved), we can conclude that they are compact.
\end{proof}

\subsection{Continuous and discrete problems}\label{sec2.3}

In all the sequel, we will make the following assumptions: 

\begin{enumerate}
\item Recall that $\Omega$ is the closure of an open convex bounded domain with smooth boundary. 
\item We assume that $f : \intervallefo{0}{+ \infty} \to \R$ is a strictly convex function, bounded from below and $C^2$ on $\intervalleoo{0}{+ \infty}$. We define the congestion penalization $F$ by, for any $\rho \in \Prob(\Omega)$, 
\begin{equation*}
F(\rho) := \int_\Omega f(\rho^{ac}) + f'(+ \infty) \rho^{sing}(\Omega),
\end{equation*} 
where $\rho =: \rho^{ac} \Leb + \rho^{sing} $ is the decomposition of $\rho$ as an absolutely continuous part $\rho^{ac}$ (identified with its density) and a singular part $\rho^{sing}$ w.r.t. $\Leb$. Thanks to Proposition \ref{proposition_continuity_functionals}, we know that $F$ is a convex l.s.c. functional on $\Prob(\Omega)$. 
\item We assume that $V : \Omega \to \R$ is a Lipschitz function. 
\item We assume that $\Psi : \Prob(\Omega) \to \R$ is a l.s.c. and convex functional, bounded from below.
\end{enumerate}

\noindent We will consider variational problems with a running cost of the form $\rho\mapsto E(\rho):=F(\rho) + \int_\Omega V \ddr \rho$, while $\Psi$ will penalize the final density, and the initial one will be prescribed. 

\begin{defi}
We define the the functional $\A : \Gamma \to \R$ by 
\begin{equation*}
\A(\rho) := \int_0^T \frac{1}{2} |\dot{\rho}_t|^2 \ddr t + \int_0^T E(\rho_t) \ddr t + \Psi(\rho_T).
\end{equation*}
We state the continuous problem as 
\begin{equation}
\label{equation_continuous problem}
\tag{ContPb}
\min \{ \A(\rho) \ : \ \rho \in \Gamma,\;\rho_0=\overline{\rho_0} \}.
\end{equation}
A curve $\rho$ that minimizes $\A$ will be called a solution of the continuous problem. 
\end{defi}

\begin{prop}
Let us assume that there exists $\rho \in \Gamma$ with $\rho_0 = \overline{\rho_0}$ such that $\A(\rho) < + \infty$. Then the problem \eqref{equation_continuous problem} admits a unique solution.
\end{prop}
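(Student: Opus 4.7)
The plan is to use the direct method of the calculus of variations. Start with a minimizing sequence $(\rho^n)_n \subset \Gamma$ with $\rho^n_0 = \overline{\rho_0}$ and $\A(\rho^n) \to \inf \A$, which is finite by hypothesis. Since $f$, $V$ and $\Psi$ are all bounded from below, the kinetic part $\int_0^T \tfrac12 |\dot{\rho^n_t}|^2 \ddr t$ is bounded uniformly in $n$. By Proposition \ref{proposition_properties_action}, the sublevel sets of the kinetic action are compact in $(\Gamma, d)$, so up to extraction $\rho^n \to \rho^\star$ for the uniform distance $d$. The constraint $\rho^\star_0 = \overline{\rho_0}$ passes to the limit because $d$-convergence entails $W_2$-convergence of the curves at every time.

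Then I would verify that each term of $\A$ is lower semicontinuous for $d$-convergence. The kinetic term is l.s.c.\ directly by Proposition \ref{proposition_properties_action}; the potential term $\int V \ddr \rho$ is continuous with respect to weak-$*$ convergence since $V$ is continuous; $\Psi$ is l.s.c.\ by assumption, and the endpoint $\rho^n_T \to \rho^\star_T$ weakly-$*$. For the congestion term, Proposition \ref{proposition_continuity_functionals} gives $F$ l.s.c.\ on $\Prob(\Omega)$, and combining with the uniform lower bound on $F$, Fatou's lemma yields l.s.c.\ of $\rho \mapsto \int_0^T F(\rho_t) \ddr t$ under uniform convergence. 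Summing these, $\A(\rho^\star) \leqslant \liminf_n \A(\rho^n) = \inf \A$, so $\rho^\star$ is a minimizer.

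For uniqueness, suppose $\rho^1,\rho^2$ are both minimizers and consider the linear midpoint curve $\rho^{1/2}_t := \tfrac12 (\rho^1_t + \rho^2_t)$, which lies in $\Gamma$ and still satisfies the initial constraint. The functional $\A$ is convex along linear interpolations of curves: the kinetic part by Proposition \ref{proposition_properties_action}, the congestion $F$ by Proposition \ref{proposition_continuity_functionals}, the potential $V$ linearly, and $\Psi$ by hypothesis. Hence $\A(\rho^{1/2}) \leqslant \tfrac12 (\A(\rho^1) + \A(\rho^2)) = \inf \A$, so equality must hold in every piece of this chain of inequalities. In particular $F(\rho^{1/2}_t) = \tfrac12 (F(\rho^1_t) + F(\rho^2_t))$ for a.e.\ $t$, and the strict convexity of $f$ forces $(\rho^1_t)^{ac} = (\rho^2_t)^{ac}$ almost everywhere in space and time.

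The main obstacle is closing the uniqueness argument when $f'(+\infty) < + \infty$, so that singular parts are \emph{a priori} admissible: one must argue that the singular components of $\rho^1_t$ and $\rho^2_t$ agree as well. I would exploit the equality in the kinetic term along the convex combination together with the joint convexity of $W_2^2$ to extract rigidity on the full measures (or, alternatively, note that when $f'(+\infty) = +\infty$ the singular parts vanish automatically, and argue by approximation otherwise). Once $\rho^1_t = \rho^2_t$ for a.e.\ $t$, continuity in $W_2$ upgrades this to equality for every $t$, hence $\rho^1 = \rho^2$ in $\Gamma$.
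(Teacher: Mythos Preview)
Your proof is correct and follows essentially the same route as the paper's. For existence, both arguments are identical (direct method, compactness of sublevel sets of the kinetic action, lower semicontinuity of each term). For uniqueness, the paper organizes the argument slightly more directly: rather than deducing equality in the convexity inequality and working backwards to $(\rho^1_t)^{ac}=(\rho^2_t)^{ac}$ a.e., it first uses continuity of the curves in $W_2$ to find an interval $[T_1,T_2]$ on which $\rho^1_t\neq\rho^2_t$ for every $t$, and on that interval invokes strict convexity of $F$ to get $F\bigl(\tfrac12(\rho^1_t+\rho^2_t)\bigr)<\tfrac12\bigl(F(\rho^1_t)+F(\rho^2_t)\bigr)$, hence strict inequality in $\A$ and a contradiction. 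This avoids the ``upgrade from a.e.\ to everywhere'' step you perform at the end.

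As for your worry about singular parts when $f'(+\infty)<+\infty$: the paper does not address this either---it simply asserts strict convexity of $F$, which in the generality of the bare proposition would indeed fail if two minimizers shared the same absolutely continuous part but differed singularly. In the paper's actual applications (Assumptions~\ref{assumption_strong_congestion}, \ref{assumption_strong_congestion-variant}, \ref{assumption_weak_congestion}) one always has $f'(+\infty)=+\infty$, so $F(\rho_t)<+\infty$ forces $\rho_t\ll\Leb$ and the issue evaporates. Your caution here is well placed but does not reflect a defect relative to the paper's own argument.
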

    
\begin{proof}
The functional $\A$ is the sum of l.s.c., convex and bounded functionals. Moreover, as $\A(\rho) \geqslant \int_0^T \frac{1}{2} |\dot{\rho}_t|^2 \ddr t -C$ (where $C$ depends on the lower bounds of $f, V$ and $\Psi$), we see (thanks to Proposition \ref{proposition_properties_action}) that the sublevel sets of $\A$ are compact. The existence of a solution to \eqref{equation_continuous problem} follows from the direct method of calculus of variations. 

To prove uniqueness, we need to prove that $\A$ is strictly convex. If $\rho^1$ and $\rho^2$ are two distinct minimizers of $\A$, we define $\rho : = (\rho^1 + \rho^2)/2$. As $\rho^1$ and $\rho^2$ are distinct, by continuity there exists $T_1 < T_2$ such that $\rho^1_t$ and $\rho^2_t$ differ for every $t \in \intervalleff{T_1}{T_2}$. In particular, for any $t \in \intervalleff{T_1}{T_2}$, by strict convexity of $F$, $F(\rho) < (F(\rho^1) + F(\rho^2))/2$. Thus, 
\begin{equation*}
\int_0^T F(\rho_t) \ddr t < \frac{1}{2} \int_0^T F(\rho^1_t) \ddr t + \frac{1}{2} \int_0^T F(\rho^2_t) \ddr t.
\end{equation*} 
As all the other terms appearing in $\A$ are convex, one concludes that $\A(\rho) <  ( \A(\rho^1) + \A(\rho^2))/2$, which contradicts the optimality of $\rho^1$ and $\rho^2$. 
\end{proof}     

\noindent In order to get the $L^\infty$ bounds, we will consider two different cases (strong and weak congestion), depending on the second derivative of $f$. This allows to quantify how much $F$ penalizes concentrated measures.

\begin{asmp}[strong congestion]
\label{assumption_strong_congestion}
There exists $\alpha \geqslant -1$ and $C_f > 0$ such that $f''(t) \geqslant C_f t^{\alpha}$ for any $t > 0$.
\end{asmp} 
\begin{asmp}[strong congestion-variant]
\label{assumption_strong_congestion-variant}
There exist $\alpha \geqslant -1$, $t_0 > 0$ and $C_f > 0$ such that $f''(t) \geqslant C_f t^{\alpha}$ for any $t \geq t_0$.
\end{asmp} 

\noindent In particular, integrating twice, we see that under either of the above assumptions, for $\rho \in \Prob(\Omega)$ we have $U_{\alpha + 2}(\rho) \leqslant C_f F(\rho) + C$, where $C$ is a constant that depends on $f$ (but not on $\rho$). One can also see that $f'(+ \infty) = + \infty$. The function $u_m$ is the typical example of a function satisfying Assumption \ref{assumption_strong_congestion} with $\alpha = m-2$.  To produce functions satisfying Assumption \ref{assumption_strong_congestion-variant} but not Assumption \ref{assumption_strong_congestion}, think at $f(t)=\sqrt{1 + t^4}$ (if we try to satisfy Assumption \ref{assumption_strong_congestion} we need $\alpha\leq 0$ for large $t$, and $\alpha\geq 2$ for small $t$) or at $f(t)=(t-1)_+^2$ (the difference between these two examples is that in the first case on could choose an aribtrary $t_0>0$, while in the second it is necessary to use $t_0\geq 1$).

\begin{asmp}[weak congestion]
\label{assumption_weak_congestion}
There exist $\alpha < -1$, $t_0 > 0$ and $C_f > 0$ such that $f''(t) \geqslant C_f t^{\alpha}$ for any $t \geqslant t_0$. 
\end{asmp}

\noindent For example, $f(t) := \sqrt{1 + t^2}$ satisfies $f''(t) \geqslant C_f t^{\alpha}$ for $t \geqslant 1$ with $\alpha = - 3$. 

\begin{asmp}[higher regularity of the potential]
\label{assumption_potential}
The potential $V$ is of class $C^{1,1}$ (it is $C^1$ and its gradient is Lipschitz) and $\nabla V \cdot \mathbf{n} \geqslant 0$ on $\partial \Omega$, where $\mathbf n$ is the outward normal to $\Omega$. 
\end{asmp}

\noindent We will see that only Assumption \ref{assumption_strong_congestion}, where we require a control of $f''$ everywhere, allows to deal with Lipschitz potentials, while in general we will need the use of Assumption \ref{assumption_potential}. The condition $\nabla V \cdot \mathbf{n} \geqslant 0$ on $\partial \Omega$ can be interpreted by the fact that the minimum of $V$ is reached in the interior of $\Omega$: it prevents the mass of $\rho$ to concentrate on the boundaries.

\begin{asmp}[final penalization]
\label{assumption_temporal_bc}
The penalization $\Psi$ is of the following form
\begin{equation*}
\Psi( \rho_T) = \begin{cases} 
\dst{\int_\Omega g(\rho_T) + \int_\Omega W \ddr \rho_T} & \text{if } \rho_T \text{ is absolutely continuous w.r.t. } \Leb  \\
+ \infty & \text{if } \rho_T \text{ is singular w.r.t. } \Leb,
\end{cases} 
\end{equation*}   
where $g : \intervallefo{0}{+ \infty} \to \R$ is a convex and superlinear (i.e. $g'(+ \infty) = + \infty$) function, bounded from below, and $W : \Omega \to \R$ is a potential of class $C^{1,1}$ satisfying $\nabla W \cdot \mathbf{n} \geqslant 0$ on $\partial \Omega$.
\end{asmp}


The mains results of this paper can be stated as follows. 

\begin{theo}[strong congestion, interior regularity]
\label{theorem_L_infty_bounds_strong}
Suppose that either Assumption \ref{assumption_strong_congestion} holds or Assumption \ref{assumption_strong_congestion-variant} and \ref{assumption_potential} hold, and that $\A(\rho) < + \infty$ for some $\rho \in \Gamma$ with $\rho_0=\overline{\rho_0}$. Let $\rho$ be the unique solution to \eqref{equation_continuous problem}. Then for any $0 < T_1 < T_2 < T$, the restriction of $\rho$ to $\intervalleff{T_1}{T_2}$ belongs to $L^\infty(\intervalleff{T_1}{T_2} \times \Omega)$.  
\end{theo}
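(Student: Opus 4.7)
The plan is to follow the heuristic sketched in Section 1.1, made rigorous by working at the level of the time-discretization \eqref{timediscr}. First I would establish existence, uniqueness and Euler--Lagrange equations for the discrete minimizer $(\rho^\tau_{k\tau})_{k=0}^N$ of \eqref{timediscr} with $\tau = T/N$, together with the convergence of its piecewise-geodesic interpolation $\rho^\tau \in \Gamma$ to the unique solution $\rho$ of \eqref{equation_continuous problem}, by the direct method and lower semicontinuity of $\A$. All further analytic work is done on $\rho^\tau$, and the $L^\infty$ bound is transferred to the continuous solution by lower semicontinuity of $U_m$ and the polynomial behaviour of the Moser constants.

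The central estimate is obtained by the flow-interchange technique. Each interior $\rho^\tau_{k\tau}$ minimizes $\mu \mapsto \frac{W_2^2(\rho^\tau_{(k-1)\tau},\mu)+W_2^2(\rho^\tau_{(k+1)\tau},\mu)}{2\tau} + \tau E(\mu)$, and I would perturb it by the heat semigroup (the $W_2$-gradient flow of $U_1$): combining the two-sided variations of the Wasserstein terms, the McCann-type identity $\frac{d}{ds}\big|_{s=0^+} F(e^{s\Delta}\mu) = -\int f''(\mu)|\nabla \mu|^2$, and the EVI character of the heat flow on $U_m$, one gets the discrete convexity
\begin{equation*}
\frac{U_m(\rho^\tau_{(k-1)\tau}) + U_m(\rho^\tau_{(k+1)\tau}) - 2 U_m(\rho^\tau_{k\tau})}{\tau^2} + \omega(m)^2 U_m(\rho^\tau_{k\tau}) \geq C(m) \int_\Omega f''(\rho^\tau_{k\tau}) (\rho^\tau_{k\tau})^{m-1} |\nabla \rho^\tau_{k\tau}|^2 \ddr x,
\end{equation*}
where $\omega(m)^2 U_m$ absorbs the lower-order contributions coming from $V$ (vanishing for $V = 0$, and controlled under Assumption \ref{assumption_potential} thanks to the boundary condition $\nabla V \cdot \mathbf{n} \geq 0$ which kills boundary terms in the integration by parts in space; under Assumption \ref{assumption_strong_congestion} alone, the pointwise lower bound on $f''$ holds everywhere and lets one absorb a merely Lipschitz $V$). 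Using the lower bound on $f''$ provided by Assumption \ref{assumption_strong_congestion} or \ref{assumption_strong_congestion-variant} to rewrite the right-hand side as a multiple of $\int_\Omega |\nabla (\rho^\tau_{k\tau})^{(m+1+\alpha)/2}|^2$, and then combining with the Sobolev embedding $H^1(\Omega) \hookrightarrow L^{2d/(d-2)}(\Omega)$ (and absorbing the $L^2$-part into $U_{m+1+\alpha}$), I upgrade the inequality to
\begin{equation*}
\frac{U_m(\rho^\tau_{(k-1)\tau}) + U_m(\rho^\tau_{(k+1)\tau}) - 2 U_m(\rho^\tau_{k\tau})}{\tau^2} + \omega(m)^2 U_m(\rho^\tau_{k\tau}) \geq c(m)\, U_{\beta(m+1+\alpha)}(\rho^\tau_{k\tau})^{1/\beta}
\end{equation*}
for some fixed $\beta \in \intervalleoo{1}{d/(d-2)}$ (any $\beta > 1$ if $d \leq 2$), with $c(m)$ of polynomial behaviour in $m$. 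The assumption $\alpha \geq -1$ guarantees $\beta(m+1+\alpha) \geq \beta m > m$, which is exactly the structural feature needed to run a Moser iteration.

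The next step is to test this inequality against a nonnegative $C^2$ cutoff $\chi$ equal to $1$ on $\intervalleff{T_1 - \varepsilon}{T_2 + \varepsilon}$ and supported in $\intervalleff{T_1 - 2\varepsilon}{T_2 + 2\varepsilon}$, multiply by $\tau$ and sum over $k$; a discrete integration by parts in time shifts the second difference onto $\chi$, and passing $\tau \to 0$ yields
\begin{equation*}
\int_{T_1 - \varepsilon}^{T_2 + \varepsilon} U_{\beta(m+1+\alpha)}(\rho_t)^{1/\beta} \ddr t \leq C(m,\varepsilon) \int_{T_1 - 2 \varepsilon}^{T_2 + 2\varepsilon} U_m(\rho_t) \ddr t,
\end{equation*}
with $C(m,\varepsilon)$ polynomial in $m$ and $\varepsilon^{-1}$. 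To pull the exponent $1/\beta$ outside the time integral on the left-hand side, I invoke the reverse Jensen inequality from the Appendix, which relies precisely on the $\omega$-convexity of $t \mapsto U_{\beta(m+1+\alpha)}(\rho_t)$ obtained by passing the discrete convexity to the continuous limit. Finally, iterating along $m_n := \beta^n m_0$ with $\varepsilon_n := 2^{-n} \varepsilon_0$ and starting from $m_0 = 1$ (for which $\int_0^T U_1(\rho_t) \ddr t < + \infty$, since $\Omega$ has finite volume and $U_1$ is bounded on $\Prob(\Omega)$ up to a constant), the polynomial growth of $C(m_n,\varepsilon_n)$ is overwhelmed by the geometric growth of $m_n$; standard Moser bookkeeping then gives $\limsup_n \bigl(\int_{T_1}^{T_2} U_{m_n}(\rho_t) \ddr t\bigr)^{1/m_n} < +\infty$, which is equivalent to $\rho \in L^\infty(\intervalleff{T_1}{T_2} \times \Omega)$. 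The hardest step will be the reverse Jensen inequality: the linear growth of $\omega(m)$ in $m$ must be tracked carefully through the cutoff manipulations so that it does not spoil the geometric Moser scheme, and the convexity must be transferred correctly from the discrete to the continuous setting; once this is in place, the remaining ingredients are standard variations on $L^m$ Moser iteration.
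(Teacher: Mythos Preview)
Your overall architecture matches the paper's, but there are two genuine gaps.

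\textbf{The flow interchange is mis-specified.} You propose to perturb $\rho^\tau_{k\tau}$ by the heat semigroup, i.e.\ the $W_2$-gradient flow of $U_1$. But the EVI for the heat flow controls only the variation of $U_1$ along Wasserstein geodesics, so what you obtain from the two-sided minimality is
\[
\int f''(\rho^\tau_{k\tau})|\nabla\rho^\tau_{k\tau}|^2 + \int \nabla V\cdot\nabla\rho^\tau_{k\tau}
\;\leq\; \frac{U_1(\rho^\tau_{(k-1)\tau})+U_1(\rho^\tau_{(k+1)\tau})-2U_1(\rho^\tau_{k\tau})}{\tau^2},
\]
which is only the case $m=1$; there is no ``EVI character of the heat flow on $U_m$'' for $m>1$. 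To get the inequality with $U_m$ on the right and the extra factor $(\rho^\tau_{k\tau})^{m-1}$ under the integral you must either perturb by the gradient flow of $U_m$ (the porous medium flow), or do what the paper actually does: add a vanishing entropic penalization $\lambda U_1$ to make each $\rho^\tau_{k\tau}$ Lipschitz and bounded away from $0$, write the pointwise Euler--Lagrange equation, multiply it by $(\rho^\tau_{k\tau})^{m-1}\nabla\rho^\tau_{k\tau}$, and bound the Kantorovich-potential terms via geodesic convexity of $U_m$. You also omit this $\lambda$-regularization, without which the EL equation and the subsequent manipulations are not justified.

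\textbf{The initialization of the Moser scheme is wrong.} You start at $m_0=1$ claiming that $U_1$ is bounded on $\Prob(\Omega)$; it is not (the entropy is only bounded from below). The paper initializes at $m_0=\alpha+2$: integrating $f''(t)\geq C_f t^\alpha$ twice gives $U_{\alpha+2}\leq C_1 F + C_2$, and the uniform bound on the discrete actions $\A^{N,\lambda}(\bar\rho^{N,\lambda})$ then yields $\sup_{N,\lambda}\sum_k \tau\,U_{\alpha+2}(\bar\rho^{N,\lambda}_{k\tau})<\infty$, i.e.\ $L^{\alpha+2}_{0,T}<\infty$. Without this, your iteration has no finite starting point. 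A minor related point: the paper keeps the entire Moser iteration and the reverse Jensen inequality at the discrete level (working with the quantities $L^m_{T_1,T_2}$) and only passes to the continuous limit at the very end by lower semicontinuity; your plan of first letting $\tau\to 0$ and then invoking $\omega$-convexity of $t\mapsto U_{\beta m}(\rho_t)$ would require justifying that convexity for the limit curve, which is exactly what you do not yet know.
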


\begin{theo}[strong congestion, boundary regularity]
\label{theorem_L_infty_bounds_boundary}
Suppose that either Assumption \ref{assumption_strong_congestion} holds or Assumption \ref{assumption_strong_congestion-variant} and \ref{assumption_potential} hold, and that Assumption  \ref{assumption_temporal_bc} holds as well, and that $\A(\rho) < + \infty$ for some $\rho \in \Gamma$ with $\rho_0=\overline{\rho_0}$. Let $\rho$ be the unique solution to \eqref{equation_continuous problem}. Then, for any $0 < T_1 < T$, the restriction of $\rho$ to $\intervalleff{T_1}{T}$ belongs to $L^\infty(\intervalleff{T_1}{T} \times \Omega)$.    
\end{theo}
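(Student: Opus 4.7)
The plan is to combine the time-discretization \eqref{timediscr} with the flow-interchange technique and a Moser-type iteration, essentially following the sketch of Section 1.1 but taking particular care of the boundary at $t = T$, which is the new feature compared to Theorem \ref{theorem_L_infty_bounds_strong}. First, I would minimize the discrete functional
\begin{equation*}
\sum_{k=1}^N \frac{W_2^2(\rho_{k-1}, \rho_k)}{2\tau} + \tau \sum_{k=1}^{N-1} E(\rho_k) + \Psi(\rho_N)
\end{equation*}
over $(\rho_k)$ with $\rho_0 = \overline{\rho_0}$ and $N\tau = T$. Existence of a minimizer $(\rho^\tau_k)$ follows from the direct method. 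By varying a single $\rho_k$, each interior $\rho^\tau_k$ ($0<k<N$) solves a three-term Wasserstein problem, while $\rho^\tau_N$ solves a single-Wasserstein problem whose cost contains $\int g(\rho) + \int W \ddr\rho$ from Assumption \ref{assumption_temporal_bc}. Applying the flow-interchange technique, i.e.\ perturbing $\rho^\tau_k$ along the gradient flow of $U_m$ (a porous-medium equation) and differentiating the associated optimality inequality, I obtain in the interior the discrete second-order inequality
\begin{equation*}
\frac{U_m(\rho^\tau_{k-1}) + U_m(\rho^\tau_{k+1}) - 2 U_m(\rho^\tau_k)}{\tau^2} + \omega(m)^2 U_m(\rho^\tau_k) \geq c(m) \int_\Omega f''(\rho^\tau_k) (\rho^\tau_k)^{m-1} |\nabla \rho^\tau_k|^2,
\end{equation*}
with $\omega(m)=O(m)$ when $V\neq 0$ (and $\omega=0$ under Assumption \ref{assumption_strong_congestion} alone), together with the one-sided Neumann-type bound
\begin{equation*}
\frac{U_m(\rho^\tau_N) - U_m(\rho^\tau_{N-1})}{\tau} \leq b(m)\, U_m(\rho^\tau_N)
\end{equation*}
coming from the last time step. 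The favourable sign of the contribution of $g$ is essential here: by convexity of $g$ it yields a dissipation that is dropped, so $b(m)$ depends only on $W$ (and vanishes if $W\equiv 0$).

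Next, I would run the Moser iteration. Under Assumption \ref{assumption_strong_congestion} (or \ref{assumption_strong_congestion-variant}), the bound $f''(t)\geq C_f t^\alpha$ with $\alpha\geq -1$ allows me to rewrite the RHS as $c/m^2 \cdot |\nabla (\rho^{(m+1+\alpha)/2})|^2$. Combined with the Sobolev embedding $H^1 \hookrightarrow L^{2d/(d-2)}$ applied to $(\rho^\tau_k)^{(m+1+\alpha)/2}$, and using $\alpha+1\geq 0$ so that $\beta(m+1+\alpha)\geq \beta m$, this produces a bound of the form $C(m)\bigl[\Delta^2_\tau U_m(\rho^\tau_k) + \omega^2 U_m(\rho^\tau_k)\bigr] \geq U_{\beta m}(\rho^\tau_k)^{1/\beta}$ up to lower-order terms. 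I would then choose a smooth cutoff $\chi$ equal to $1$ on $\intervalleff{T_1}{T}$ and to $0$ outside $\intervalleff{T_1-2\varepsilon}{T}$, multiply by $\chi$ and sum by parts twice in $k$: the left-end contribution vanishes by the cutoff, whereas the right-end contribution at $k=N$ is precisely absorbed by the Neumann-type bound above. Applying the reverse Jensen inequality established in the Appendix (valid for functions $u\geq 0$ with $u''+\tilde\omega^2 u\geq 0$ in the distributional sense) to $t\mapsto U_{\beta m}(\rho_t)$ yields
\begin{equation*}
\left( \int_{T_1}^T U_{\beta m}(\rho_t)\ddr t \right)^{1/\beta} \leq C(m,\varepsilon) \int_{T_1-2\varepsilon}^T U_m(\rho_t)\ddr t + (\text{lower-order terms}),
\end{equation*}
with $C(m,\varepsilon)$ at most polynomial in $m$ and $\varepsilon^{-1}$.

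Finally I would iterate with $m_n = \beta^n m_0$ and $\varepsilon_n = 2^{-n}\varepsilon_0$. Under Assumption \ref{assumption_strong_congestion} one may start from any $m_0>1$ since $F$ already controls $U_{\alpha+2}$; under Assumption \ref{assumption_strong_congestion-variant}, one must first extract a small $L^{m_0}$ bound from the finiteness of $F$ combined with the Neumann-type estimate, which is where the condition $\nabla V\cdot \mathbf n\geq 0$ is used to avoid boundary concentration. The polynomial growth of $C(m_n,\varepsilon_n)$ then ensures $\limsup_n \bigl(\int_{T_1}^T U_{m_n}(\rho_t)\ddr t\bigr)^{1/m_n} < + \infty$, which is precisely $\|\rho\|_{L^\infty(\intervalleff{T_1}{T}\times\Omega)}$. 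Passing to the limit $\tau\to 0$ uses Proposition \ref{proposition_properties_action}: the piecewise-geodesic interpolations of $(\rho^\tau_k)$ are a minimizing sequence for $\A$, hence (by uniqueness of the continuous minimizer) they converge uniformly in $W_2$ to $\rho$, and the uniform-in-$\tau$ $L^\infty$ bound survives in the weak-$*$ limit. The hardest step is keeping the constants in the reverse Jensen inequality polynomial in $m$: in presence of a potential, $t\mapsto U_m(\rho_t)$ is not convex but only satisfies $U_m''+\omega^2 U_m\geq 0$ with $\omega=O(m)$, and ensuring that the induced bound on $\intervalleff{T_1}{T}$ by its $\varepsilon$-neighbourhood on the left does not blow up exponentially with $m_n$ is the delicate combinatorial core on which the whole iteration hinges.
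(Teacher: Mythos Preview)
Your proposal follows essentially the same architecture as the paper's proof: time-discretization, interior and boundary flow-interchange, Moser iteration with a one-sided cutoff combined with the reverse Jensen inequality (Lemmas \ref{lemma_reverse_Jensen} and \ref{lemma_reverse_Jensen_Neumann}), and passage to the limit via Section 5. That is the right skeleton.

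Two points deserve correction, as they misattribute where the assumptions are actually used. First, you write that $\omega=0$ under Assumption \ref{assumption_strong_congestion} alone; this is not so. Even under Assumption \ref{assumption_strong_congestion} the potential $V$ is present (merely Lipschitz), and the term $\int (\nabla\bar\rho\cdot\nabla V)\bar\rho^{m-1}$ is absorbed by Young's inequality in Corollary \ref{corollary_flow_interchange_strong_congestion}, which produces $\omega^2\sim m^2$. The dichotomy between Assumptions \ref{assumption_strong_congestion} and \ref{assumption_strong_congestion-variant}+\ref{assumption_potential} concerns only \emph{how} the $\nabla V$ term is handled (Cauchy--Schwarz versus integration by parts), not whether $\omega$ vanishes. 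Second, your claim that under Assumption \ref{assumption_strong_congestion-variant} one must ``first extract a small $L^{m_0}$ bound from the finiteness of $F$ combined with the Neumann-type estimate'' conflates this case with the weak-congestion Theorem \ref{theorem_L_infty_bounds_weak}. In both strong-congestion variants one has $\alpha\geq -1$, so $U_{\alpha+2}\leq C_f F + C$ and the iteration is initialized identically from $\A(\bar\rho)<+\infty$ via Corollary \ref{corollary_uniform_bounds}; no use of the Neumann estimate is needed at this stage. The condition $\nabla V\cdot\mathbf n\geq 0$ in Assumption \ref{assumption_potential} enters only in the interior flow-interchange (Corollary \ref{corollary_flow_interchange_strong_congestion-variant}), not in the initialization.

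One technical omission: the paper adds a vanishing entropic penalization $\lambda U_1$ to the discrete functional to guarantee that each $\bar\rho^{N,\lambda}_{k\tau}$ is strictly positive and Lipschitz (Lemma \ref{lemma_positivity_rho}, Proposition \ref{proposition_optimality_conditions}), which is what makes the pointwise optimality condition \eqref{equation_optimality_conditions} and hence the flow-interchange computation rigorous. Without this device, the discrete minimizers need not be smooth enough to differentiate, so you should incorporate it explicitly.
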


\begin{theo}[weak congestion case]
\label{theorem_L_infty_bounds_weak}
Suppose Assumptions \ref{assumption_temporal_bc}, \ref{assumption_weak_congestion} and \ref{assumption_potential} hold and that $\A(\rho) < + \infty$ for some $\rho \in \Gamma$ with $\rho_0=\overline{\rho_0}$. We assume that the prescribed initial measure $\overline{\rho_0}$ satisfies $\overline{\rho_0}\in L^{m_0}$ with $m_0 > d |\alpha + 1|/2$ and $F(\overline{\rho_0})<+\infty$, and that $T$ is small enough (smaller than a constant that depends on $f, g, V, W$ and $\overline{\rho_0}$). Let $\rho$ be the unique solution to \eqref{equation_continuous problem}. Then $\rho \in L^{m_0}(\intervalleff{0}{T} \times \Omega)$ and for any $0 < T_1 < T$, the restriction of $\rho$ to $\intervalleff{T_1}{T}$ belongs to $L^\infty(\intervalleff{T_1}{T} \times \Omega)$.   
\end{theo}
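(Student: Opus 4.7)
The plan is to apply the flow-interchange/Moser-iteration machinery of the strong-congestion case (Theorems \ref{theorem_L_infty_bounds_strong} and \ref{theorem_L_infty_bounds_boundary}), but to devote extra work to the first step: establishing an a priori $L^{m_0}(\intervalleff{0}{T} \times \Omega)$ bound on $\rho$. This is precisely where the hypotheses $m_0 > d|\alpha+1|/2$, $F(\overline{\rho_0}) < +\infty$ and $T$ small come in. Once this base integrability is secured, the iteration from the strong case transfers almost verbatim and produces the $L^\infty$ conclusion on $\intervalleff{T_1}{T}$.

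I start by time-discretizing \eqref{equation_continuous problem} via \eqref{timediscr} and applying the flow interchange with the heat flow of $U_m$ at each interior step, as in Section \ref{flow int sec 3}. Under Assumptions \ref{assumption_weak_congestion} and \ref{assumption_potential}, this yields, for every $m \geqslant m_0$, an interior inequality of the schematic form
\begin{equation*}
\frac{U_m(\rho_{(k-1)\tau}) + U_m(\rho_{(k+1)\tau}) - 2 U_m(\rho_{k\tau})}{\tau^2} \geqslant c_m \int_{\{\rho_{k\tau} \geqslant t_0\}} |\nabla (\rho_{k\tau}^{(m+1+\alpha)/2})|^2 \ddr x - \omega_m^2 \, U_m(\rho_{k\tau}) - C_m,
\end{equation*}
with $c_m > 0$ and $\omega_m$ polynomial in $m$. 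The $-\omega_m^2 U_m$ correction comes from $\nabla V$, and the condition $\nabla V \cdot \mathbf{n} \geqslant 0$ from Assumption \ref{assumption_potential} discards the boundary contributions in the integrations by parts. Assumption \ref{assumption_temporal_bc}, combined with $\nabla W \cdot \mathbf{n} \geqslant 0$, produces at the last step a one-sided Neumann-type inequality $(U_m(\rho_{N\tau}) - U_m(\rho_{(N-1)\tau}))/\tau \leqslant b(m) U_m(\rho_{N\tau}) + C_m'$.

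The key new step is the base estimate. Pick $\beta > 1$ close to $d/(d-2)$ so that $\beta(m_0 + 1 + \alpha) > m_0$; the hypothesis $m_0 > d|\alpha + 1|/2$ is exactly what makes this possible. With $m = m_0$, the Sobolev embedding $H^1 \hookrightarrow L^{2\beta}$ applied to $\rho_{k\tau}^{(m_0+1+\alpha)/2}$ on $\{\rho_{k\tau} \geqslant t_0\}$, combined with the trivial bound of the $\{\rho_{k\tau} < t_0\}$-contribution to $U_{m_0}$, converts the gradient source into a lower bound of the form $C\, U_{m_0}(\rho_{k\tau})^{1+\eta} - C'$ for some $\eta > 0$ depending only on $\alpha, m_0, d$. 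Setting $u_k := U_{m_0}(\rho_{k\tau})$, this gives the discrete system
\begin{equation*}
u_{k-1} - 2 u_k + u_{k+1} \geqslant \tau^2 \bigl( C u_k^{1+\eta} - \omega^2 u_k - C' \bigr), \quad u_0 = U_{m_0}(\overline{\rho_0}), \quad u_N - u_{N-1} \leqslant \tau\bigl(b(m_0) u_N + C_{m_0}'\bigr).
\end{equation*}
A discrete bootstrap/continuation argument, modeled on the ODE $\ddot u \geqslant C u^{1+\eta} - \omega^2 u - C'$ with prescribed left endpoint and a one-sided Neumann condition at the right endpoint, then shows that for $T$ smaller than a threshold depending on $u_0, f, g, V, W$, every such $(u_k)$ is bounded by a constant independent of $\tau$. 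Passing to the limit $\tau \to 0$ with Proposition \ref{proposition_continuity_functionals} yields the announced $L^{m_0}(\intervalleff{0}{T} \times \Omega)$ regularity of $\rho$. I expect this step --- executing the discrete comparison uniformly in $\tau$ against the superlinear nonlinearity $u^{1+\eta}$, and pinning down an effective smallness threshold on $T$ that survives the limit --- to be the technical core of the proof.

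The remainder proceeds as in the strong case (the argument of Section 4). Define $m_{n+1} := \beta(m_n + 1 + \alpha)$; since $m_0 > d|\alpha+1|/2$, the sequence $m_n$ is strictly increasing and diverges. At stage $n$, multiplying the interior inequality by a smooth time cutoff supported in a slight enlargement of $\intervalleff{T_1}{T}$, integrating by parts twice in time, and combining Sobolev with the reverse Jensen inequality proven in the Appendix (which tolerates the $\omega_{m_n}^2 U_{m_n}$ correction) yields a Moser step
\begin{equation*}
\left( \int_{T_1 - \varepsilon_{n+1}}^{T} U_{m_{n+1}}(\rho_t) \ddr t \right)^{1/\beta} \leqslant C(m_n, \varepsilon_n) \int_{T_1 - \varepsilon_n}^{T} U_{m_n}(\rho_t) \ddr t,
\end{equation*}
with $C(m_n, \varepsilon_n)$ polynomial in $m_n$ and $\varepsilon_n^{-1}$. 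The final-time Neumann-type inequality is exactly what lets the domain of integration extend up to $t = T$, as in Theorem \ref{theorem_L_infty_bounds_boundary}. Iterating, and using the polynomial growth of $C(m_n, \varepsilon_n)$, gives $\limsup_n \bigl( \int_{T_1}^{T} U_{m_n}(\rho_t) \ddr t \bigr)^{1/m_n} < + \infty$, i.e. $\rho \in L^\infty(\intervalleff{T_1}{T} \times \Omega)$. The passage from the discrete minimizers to the continuous one follows the $\Gamma$-convergence argument of Section 5.
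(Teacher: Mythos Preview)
Your outline matches the paper's architecture (discretize, flow interchange, Moser iteration, pass to the limit), and the iteration part is fine. The gap is in the ``base estimate'' step, where you claim that Sobolev converts the gradient source into a lower bound of the form $C\,U_{m_0}(\rho_{k\tau})^{1+\eta}-C'$ with $\eta>0$. This inequality is false in the weak-congestion regime. The gradient term controls $\bigl(\int_\Omega \rho^{\beta(m_0+1+\alpha)}\bigr)^{1/\beta}$; on a unit-measure domain, Jensen's inequality (with $\beta(m_0+1+\alpha)>m_0$) gives
\[
\Bigl(\int_\Omega \rho^{\beta(m_0+1+\alpha)}\Bigr)^{1/\beta}\ \geqslant\ \Bigl(\int_\Omega \rho^{m_0}\Bigr)^{(m_0+1+\alpha)/m_0},
\]
and since $\alpha<-1$ the exponent $(m_0+1+\alpha)/m_0$ is \emph{strictly less than} $1$. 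So the best universal lower bound on the source in terms of $U_{m_0}$ is sublinear, not superlinear, and your discrete ODE $\ddot u \geqslant C u^{1+\eta}-\omega^2 u-C'$ with $\eta>0$ simply does not hold. With the correct sublinear power the term is absorbed by $-\omega^2 u$ for large $u$, so the blow-up/continuation mechanism you sketch yields nothing.

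The fix --- and this is exactly what the paper does in Proposition~\ref{proposition_initialisation_weak_congestion} --- is to \emph{discard} the source term entirely at this stage and keep only the first conclusion of Corollary~\ref{corollary_flow_interchange_weak_congestion}, namely the discrete almost-convexity
\[
\frac{u_{k+1}+u_{k-1}-2u_k}{\tau^2}+\omega^2 u_k\ \geqslant\ 0,\qquad \omega^2=Cm_0,
\]
together with the Dirichlet datum $u_0=U_{m_0}(\overline{\rho_0})$ (this is where $\overline{\rho_0}\in L^{m_0}$ enters) and the Neumann-type bound $(u_N-u_{N-1})/\tau\leqslant b\,u_N$ from Proposition~\ref{proposition_flow_interchange_boundary}. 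One then compares $u$ with an explicit cosine sequence $v_k=A\cos(2\omega k\tau+\delta)$ (Lemma~\ref{lemma_reverse_Jensen_boundary} in the Appendix): for $T\leqslant \min\{\pi/(32\omega),\pi/(32b)\}$ there is a unique such $v$ matching both boundary conditions, and a discrete maximum-principle argument gives $u_k\leqslant v_k\leqslant C(a,b,\omega)$ uniformly in $\tau$. This is what produces the effective smallness threshold on $T$ and the $L^{m_0}$ bound. The remainder of your proposal (the Moser iteration with $m_{n+1}=\beta(m_n+1+\alpha)$, the reverse Jensen inequality, the Neumann condition extending the estimate to $t=T$, and the $\Gamma$-convergence passage) is correct and coincides with Propositions~\ref{proposition_one_iteration_moser_like_weak}, \ref{proposition_conclusion_iteration_flow_interchange_weak} and~\ref{proposition_norm_discrete_to_continuous}.
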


The rest of the paper is devoted to the proof of these theorems. In particular, we will always assume in the sequel that there exists $\rho \in \Gamma$ with $\rho_0 = \overline{\rho_0}$ such that $\A(\rho) < + \infty$. In order to prove these theorems, we will introduce a discrete (in time) variational problem that will approximate the continuous one. For this problem, we will be able to show the existence of a unique smooth (in space) solution and write down the optimality conditions. From these optimality conditions, we will be able to derive a \emph{flow interchange} estimate whose iteration will give uniform (in the approximation parameters, and in $p$) $L^p$ estimates. 

\bigskip

Let us introduce the discrete problem here. We will use two approximations parameters: 
\begin{itemize}
\item[•] $N +1 \geqslant 2$ will denote the number of time steps. We will write $\tau := T/N$ for the distance between two time steps. The set $T^N$ will stand for the set of all time steps, namely 
\begin{equation*}
T^N := \left\{ k \tau ; \ k = 0, 1, \ldots, N \right\}. 
\end{equation*}
We set $\Gamma_N := \Prob(\Omega)^{T^N} \simeq \Prob(\Omega)^{N+1}$ (i.e. an element $\rho \in \Gamma_N$ is a $N+1$-uplet $(\rho_0, \rho_\tau, \ldots, \rho_{k \tau}, \ldots, \rho_T)$ of probability measures indexed by $T^N$). A natural discretization of the action of a curve is 
\begin{equation*}
\int_0^T \frac{1}{2} |\dot{\rho}_t|^2 \ddr t \simeq  \sum_{k=1}^N \frac{W_2^2(\rho_{(k-1) \tau}, \rho_{k \tau})}{2 \tau}.
\end{equation*} 
\item[•] We will also add a (vanishing) entropic penalization (recall that $U_1$ denotes the entropy w.r.t. $\Leb$). It will ensure that the solution of the discrete problem is smooth. The penalization will be a discretized version of 
\begin{equation*}
\lambda \int_0^T U_1(\rho_t) \ddr t,
\end{equation*} 
where $\lambda$ is a parameter that will be sent $0$. 
\end{itemize} 

\noindent Let us state formally our problem. We fix $N \geqslant 1$ ($\tau := T/N$) and $\lambda > 0$, and we set $\lambda_N = \lambda$ if Assumption \ref{assumption_temporal_bc} is satisfied, $\lambda_N = 0$ otherwise. We define $\A^{N, \lambda} : \Gamma_N \to \R$ by
\begin{equation*}
\A^{N, \lambda}(\rho) := \sum_{k=1}^N \frac{W_2^2(\rho_{(k-1) \tau}, \rho_{k \tau})}{2 \tau} + \sum_{k=1}^{N-1} \tau \left( E(\rho_{k\tau}) + \lambda U_1(\rho_{k \tau})  \right) + \Psi(\rho_T) + \lambda_N U_1(\rho_T).  
\end{equation*}
This means that in the case of Assumption \ref{assumption_temporal_bc} we penalize $\rho_T$ by $\int_\Omega g(\rho_T) + \lambda U_1(\rho_T) + \int_\Omega W \ddr \rho_T$, while we do not modify the boundary condition otherwise (the reason for not always adding $\lambda U_1(\rho_T)$ lies in the possibility of having a prescribed value for $\rho_T$ with infinite entropy). In all the cases, we enforce strictly $\rho_0 = \overline{\rho_0}$. The discrete minimization problem reads 
\begin{equation}
\label{equation_discrete_problem}
\tag{DiscrPb}
\min \{ \A^{N, \lambda}(\rho) \ : \ \rho \in \Gamma_N, \;\rho_0=\overline{\rho_0}\},
\end{equation}
and a $\rho \in \Gamma_N$ which minimizes $\A^{N, \lambda}$ will be called a solution of \eqref{equation_discrete_problem}. 

\begin{theo}
For any $N \geqslant 1$ and any $\lambda > 0$, the discrete problem \eqref{equation_discrete_problem} admits a solution.
\end{theo}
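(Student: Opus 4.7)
The plan is a textbook application of the direct method of the calculus of variations on the compact space $\Gamma_N = \Prob(\Omega)^{N+1}$. Since $(\Prob(\Omega), W_2)$ is compact by Prokhorov's theorem and $W_2$ metrizes the weak-* topology, the finite product $\Gamma_N$, endowed coordinate-wise with the weak-* topology, is a compact metric space; the constraint set $\{\rho \in \Gamma_N \ : \ \rho_0 = \overline{\rho_0}\}$ is closed, hence compact.

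The key verification is that $\A^{N,\lambda}$ is lower semi-continuous on this set and bounded from below. Each Wasserstein term $W_2^2(\rho_{(k-1)\tau}, \rho_{k\tau})/(2\tau)$ is continuous (because $W_2$ metrizes the weak-* topology), and the linear terms $\rho \mapsto \int_\Omega V \ddr\rho$ (and $\int_\Omega W \ddr\rho$, if present) are continuous since $V$ and $W$ are continuous on the compact set $\Omega$. The functionals $F(\rho_{k\tau})$, $\rho_T \mapsto \int_\Omega g(\rho_T)$ (when Assumption \ref{assumption_temporal_bc} holds), and $U_1(\rho_{k\tau})$ are l.s.c. on $\Prob(\Omega)$ thanks to Proposition \ref{proposition_continuity_functionals}, while $\Psi$ is l.s.c. by assumption. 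For the lower bound, the Wasserstein terms are non-negative, $U_1 \geqslant 1$ (from the proposition $m^2 U_m \geqslant 1$), $F$ and $\Psi$ are bounded from below by assumption, and $\int_\Omega V \ddr\rho_{k\tau} \geqslant -\|V\|_\infty > -\infty$ since $V$ is continuous on $\Omega$.

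With these two ingredients in hand, one takes a minimizing sequence in the admissible set; compactness yields a subsequence converging in $\Gamma_N$ to some $\rho^* $; closedness of the admissible set preserves $\rho^*_0 = \overline{\rho_0}$; and l.s.c. gives $\A^{N, \lambda}(\rho^*) \leqslant \liminf_n \A^{N, \lambda}(\rho^n) = \inf \A^{N, \lambda}$, so that $\rho^*$ is a solution. If the infimum happens to be $+\infty$, any admissible element is a solution vacuously; otherwise, finiteness of the infimum is ensured by the standing hypothesis that $\A$ is finite on some continuous curve starting at $\overline{\rho_0}$, whose evaluation at the times $k\tau$ (if needed, slightly perturbed toward $\Leb$ to guarantee finite entropy) yields a discrete competitor with finite $\A^{N,\lambda}$. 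I do not expect any substantive obstacle: all the semi-continuity and lower-bound ingredients were carefully collected in Sections \ref{sec2.2} and \ref{sec2.3}, and the compactness of $\Gamma_N$ is immediate.
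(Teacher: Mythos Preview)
Your proof is correct and follows the same direct-method strategy as the paper: compactness of $\Gamma_N$, lower semi-continuity and boundedness-from-below of $\A^{N,\lambda}$. The only difference is in the finiteness step: rather than sampling a continuous curve (which, as you note, needs a perturbation to guarantee $U_1(\rho_{k\tau})<+\infty$ at the specific sampled times), the paper simply takes $\rho_{k\tau}=\Leb$ for $1\le k\le N-1$ and any $\rho_T$ with $\Psi(\rho_T)+\lambda_N U_1(\rho_T)<+\infty$, which gives a finite-valued competitor with no work.
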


\begin{proof}
The functional $\A^{N, \lambda}$ is a sum of convex and l.s.c. functionals, bounded from below, hence it is itself convex, l.s.c. and bounded from below. Moreover, the space $\Gamma_N = \Prob(\Omega)^{N+1}$ is compact (for the weak-* convergence). Thus, to use the direct method of calculus of variations, it is enough to show that $A^{N, \lambda}(\rho) < + \infty$ for some $\rho \in \Gamma_N$. 

This is easy in this discrete framework: just take $\rho_{k \tau} = \Leb$ if $k \in \{ 1,2,\ldots, N-1 \}$, $\rho_0=\overline{\rho_0}$ and $\rho_{N\tau}$ equal to an arbitrary measure $\rho$ such that $\Psi(\rho)+\lambda_N U_1(\rho) < + \infty$. \end{proof}

\begin{remark}
We did not adress the uniqueness of the minimizer in the above problem since we do not really care about it, but indeed it also holds. Indeed, the strict convexity of $F$ (or the term $\lambda U_1$ that we added) guarantees uniqueness of $\rho_{k\tau}$ for all $k\leqslant N-1$. The uniqueness of the last measure (which cannot be deducted from strict convexity for an arbitrary functional $\Psi$, as we do not always add a term of the form $\lambda U_1(\rho_T)$) can be obtained from the strict convexity of the last Wasserstein distance term $\rho\mapsto W_2^2(\rho,\rho_{(N-1)\tau})$, as $\rho_{(N-1)\tau}$ is absolutely continuous (see \cite[Proposition 7.19]{SantambrogioOTAM}). 
\end{remark}

In all the following, for any $N \geqslant 1$ and $\lambda > 0$, we denote by $\bar{\rho}^{N, \lambda} \in \Gamma_N$ the unique solution of \eqref{equation_discrete_problem} with parameters $N$ and $\lambda$. Moreover, In all the sequel, we fix $1 < \beta < d/(d-2)$. It is well known that the space $H^1(\Omega)$ is continuously embedded into $L^{2 \beta}(\Omega)$. Moreover, in the case where the assumptions of Theorem \ref{theorem_L_infty_bounds_weak} are satisfied, we choose $\beta$ in such a way that 
\begin{equation}
\label{equation_relation_beta_m0}
\frac{\beta}{\beta - 1} m_0 >  |\alpha + 1|.
\end{equation}

\section{Flow interchange estimate}\label{flow int sec 3}

\subsection{Interior flow interchange}

In this subsection, we study the optimality conditions of \eqref{equation_discrete_problem} away from  the temporal boundaries. We fix for the rest of the subsection $N \geqslant 1$, $0 < \lambda \leqslant 1$ and $0 < k < N$, and we use the shortcut $\bar{\rho} := \bar{\rho}^{N, \lambda}_{k \tau}$. Let us also denote $\mu := \bar{\rho}^{N, \lambda}_{(k-1) \tau}$ and $\nu := \bar{\rho}^{N, \lambda}_{(k+1) \tau}$. As $\bar{\rho}^{N, \lambda}$ is a solution of the discrete problem, we know that $\bar{\rho}$ is a minimizer (among all probability measures) of 
\begin{equation*}
\rho \mapsto \frac{W_2^2(\mu, \rho) + W_2^2(\rho, \nu)}{2 \tau} + \tau \left( F(\rho) + \lambda U_1(\rho) + \int_\Omega V \ddr \rho \right).
\end{equation*} 
In particular, we know that $U_1(\bar{\rho}) < + \infty$, thus $\bar{\rho}$ is absolutely continuous w.r.t. $\Leb$.

\begin{lm}
\label{lemma_positivity_rho}
The density $\bar{\rho}$ is strictly positive a.e.
\end{lm}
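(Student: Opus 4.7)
The plan is a perturbation/contradiction argument exploiting the superlinearity of the entropy $U_1$ near $0$. Since $\bar{\rho}$ minimizes a functional containing the term $\tau\lambda U_1(\cdot)$ with $\tau\lambda > 0$, placing a small amount of mass on a hypothetical null set of $\bar{\rho}$ should be extremely cheap for the entropy (whose ``derivative'' there is $-\infty$) and only linearly costly for the remaining terms, which will yield a contradiction.

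Concretely, I would argue by contradiction: suppose that the set $A := \{x \in \Omega \, : \, \bar{\rho}(x) = 0\}$ has positive Lebesgue measure (this is well-defined since the already-established finiteness $U_1(\bar{\rho})<+\infty$ gives absolute continuity), and introduce the competitor
\[ \rho_\varepsilon := (1-\varepsilon)\bar{\rho} + \varepsilon \tilde{\rho}, \qquad \tilde{\rho} := |A|^{-1}\mathbbm{1}_A \Leb, \qquad \varepsilon \in (0,1). \]
Each of the Wasserstein, congestion and potential terms in the objective is either convex or linear in its measure argument, and $\tilde{\rho} \in L^\infty(\Omega)$ makes $W_2^2(\tilde{\rho}, \mu)$, $W_2^2(\tilde{\rho}, \nu)$, $F(\tilde{\rho})$ and $\int_\Omega V \ddr\tilde{\rho}$ all finite. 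Convexity (or linearity for the potential term) then yields one-sided upper bounds: the change of each of these four terms between $\bar{\rho}$ and $\rho_\varepsilon$ is at most $C \varepsilon$ for some constant $C$ independent of $\varepsilon$. For the entropy, splitting $\Omega = A \sqcup (\Omega \setminus A)$ and using $\rho_\varepsilon = \varepsilon/|A|$ on $A$ and $\rho_\varepsilon = (1-\varepsilon)\bar{\rho}$ on $\Omega \setminus A$, a direct calculation gives
\[ U_1(\rho_\varepsilon) - U_1(\bar{\rho}) = \varepsilon \ln \varepsilon + O(\varepsilon) \qquad \text{as } \varepsilon \to 0^+, \]
where the leading $\varepsilon \ln \varepsilon$ comes from $\int_A u_1(\varepsilon/|A|) \ddr x = \varepsilon \ln(\varepsilon/|A|)$ and the $O(\varepsilon)$ remainder uses the finiteness of $U_1(\bar{\rho})$. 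Summing, the change in the full objective is bounded above by $\tau\lambda\, \varepsilon \ln \varepsilon + C'\varepsilon$, which is strictly negative for $\varepsilon$ small enough, contradicting the minimality of $\bar{\rho}$.

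The main (mild) obstacle is that Proposition \ref{proposition_first_variation_W2} is not directly applicable here: it would give the first-order expansion of the Wasserstein terms along $\varepsilon \mapsto \rho_\varepsilon$, but its hypothesis requires $\bar{\rho}$ itself to be strictly positive a.e., which is exactly the conclusion we are trying to prove. This forces the argument to use one-sided convexity upper bounds on the Wasserstein and congestion terms rather than an explicit first variation via a Kantorovich potential, but this is sufficient because the entropy contribution $\varepsilon \ln \varepsilon$ dominates any linear $C\varepsilon$ as $\varepsilon \to 0^+$.
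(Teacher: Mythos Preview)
Your proof is correct and follows essentially the same approach as the paper: perturb $\bar\rho$ by a convex combination with an $L^\infty$ competitor, bound the Wasserstein, congestion and potential terms linearly in $\varepsilon$ via convexity, and let the entropy term (with its $\varepsilon\ln\varepsilon$ behavior) produce the contradiction. The only cosmetic differences are that the paper uses $\tilde\rho=\Leb$ rather than $|A|^{-1}\1_A\Leb$, and outsources the final entropy computation to \cite[Lemma 8.6]{SantambrogioOTAM} instead of carrying it out explicitly as you do; your observation that Proposition~\ref{proposition_first_variation_W2} cannot be invoked here and that convexity suffices is exactly right.
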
 

\begin{proof}
For $0 < \varepsilon < 1$, we define $\rho_\varepsilon := (1 - \varepsilon) \bar{\rho} + \varepsilon \Leb$. As $\Leb$ is a probability measure, we know that $\rho_\varepsilon$ is a probability measure too. Thus, using $\rho_\varepsilon$ as a competitor, we get
$$
\lambda (U_1(\bar{\rho}) - U_1(\rho_\varepsilon)) \leqslant \frac{W_2^2(\mu, \rho_\varepsilon) + W_2^2(\rho_\varepsilon, \nu)}{2 \tau} + \tau E(\rho_\varepsilon) - \frac{W_2^2(\mu, \bar{\rho}) + W_2^2(\bar{\rho}, \nu)}{2 \tau} - \tau E(\bar{\rho}).
$$
We estimate the r.h.s. by convexity (as $W_2^2$ and $F$ are convex) to see that 
$$
U_1(\bar{\rho}) - U_1(\rho_\varepsilon) \leqslant \frac{\varepsilon}{\lambda} \Bigg( \frac{W_2^2(\mu, \Leb) + W_2^2(\Leb, \nu)}{2 \tau} + \tau E(\Leb) - \frac{W_2^2(\mu, \bar{\rho}) + W_2^2(\bar{\rho}, \nu)}{2 \tau} - \tau E(\bar{\rho}) \Bigg).  
$$
Thus, there exists a constant $C$, independent of $\varepsilon$, such that $U_1(\bar{\rho}) - U_1(\rho_\varepsilon)  \leqslant C \varepsilon$. This can be easily seen to imply (see for instance the proof of \cite[Lemma 8.6]{SantambrogioOTAM}) that $\bar{\rho}$ is strictly positive a.e. 
\end{proof}

\noindent We can then write the first-order optimality conditions. 

\begin{prop}
\label{proposition_optimality_conditions}
The measure $\bar{\rho}$ (or more precisely its density w.r.t. $\Leb$) is Lipschitz and bounded away from $0$ and $\infty$. Moreover, let us denote by $\varphi_\mu$ and $\varphi_\nu$ the Kantorovitch potentials for the transport from $\bar{\rho}$ to respectively $\mu$ and $\nu$. Then the following identity holds a.e.:
\begin{equation}
\label{equation_optimality_conditions}
\frac{\nabla \varphi_\mu + \nabla \varphi_\nu}{\tau^2} + \left( f''(\bar{\rho}) + \frac{\lambda}{\bar{\rho}} \right) \nabla \bar{\rho}  + \nabla V = 0.
\end{equation}  
\end{prop}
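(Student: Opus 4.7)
My plan is to derive the pointwise Euler--Lagrange equation first, and only then extract regularity of $\bar\rho$ from it by inverting a strictly monotone function.

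The starting point is that $U_1(\bar\rho)<+\infty$, so $\bar\rho$ is absolutely continuous with respect to $\Leb$, and by Lemma \ref{lemma_positivity_rho} its density is strictly positive a.e. Proposition \ref{proposition_first_variation_W2} applies at $\bar\rho$ to both $W_2^2(\mu,\cdot)$ and $W_2^2(\cdot,\nu)$: the Kantorovitch potentials $\varphi_\mu$ and $\varphi_\nu$ exist, are unique and are Lipschitz on $\Omega$.

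For any $\tilde\rho\in\Prob(\Omega)\cap L^\infty(\Omega)$ with $F(\tilde\rho)<+\infty$, I would use the competitor $\rho_\varepsilon:=(1-\varepsilon)\bar\rho+\varepsilon\tilde\rho$. Proposition \ref{proposition_first_variation_W2} computes the derivative of the Wasserstein terms; for $F$, $U_1$ and $\rho\mapsto\int V\ddr\rho$, the convex chord slopes $\varepsilon^{-1}(f(\rho_\varepsilon)-f(\bar\rho))$, $\varepsilon^{-1}(u_1(\rho_\varepsilon)-u_1(\bar\rho))$ are monotone in $\varepsilon$ and dominated by $f(\tilde\rho)-f(\bar\rho)$ and $u_1(\tilde\rho)-u_1(\bar\rho)$ respectively; monotone convergence (combined with the fact that $f'(\bar\rho)$ and $\log\bar\rho$ are subgradients at $\bar\rho$) gives the pointwise limits $f'(\bar\rho)(\tilde\rho-\bar\rho)$ and $(\log\bar\rho+1)(\tilde\rho-\bar\rho)$, which are integrable. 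Writing $\lim_{\varepsilon\to 0^+}\varepsilon^{-1}(\A^{N,\lambda}(\rho_\varepsilon)-\A^{N,\lambda}(\bar\rho))\geq 0$ and using that $\tilde\rho$ is arbitrary (subject to $\int\tilde\rho=1$) yields, by the standard Euler--Lagrange argument with a multiplier for the mass constraint, that there exists a constant $C$ such that
\begin{equation*}
\frac{\varphi_\mu+\varphi_\nu}{\tau}+\tau f'(\bar\rho)+\tau\lambda(\log\bar\rho+1)+\tau V=C\qquad\text{a.e. on }\Omega.
\end{equation*}

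Now I extract the regularity. Define $g:(0,+\infty)\to\R$ by $g(r):=\tau f'(r)+\tau\lambda\log r$. Since $g'(r)=\tau f''(r)+\tau\lambda/r>0$, $g$ is a $C^1$ strictly increasing bijection onto $\R$: indeed, $f'$ is non-decreasing so $g(r)\to-\infty$ as $r\to 0^+$ (driven by $\log r$) and $g(r)\to+\infty$ as $r\to+\infty$. The pointwise EL reads $g(\bar\rho)=H$ with $H:=C-\tau\lambda-(\varphi_\mu+\varphi_\nu)/\tau-\tau V$, which is Lipschitz (hence bounded) on $\Omega$. Consequently $\bar\rho=g^{-1}(H)$ takes values in a compact subset of $(0,+\infty)$, so $\bar\rho$ is bounded above and away from $0$; since $g^{-1}$ is $C^1$ on the (compact) range of $H$, $\bar\rho=g^{-1}\circ H$ is Lipschitz. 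Differentiating $g(\bar\rho)=H$ via the chain rule, now fully justified, gives
\begin{equation*}
\frac{\nabla\varphi_\mu+\nabla\varphi_\nu}{\tau}+\tau\!\left(f''(\bar\rho)+\frac{\lambda}{\bar\rho}\right)\!\nabla\bar\rho+\tau\nabla V=0,
\end{equation*}
and dividing by $\tau$ produces \eqref{equation_optimality_conditions}. The only genuinely delicate point is the first variation of $F$ in the second step: because we do not yet know $\bar\rho\in L^\infty$, one cannot just invoke dominated convergence on $f'(\bar\rho)(\tilde\rho-\bar\rho)$; the monotone chord-slope argument provided by convexity of $f$ (together with $F(\tilde\rho)<+\infty$, ensuring $F(\rho_\varepsilon)<+\infty$) is the right substitute.
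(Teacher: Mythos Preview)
Your proof is correct and follows essentially the same route as the paper: perturb $\bar\rho$ by $\rho_\varepsilon=(1-\varepsilon)\bar\rho+\varepsilon\tilde\rho$, differentiate the Wasserstein terms via Proposition~\ref{proposition_first_variation_W2}, handle the congestion/entropy terms through the monotonicity of convex chord slopes (the paper phrases this as a reverse Fatou lemma), deduce that $\frac{\varphi_\mu+\varphi_\nu}{\tau^2}+f'(\bar\rho)+\lambda\ln\bar\rho+V$ is constant a.e., invert the strictly increasing map $r\mapsto f'(r)+\lambda\ln r$ to obtain boundedness and Lipschitz regularity of $\bar\rho$, and finally take the gradient. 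One minor imprecision: you assert that $f'(\bar\rho)(\tilde\rho-\bar\rho)$ and $(\log\bar\rho)(\tilde\rho-\bar\rho)$ are integrable, but a priori $\log\bar\rho$ and $f'(\bar\rho)$ need not be in $L^1$; what the chord-slope/Fatou argument actually yields is only the one-sided inequality $\limsup_{\varepsilon\to 0}\int(\cdot)\leq\int(\cdot)$, with the right-hand side possibly $-\infty$, which is still enough to run the essential-infimum argument and conclude that $h$ is constant a.e.
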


\begin{proof}
Let $\tilde{\rho} \in \Prob(\Omega) \cap L^\infty(\Omega)$ and for $0 < \varepsilon < 1$ define $\rho_\varepsilon = (1 - \varepsilon) \bar{\rho} + \varepsilon \tilde{\rho}$. We use $\rho_\varepsilon$ as a competitor. We use Proposition \ref{proposition_first_variation_W2}: as $\bar{\rho} > 0$ a.e., the Kantorovitch potentials $\varphi_\mu$ and $\varphi_\nu$ for the transport from $\bar{\rho}$ to respectively $\mu$ and $\nu$ are unique and
\begin{equation*}
\lim_{\varepsilon \to 0} \frac{W_2^2(\mu, \bar{\rho}) - W_2^2(\mu, \rho_\varepsilon) + W_2^2(\bar{\rho}, \nu) - W_2^2(\rho_\varepsilon, \nu)}{2 \tau^2} = \int_\Omega \frac{\varphi_\mu + \varphi_\nu}{\tau} (\bar{\rho} - \tilde{\rho}).
\end{equation*} 
The term involving $V$ is straightforward to handle as it is linear. Hence, by optimality of $\bar{\rho}$ we get
\begin{equation}
\label{equation_derivation_optimality_auxiliary}
\int_\Omega \left( \frac{\varphi_\mu + \varphi_\nu}{\tau^2} + V \right) (\bar{\rho} - \tilde{\rho}) \leqslant \liminf_{\varepsilon \to 0} \frac{F(\rho_\varepsilon) + \lambda U_1(\rho_\varepsilon) - F(\bar{\rho}) - \lambda U_1(\bar{\rho})}{\varepsilon}.
\end{equation}   
\begin{equation*}
\frac{F(\rho_\varepsilon) + \lambda U_1(\rho_\varepsilon) - F(\bar{\rho}) - \lambda U_1(\bar{\rho})}{\varepsilon} = \int_\Omega \frac{f_\lambda[ (1 - \varepsilon) \bar{\rho} + \varepsilon \tilde{\rho} ] - f_\lambda[\bar{\rho}]}{\varepsilon}.
\end{equation*}
The integrand of the integral of the r.h.s. converges pointewisely, as $\varepsilon \to 0$, to $(f'(\bar{\rho}) + \lambda \ln \bar{\rho})(\tilde{\rho} - \bar{\rho})$. Moreover, as the function $f_\lambda$ is convex, we see that for $0 < \varepsilon < 1$, 
\begin{equation*}
\frac{f_\lambda[ (1 - \varepsilon) \tilde{\rho} + \varepsilon \bar{\rho} ] - f_\lambda[\bar{\rho}]}{\varepsilon} \leqslant f_\lambda(\tilde{\rho}) - f_\lambda(\bar{\rho}).
\end{equation*}
As $\tilde{\rho} \in L^\infty(\Omega)$ and $F(\bar{\rho}) + \lambda U_1(\bar{\rho}) < + \infty$, the r.h.s. of the equation is integrable on $\Omega$. Thus, by a reverse Fatou's lemma, 
\begin{equation*}
\limsup_{\varepsilon \to 0} \int_\Omega \frac{F(\rho_\varepsilon) + \lambda U_1(\rho_\varepsilon) - F(\bar{\rho}) - \lambda U_1(\bar{\rho})}{\varepsilon} \leqslant \int_\Omega \left( f'(\bar{\rho}) + \lambda \ln \bar{\rho} \right)(\tilde{\rho} - \bar{\rho}).
\end{equation*} 
Combing this equation with \eqref{equation_derivation_optimality_auxiliary}, we see that $\int_\Omega h \ddr (\tilde{\rho} -  \bar{\rho}) \geqslant 0$ with 
\begin{equation*}
h := \frac{\varphi_\mu + \varphi_\nu}{\tau^2} + f'(\bar{\rho}) + \lambda \ln \bar{\rho} + V. 
\end{equation*}
We know that $h$ is finite a.e., thus its essential infimum cannot be $+ \infty$. Moreover, starting from $\bar{\rho}f'(\bar{\rho}) \geqslant f(\bar{\rho}) - f(0)$, we see that $\int_\Omega h\bar{\rho} > - \infty$. Taking probability measures $\tilde{\rho}$ concentrated on sets where $h$ is close to its essential infimum, we see that the essential infimum of $h$ cannot be $+ \infty$ and that $h$ coincides with its essential infimum $\bar{\rho}$-a.e. As $\bar{\rho} > 0$ a.e., there exists $C$ such that we have a.e. on $\Omega$ 
\begin{equation}
\label{equation_derivation_optimality_integral}
f'(\bar{\rho}) + \lambda \ln \bar{\rho} = C - \frac{\varphi_\mu + \varphi_\nu}{\tau^2} - V. 
\end{equation}
As $f'$ is $C^1$ and increasing, it is easy to see that $f' + \lambda \ln$ is an homeomorphism of $\intervalleoo{0}{+ \infty}$ on $\intervalleoo{- \infty}{+ \infty}$ which is bilipschitz on compact sets. As the function $C - (\varphi_\mu + \varphi_\nu) / \tau^2 - V$ takes its values in a compact set and is Lispchitz, we see that $\bar{\rho}$ is bounded away from $0$ and $\infty$ and is Lipschitz. With all this regularity (recall that $f$ is assumed to be $C^2$ on $\intervalleoo{0}{+ \infty}$), we can take the gradient of \eqref{equation_derivation_optimality_integral} to obtain \eqref{equation_optimality_conditions}.
\end{proof}

\begin{theo}[Flow interchange inequality]
\label{theorem_flow_interchange}
For any $m \geqslant 1$, the following inequality holds: 
\begin{equation*}
\int_\Omega |\nabla \bar{\rho}|^2 f''(\bar{\rho}) \bar{\rho}^{m-1} + \int_\Omega (\nabla \bar{\rho} \cdot \nabla V) \bar{\rho}^{m-1} \leqslant \frac{U_m(\mu) + U_m(\nu) - 2 U_m(\bar{\rho})}{\tau^2}.
\end{equation*}
\end{theo}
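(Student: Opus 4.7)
The plan is to perform a \emph{flow interchange} in the spirit of Matthes--McCann--Savaré: perturb $\bar\rho$ along the Wasserstein gradient flow of $U_m$ and compare. Concretely, let $(S_s)_{s \geq 0}$ denote the semigroup on $\Prob(\Omega)$ generated by the porous medium equation (or heat equation, when $m=1$) $\partial_s \sigma = \tfrac{1}{m}\Delta \sigma^m$ with no-flux boundary conditions; this is exactly the Wasserstein gradient flow of $U_m$. Since Proposition \ref{proposition_optimality_conditions} gives that $\bar\rho$ is Lipschitz, bounded, and bounded away from zero, the curve $\sigma_s := S_s \bar\rho$ is a classical smooth solution for small $s > 0$. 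The idea is then to use $\sigma_s$ as a competitor against $\bar\rho$ in the minimization problem that $\bar\rho$ solves, divide the resulting inequality by $s > 0$, and pass to the limit $s \to 0^+$.

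The four terms in the objective are handled in two different ways. For the two Wasserstein terms, I would invoke the Evolution Variational Inequality for the gradient flow of a geodesically convex functional: since $U_m$ is geodesically convex, one has
$$\limsup_{s \to 0^+} \frac{W_2^2(\sigma, \sigma_s) - W_2^2(\sigma, \bar\rho)}{2s} \;\leq\; U_m(\sigma) - U_m(\bar\rho),$$
applied with $\sigma = \mu$ and $\sigma = \nu$. For the remaining three terms ($F$, $\int V \ddr \rho$, and $\lambda U_1$), one uses the smoothness of $\sigma_s$ to differentiate under the integral sign and integrate by parts against the PDE; using $\nabla u_m'(\bar\rho) = \bar\rho^{m-2}\nabla\bar\rho$, the derivatives at $s=0^+$ evaluate respectively to $-\int f''(\bar\rho)|\nabla\bar\rho|^2 \bar\rho^{m-1}$, $-\int (\nabla V \cdot \nabla\bar\rho)\bar\rho^{m-1}$, and $-\int|\nabla\bar\rho|^2\bar\rho^{m-2}$. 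The entropic contribution is non-positive and may simply be dropped (which only weakens the inequality, and incidentally explains why $\lambda$ does not appear in the statement); after rearrangement and multiplication by $\tau$ one reads off the announced inequality.

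The main technical hurdle is to justify rigorously the differentiability at $s = 0^+$ of each of the four terms along the flow $S_s$. This reduces to (i) a short-time smoothness result for $S_s \bar\rho$ starting from the Lipschitz, strictly positive initial datum $\bar\rho$ on a smooth domain, which is standard parabolic theory, and (ii) the EVI for the Wasserstein gradient flow of a geodesically convex functional, which is classical, see \cite{Ambrosio2005}. The Wasserstein derivative could alternatively be obtained directly by combining the Benamou--Brenier representation of $W_2^2$ with McCann's displacement convexity of $U_m$, bypassing the general EVI theory, but the net conclusion is identical.
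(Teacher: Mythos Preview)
Your argument is correct and follows the original flow-interchange template of Matthes--McCann--Savaré: perturb $\bar\rho$ along the gradient flow of $U_m$ (the porous medium semigroup), compare via the minimality of $\bar\rho$, handle the two Wasserstein terms through the EVI, and differentiate the running-cost terms using the PDE together with an integration by parts. Dropping the entropic dissipation is indeed harmless, and the result follows after dividing by $\tau$.

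The paper takes a different, more self-contained route. Instead of perturbing along the gradient flow of $U_m$, it uses the Euler--Lagrange identity \eqref{equation_optimality_conditions} from Proposition~\ref{proposition_optimality_conditions}: multiplying that pointwise identity by $\bar\rho^{m-1}\nabla\bar\rho$ and integrating immediately produces the $f''$ and $\nabla V$ terms together with the Kantorovich potential terms $-\tau^{-2}\int(\nabla\varphi_\mu+\nabla\varphi_\nu)\cdot\nabla\bar\rho\,\bar\rho^{m-1}$. The latter are then bounded by $U_m(\mu)-U_m(\bar\rho)$ and $U_m(\nu)-U_m(\bar\rho)$ via a direct computation along the Wasserstein \emph{geodesics} from $\bar\rho$ to $\mu$ and to $\nu$, using displacement convexity of $U_m$ and a dominated-convergence argument that exploits the Lipschitz regularity of $\bar\rho$. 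The entropic term is again simply dropped by sign.

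The trade-off is clear: your approach is conceptually cleaner and applies the flow-interchange machinery ``off the shelf'', but relies on two nontrivial black boxes (EVI for the gradient flow of $U_m$, and short-time parabolic smoothing for the porous medium equation from a Lipschitz, bounded-away-from-zero initial datum). The paper's approach is more hands-on but entirely elementary once the optimality condition is available: no parabolic regularity and no abstract EVI are needed, only the explicit McCann interpolation and a one-line convexity argument.
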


\begin{proof}
We multiply pointewisely \eqref{equation_optimality_conditions} by $\bar{\rho}^{m-1} \nabla \bar{\rho}$ and integrate over $\Omega$. Dropping the entropic term, we easily get 
\begin{equation*}
\int_\Omega |\nabla \bar{\rho}|^2 f''(\bar{\rho}) \bar{\rho}^{m-1} + \int_\Omega (\nabla \bar{\rho} \cdot \nabla V) \bar{\rho}^{m-1} \leqslant - \frac{1}{\tau^2} \int_\Omega \left[ \nabla \bar{\rho} \cdot  (\nabla \varphi_\mu + \nabla \varphi_\nu ) \right] \bar{\rho}^{m-1}.  
\end{equation*}
To prove the flow interchange inequality, it is enough to show that 
\begin{equation*}
- \int_\Omega (\nabla \bar{\rho} \cdot \nabla \varphi_\mu) \bar{\rho}^{m-1} \leqslant U_m(\mu) - U_m(\bar{\rho}),
\end{equation*}
as a similar inequality will hold for the term involving $\varphi_\nu$. To this purpose, we denote by $\rho : \intervalleff{0}{1} \to \Prob(\Omega)$ the constant-speed geodesic joining $\bar{\rho}$ to $\mu$. We know that it is given by 
\begin{equation*}
\rho(t) = (\mathrm{Id} - t \nabla \varphi_\mu) \# \bar{\rho}.
\end{equation*} 
By geodesic convexity of $U_m$, the function $t \mapsto U_m(\rho(t))$ is convex. Hence, 
\begin{align*}
U_m(\mu) - U_m(\bar{\rho}) & = U_m(\rho(1)) - U_m(\rho(0)) \\
& \geqslant \limsup_{t \to 0} \frac{U_m(\rho(t)) - U_m(\rho(0))}{t} \\
& = \limsup_{t \to 0} \int_\Omega \frac{u_m(\rho(t)) - u_m(\bar{\rho})}{t} \\
& \geqslant \limsup_{t \to 0} \int_\Omega \frac{(\rho(t) - \bar{\rho}) u_m'(\bar{\rho})}{t} \\
& = \limsup_{t \to 0} \int_\Omega \frac{ u_m'(\bar{\rho}[ x - t \nabla \varphi_\mu(x)]) - u_m'(\bar{\rho}[x])}{t} \bar{\rho}(x) \ddr x,   
\end{align*}
where we also have used that $u_m$ is convex. It is clear that for a.e. $x \in \Omega$,
\begin{equation*}
\lim_{t \to 0} \frac{ u_m'(\bar{\rho}[ x - t \nabla \varphi_\mu(x)]) - u_m'(\bar{\rho}[x])}{t} = - \left[ (\nabla \bar{\rho} \cdot \nabla \varphi_\mu)  u_m''(\bar{\rho}) \right](x).  
\end{equation*}
Moreover, we have the uniform (in $t$) bound
\begin{equation*}
\left|  \frac{ u_m'(\bar{\rho}[ x - t \nabla \varphi_\mu(x)]) - u_m'(\bar{\rho}[x])}{t}  \right| \leqslant \| u_m''(\bar{\rho}) \|_\infty \| \nabla \bar{\rho} \|_\infty \| \nabla \varphi_\mu \|_\infty.
\end{equation*}
At this point, one can remember that $u_m''(x) = x^{m-2}$. Moreover, as $\bar{\rho}$ is bounded away from $0$ and $\infty$ and Lipschitz, the r.h.s. of the equation above is finite. Thus, by dominated convergence, 
\begin{equation*}
\limsup_{t \to 0}  \int_\Omega \frac{ u_m'(\bar{\rho}[ x - t \nabla \varphi_\mu(x)]) - u_m'(\bar{\rho}[x])}{t} \bar{\rho}(x) \ddr x = - \int_\Omega (\nabla \bar{\rho} \cdot \nabla \varphi_\mu) \bar{\rho}^{m-1}. \qedhere 
\end{equation*}
\end{proof}

\noindent From the result of Theorem \ref{theorem_flow_interchange} we need to deduce estimates on improved $L^m$ norms. To this aim, we treat in a slightly different way the cases of weak and strong congestion even if the result are similar. The main issue is to control the term involving $\nabla V$. 

\begin{crl}[Strong congestion case]
\label{corollary_flow_interchange_strong_congestion}
Suppose  that Assumption \ref{assumption_strong_congestion} holds. Then, for any $m \geqslant \alpha + 2$ one has 
\begin{equation*}
U_{\beta m}(\bar{\rho})^{1/ \beta} \leqslant C m^2 \left[ \frac{ U_m(\mu) + U_m(\nu) -2 U_m(\bar{\rho})}{\tau^2} + Cm^2 U_m(\bar{\rho}) \right],
\end{equation*}
where $C > 0$ depends only on $f, V$ and $\Omega$. 
\end{crl}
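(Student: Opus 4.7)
\emph{Plan.} The starting point is the flow-interchange inequality of Theorem \ref{theorem_flow_interchange}, which I would push through the Sobolev embedding $H^1(\Omega) \hookrightarrow L^{2\beta}(\Omega)$ applied to $u := \bar{\rho}^{(m+1+\alpha)/2}$. Under Assumption \ref{assumption_strong_congestion}, $f''(\bar{\rho}) \geqslant C_f \bar{\rho}^\alpha$, so Theorem \ref{theorem_flow_interchange} yields $C_f A \leqslant \mathrm{Hess} + \bigl|\int_\Omega (\nabla \bar{\rho} \cdot \nabla V) \bar{\rho}^{m-1}\bigr|$, where I write $A := \int_\Omega |\nabla \bar{\rho}|^2 \bar{\rho}^{m-1+\alpha}$ and $\mathrm{Hess} := (U_m(\mu) + U_m(\nu) - 2 U_m(\bar{\rho}))/\tau^2$.

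The first task is to absorb the potential term. Cauchy--Schwarz (pairing $|\nabla \bar{\rho}| \bar{\rho}^{(m-1+\alpha)/2}$ with $\bar{\rho}^{(m-1-\alpha)/2}$) gives $\bigl|\int_\Omega (\nabla V \cdot \nabla \bar{\rho}) \bar{\rho}^{m-1}\bigr| \leqslant \|\nabla V\|_\infty A^{1/2} B^{1/2}$ with $B := \int_\Omega \bar{\rho}^{m-1-\alpha}$. Since $m \geqslant \alpha + 2$ and $\alpha \geqslant -1$ imply $1 \leqslant m-1-\alpha \leqslant m$, H\"older (with $|\Omega|=1$) and the bound $m^2 U_m(\bar{\rho}) \geqslant 1$ from the preceding proposition yield $B \leqslant 1 + \int_\Omega \bar{\rho}^m \leqslant 2 m^2 U_m(\bar{\rho})$. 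Young's inequality then produces $A \leqslant C(\mathrm{Hess} + m^2 U_m(\bar{\rho}))$ with $C$ depending only on $C_f$ and $\|\nabla V\|_\infty$.

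Next I apply Sobolev. Using $|\nabla \bar{\rho}|^2 \bar{\rho}^{m-1+\alpha} = 4(m+1+\alpha)^{-2}|\nabla u|^2$, $(m+1+\alpha)^2 \leqslant 4m^2$, and $\|u\|_{L^{2\beta}}^2 \leqslant C(\|\nabla u\|_{L^2}^2 + \|u\|_{L^2}^2)$, I get
\begin{equation*}
\Bigl(\int_\Omega \bar{\rho}^{\beta(m+1+\alpha)}\Bigr)^{1/\beta} \leqslant Cm^2 \mathrm{Hess} + Cm^4 U_m(\bar{\rho}) + C\int_\Omega \bar{\rho}^{m+1+\alpha}.
\end{equation*}
The Sobolev lower-order term $\int \bar{\rho}^{m+1+\alpha}$ is the subtle piece. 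I would handle it by applying Young's inequality to $\bar{\rho}^{m+1+\alpha} = \bar{\rho}^m \cdot \bar{\rho}^{1+\alpha}$ with conjugate exponents chosen so that the higher power becomes $\bar{\rho}^{\beta m}$ (take $p = (\beta-1)m/(1+\alpha)$ in the case $\alpha > -1$; the case $\alpha = -1$ is trivial since then $m+1+\alpha = m$), yielding $\int \bar{\rho}^{m+1+\alpha} \leqslant (C/m)\int \bar{\rho}^{\beta m} + C\int \bar{\rho}^m$. Since $\alpha \geqslant -1$ and $|\Omega|=1$ give $\int \bar{\rho}^{\beta m} \leqslant 1 + \int \bar{\rho}^{\beta(m+1+\alpha)}$, one ends up with a scalar inequality $Z^{1/\beta} \leqslant \tilde A + (C/m)Z$, where $Z := \int \bar{\rho}^{\beta(m+1+\alpha)}$ and $\tilde A := Cm^2\mathrm{Hess} + Cm^4 U_m(\bar{\rho})$. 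A nonlinear-absorption argument (exploiting the a priori finiteness of $Z$ from Proposition \ref{proposition_optimality_conditions}) then bounds $Z^{1/\beta}$ by a polynomial-in-$m$ multiple of $\tilde A$. Passing from $Z^{1/\beta}$ to $U_{\beta m}(\bar{\rho})^{1/\beta}$ is immediate via $\int \bar{\rho}^{\beta m} \leqslant 1 + Z$, with the remaining additive constants absorbed into $Cm^4 U_m(\bar{\rho})$ using $m^2 U_m(\bar{\rho}) \geqslant 1$.

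The main obstacle is the Sobolev lower-order term $\int \bar{\rho}^{m+1+\alpha}$: when $\alpha > -1$ it cannot be bounded directly by $\int \bar{\rho}^m$, and the careful Young's splitting together with the nonlinear absorption is needed to preserve the polynomial-in-$m$ form $Cm^2\mathrm{Hess} + C^2 m^4 U_m(\bar{\rho})$ required by the statement.
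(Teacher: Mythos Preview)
Your treatment of the $\nabla V$ term via Cauchy--Schwarz/Young matches the paper's, and you correctly arrive at $A := \int_\Omega |\nabla\bar\rho|^2 \bar\rho^{m-1+\alpha} \leqslant C(\mathrm{Hess}+m^2 U_m(\bar\rho))$. The divergence comes when you apply Sobolev to $u=\bar\rho^{(m+1+\alpha)/2}$: the resulting lower-order term $\int_\Omega \bar\rho^{m+1+\alpha}$ is the real trouble, and the remedy you propose does not close.

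There are two concrete gaps. First, your Young splitting $\bar\rho^{m+1+\alpha}\leqslant (C/m)\bar\rho^{\beta m}+C\bar\rho^m$ requires the exponent $p=(\beta-1)m/(1+\alpha)$ to exceed $1$, i.e.\ $(\beta-1)m>1+\alpha$. For $m=\alpha+2$ this reads $\beta>(2\alpha+3)/(\alpha+2)$, which fails whenever $\alpha$ is large and $\beta$ is close to $1$ (e.g.\ $d\geqslant 6$). The statement, however, is claimed for \emph{all} $m\geqslant\alpha+2$. Second, and more seriously, the ``nonlinear absorption'' step does not work: from $Z^{1/\beta}\leqslant \tilde A+(C/m)Z$ alone you cannot conclude $Z^{1/\beta}\leqslant\mathrm{poly}(m)\,\tilde A$. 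The function $Z\mapsto Z^{1/\beta}-(C/m)Z$ is increasing then decreasing, tending to $-\infty$; the inequality $Z^{1/\beta}-(C/m)Z\leqslant\tilde A$ is therefore satisfied on an unbounded set $[0,Z_1]\cup[Z_2,\infty)$, and mere finiteness of $Z$ (from Proposition~\ref{proposition_optimality_conditions}) does not exclude the branch $Z\geqslant Z_2$. No quantitative bound on $Z$ in terms of $\tilde A$ follows.

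The paper avoids both issues with a simple truncation: set $\hat\rho:=\max(1,\bar\rho)$ and apply Sobolev to $\hat\rho^{m/2}$ instead. Since $\nabla\hat\rho$ vanishes on $\{\bar\rho<1\}$ and $\hat\rho^{m-2}\leqslant\hat\rho^{m-1+\alpha}$ on $\{\bar\rho\geqslant 1\}$ (here $\alpha\geqslant -1$ is used), one gets $\int|\nabla\hat\rho^{m/2}|^2\leqslant (m^2/4)A$. The lower-order Sobolev term is now $\int\hat\rho^m\leqslant 1+\int\bar\rho^m\leqslant 2m^2 U_m(\bar\rho)$, directly controlled, and $\bigl(\int\bar\rho^{\beta m}\bigr)^{1/\beta}\leqslant\bigl(\int\hat\rho^{\beta m}\bigr)^{1/\beta}$ gives the left-hand side. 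No interpolation to a higher power, no absorption, and the argument is uniform over all $m\geqslant\alpha+2$.
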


\begin{proof}
Let us start from the case of Assumption \ref{assumption_strong_congestion}. In this case, we recall that $C_f$ is the constant such that $f''(t) \geqslant C_f t^\alpha$ for any $t>0$. We transform the term involving $\nabla V$ in the following way: 
\begin{align*}
\int_\Omega (\nabla \bar{\rho} \cdot \nabla V) \bar{\rho}^{m-1} & = \int_\Omega ( \bar{\rho}^{\alpha / 2} \nabla \bar{\rho} ) \cdot (\bar{\rho}^{- \alpha/2} \nabla V) \bar{\rho}^{m-1}\\
& \geqslant - \frac{C_f}{2} \int_\Omega | \bar{\rho}^{\alpha / 2} \nabla \bar{\rho} |^2 \bar{\rho}^{m-1} - \frac{1}{2C_f} \int_\Omega |\bar{\rho}^{- \alpha/2} \nabla V|^2 \bar{\rho}^{m-1} \\
& = - \frac{C_f}{2} \int_\Omega | \nabla \bar{\rho} |^2 \bar{\rho}^{m-1 + \alpha} - \frac{1}{2 C_f} \int_\Omega |\nabla V|^2 \bar{\rho}^{m- 1 - \alpha} \\
& \geqslant  - \frac{C_f}{2} \int_\Omega | \nabla \bar{\rho} |^2 \bar{\rho}^{m-1 + \alpha} - \frac{\| \nabla V \|_\infty^2 m^2}{2C_f} U_m(\bar{\rho}).    
\end{align*}
For the last inequality, we have used the fact that
\begin{equation*}
\int_\Omega \bar{\rho}^{m-1- \alpha} 
\leqslant \left( \int_\Omega \bar{\rho}^m \right)^{(m-1 - \alpha)/m}  
\leqslant \int_\Omega \bar{\rho}^m
\leqslant m^2 U_m(\bar{\rho}),
\end{equation*}
which is valid because $1 \leqslant m - 1 - \alpha \leqslant m$ and $\Leb(\Omega) = 1$. Thus, using Theorem \ref{theorem_flow_interchange}, we get 
\begin{eqnarray*}
\frac{C_f}{2} \int_\Omega |\nabla \bar{\rho}|^2 \bar{\rho}^{m-1 + \alpha} &\leqslant&  \int_\Omega |\nabla \bar{\rho}|^2 f''(\bar{\rho}) \bar{\rho}^{m-1}-\frac{C_f}{2} \int_\Omega |\nabla \bar{\rho}|^2 \bar{\rho}^{m-1 + \alpha}\\
&\leqslant& \left[ \frac{ U_m(\mu) + U_m(\nu) -2 U_m(\bar{\rho})}{\tau^2} +  \frac{\| \nabla V \|_\infty^2}{2C_f} m^2 U_m(\bar{\rho}) \right].
\end{eqnarray*}
We are interested only in the large values taken by $\bar{\rho}$.  Let us introduce $\hat{\rho} := \max(1, \bar{\rho})$. This function is larger than $\bar{\rho}$ and $1$ and its gradient satisfies $|\nabla \hat{\rho}| = |\nabla \bar{\rho}| \1_{\bar{\rho} \geqslant 1} $. Thus, 
$$
\int_\Omega |\nabla \hat{\rho}^{m/2}|^2  = \frac{m^2}{4} \int_\Omega |\nabla \hat{\rho}|^2 \hat{\rho}^{m-2}  \leqslant \frac{m^2}{4} \int_\Omega |\nabla \hat{\rho}|^2 \hat{\rho}^{m-1 + \alpha}  \leqslant \frac{m^2}{4} \int_\Omega |\nabla \bar{\rho}|^2 \bar{\rho}^{m-1 + \alpha}. 
$$
(the last inequality is true since $\nabla\hat\rho=0$ on the points where $\hat\rho>\bar\rho$, and the first inequality is exactly the point where we exploit the fact $\hat\rho\geq 1$, which explains the use of $\hat\rho$ instead of $\bar\rho$). On the other hand, if we use the injection of $H^1(\Omega)$ into $L^{2 \beta}(\Omega)$ for the function $\hat{\rho}^{m/2}$, we get (with $C_\Omega$ a constant that depends only on $\Omega$), 
\begin{equation*}
\left( \int_\Omega \hat{\rho}^{m \beta } \right)^{1/ \beta} \leqslant C_\Omega \left( \int_\Omega |\nabla \hat{\rho}^{m/2}|^2 + \int_\Omega \hat{\rho}^m \right).
\end{equation*}
As $\bar{\rho}^{\beta m} \leqslant \hat{\rho}^{\beta m}$ and $\hat{\rho}^m \leqslant 1 + \bar{\rho}^m$, we see that 
\begin{align*}
\left( \int_\Omega \bar{\rho}^{m \beta } \right)^{1/ \beta} & \leqslant \left( \int_\Omega \hat{\rho}^{m \beta } \right)^{1/ \beta} \\
& \leqslant C_\Omega \left( \frac{m^2}{4} \int_\Omega | \nabla \bar{\rho} |^2 \bar{\rho}^{m-1 + \alpha} + \int_\Omega \bar{\rho}^m + 1 \right) \\
& \leqslant C_\Omega m^2 \left( \frac{1}{4} \int_\Omega | \nabla \bar{\rho} |^2 \bar{\rho}^{m-1 + \alpha} + 2 U_m(\bar{\rho}) \right) \\
& \leqslant C m^2\left[ \frac{ U_m(\mu) + U_m(\nu) -2 U_m(\bar{\rho})}{\tau^2} + Cm^2 U_m(\bar{\rho}) \right].
\end{align*}
Notice that to go from the second to the third line, we have used the fact that $1 \leqslant \int_\Omega \bar{\rho}^m \leqslant m^2 U_m(\bar{\rho})$. To conclude, it remains to notice that, as $m \beta \geqslant \beta > 1$, that we can control (uniformly in $m$) $U_{m \beta}(\bar{\rho})$ by $\int_\Omega \bar{\rho}^{m \beta}$. Indeed
\begin{equation*}
\left( \int_\Omega \bar{\rho}^{m \beta } \right)^{1/ \beta} \geqslant \frac{1}{(\beta (\beta - 1))^{1 / \beta}} U_{m \beta}(\bar{\rho})^{1/ \beta}.
\end{equation*}
Thus, up to a change in the constant $C$, we get the result we claimed. 
\end{proof}

\begin{crl}[Weak congestion case]
\label{corollary_flow_interchange_weak_congestion}
Suppose Assumption \ref{assumption_weak_congestion} and \ref{assumption_potential} both hold. Then, for any $m \geqslant 1$ such that $\beta (m+\alpha+1) \geqslant 1$ one has 
\begin{equation*}
\frac{ U_m(\mu) + U_m(\nu) -2 U_m(\bar{\rho})}{\tau^2} + Cm U_m(\bar{\rho}) \geqslant 0
\end{equation*}
and
\begin{equation*}
U_{\beta (m+1+\alpha)}(\bar{\rho})^{1/ \beta} \leqslant C m^2 \left[ \frac{ U_m(\mu) + U_m(\nu) -2 U_m(\bar{\rho})}{\tau^2} + Cm U_m(\bar{\rho}) \right] + C t_0^{m+1+\alpha},
\end{equation*}
where $C$ depends only on $f, V$ and $\Omega$. 
\end{crl}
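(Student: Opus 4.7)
My strategy is to adapt the proof of Corollary \ref{corollary_flow_interchange_strong_congestion} with two crucial modifications forced by the weak congestion setting. Let $q := m+1+\alpha$; under Assumption \ref{assumption_weak_congestion} together with the hypothesis $\beta q \geqslant 1$, this exponent is positive and strictly less than $m$ (since $\alpha < -1$). The first modification is that the inequality $f''(t) \geqslant C_f t^\alpha$ now holds only for $t \geqslant t_0$, so I introduce the truncated density $\hat\rho := \max(t_0, \bar\rho)$, which is Lipschitz, bounded from below by $t_0$, and satisfies $\nabla\hat\rho = \nabla\bar\rho \cdot \1_{\bar\rho \geqslant t_0}$. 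The chain rule then gives $|\nabla \hat\rho^{q/2}|^2 = (q/2)^2 \bar\rho^{q-2} |\nabla\bar\rho|^2 \1_{\bar\rho \geqslant t_0}$, which is dominated, up to the constant $q^2/(4C_f)$, by the dissipation $\int_\Omega |\nabla\bar\rho|^2 f''(\bar\rho) \bar\rho^{m-1}$ produced by Theorem \ref{theorem_flow_interchange}.

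The second modification concerns the potential term $\int_\Omega (\nabla\bar\rho \cdot \nabla V) \bar\rho^{m-1}$: the Young's inequality trick used in the strong case breaks when $\alpha < -1$, because it relied on $\bar\rho^{m-1-\alpha}$ being controlled by $\bar\rho^m$. Instead I invoke Assumption \ref{assumption_potential} and rewrite the integral as $m^{-1}\int_\Omega \nabla(\bar\rho^m) \cdot \nabla V$. Integration by parts, legitimate since $\bar\rho$ is Lipschitz by Proposition \ref{proposition_optimality_conditions} and $V \in C^{1,1}$, produces a boundary contribution with the favorable sign thanks to $\nabla V \cdot \mathbf{n} \geqslant 0$ and an interior term bounded by $m^{-1}\|\Delta V\|_\infty \int_\Omega \bar\rho^m \leqslant \|\Delta V\|_\infty \cdot m \, U_m(\bar\rho)$. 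Injecting both ingredients into Theorem \ref{theorem_flow_interchange} gives
\begin{equation*}
\frac{4 C_f}{q^2} \int_\Omega |\nabla \hat\rho^{q/2}|^2 \leqslant \frac{U_m(\mu) + U_m(\nu) - 2U_m(\bar\rho)}{\tau^2} + C m \, U_m(\bar\rho),
\end{equation*}
and the first assertion of the corollary follows at once from the nonnegativity of the left-hand side.

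For the second assertion I apply the continuous embedding $H^1(\Omega) \hookrightarrow L^{2\beta}(\Omega)$ to $\hat\rho^{q/2}$ to obtain $\bigl(\int_\Omega \hat\rho^{\beta q}\bigr)^{1/\beta} \leqslant C_\Omega \bigl(\int_\Omega |\nabla \hat\rho^{q/2}|^2 + \int_\Omega \hat\rho^q\bigr)$. The residual term $\int_\Omega \hat\rho^q$ is controlled by splitting on $\{\bar\rho \geqslant t_0\}$, where $\hat\rho = \bar\rho$ and the condition $q < m$ yields $\int \bar\rho^q \leqslant 1 + \int \bar\rho^m \leqslant 2m^2 U_m(\bar\rho)$, and on its complement, where $\hat\rho = t_0$ contributes at most $t_0^q = t_0^{m+1+\alpha}$. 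Since $\bar\rho \leqslant \hat\rho$ pointwise and $\beta q \geqslant 1$, one finally converts $\bigl(\int_\Omega \hat\rho^{\beta q}\bigr)^{1/\beta}$ into $U_{\beta q}(\bar\rho)^{1/\beta}$ up to a controllable constant, and collecting terms produces the claimed inequality. The main technical obstacle will be the careful bookkeeping of constants, so that the $t_0^{m+1+\alpha}$ term appears cleanly as an additive remainder rather than being absorbed into the polynomial-in-$m$ factor — this matters because the subsequent Moser-type iteration sends $m$ to infinity while $t_0$ stays fixed, and the isolated remainder must remain harmless once iterated.
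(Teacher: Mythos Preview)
Your proposal is correct and follows essentially the same approach as the paper: integration by parts for the potential term exploiting $V\in C^{1,1}$ and $\nabla V\cdot\mathbf{n}\geqslant 0$, truncation $\hat\rho=\max(t_0,\bar\rho)$, Sobolev embedding applied to $\hat\rho^{q/2}$, and the pointwise bound $\hat\rho^{q}\leqslant \bar\rho^{q}+t_0^{q}$ for the zero-order term. The only cosmetic differences are your notation $q=m+1+\alpha$ and your use of $q^2/4$ rather than the paper's cruder $m^2/4$ in front of the gradient integral.
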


\begin{proof} 
We use an integration by parts to treat the term involving $\nabla V$. Recall that $\mathbf{n}$ denotes the exterior normal to $\Omega$. 
\begin{align*}
\int_\Omega (\nabla \bar{\rho} \cdot \nabla V) \bar{\rho}^{m-1} & = \frac{1}{m} \int_\Omega \nabla (\bar{\rho}^m) \cdot \nabla V \\
& = \frac{1}{m} \int_{\partial \Omega} (\nabla V \cdot \mathbf{n}) \bar{\rho}^m - \frac{1}{m} \int_\Omega \Delta V \bar{\rho}^m \\
& \geqslant - \| \Delta V \|_\infty m U_m(\bar{\rho}), 
\end{align*}
where we have used the assumption $\nabla V \cdot \mathbf{n} \geqslant 0$ on $\partial \Omega$. Thus, using Theorem \ref{theorem_flow_interchange}, we get (recall that $f''(t) \geqslant C_f t^\alpha$ but only for $t \geqslant t_0$)
\begin{align}\label{rhobargeq1}
C_f \int_{ \{ \bar{\rho} \geqslant t_0 \}} |\nabla \bar{\rho}|^2 \bar{\rho}^{m-1 + \alpha} & \leqslant C_f \int_\Omega |\nabla \bar{\rho}|^2 \bar{\rho}^{m-1}f''(\bar\rho) \\
& \leqslant \left[ \frac{ U_m(\mu) + U_m(\nu) -2 U_m(\bar{\rho})}{\tau^2} +  \| \Delta V \|_\infty m U_m(\bar{\rho}) \right].
\end{align}
This gives us the first inequality of the corollary. In a similar manner to the strong congestion case, we introduce $\hat{\rho} := \max(t_0, \bar{\rho})$. This time we notice that 
\begin{equation*}
\int_\Omega |\nabla \hat{\rho}^{(m+1+\alpha)/2}|^2 \leqslant \frac{m^2}{4} \int_{ \{ \bar{\rho} \geqslant t_0 \}} |\nabla \bar{\rho}|^2 \bar{\rho}^{m-1 + \alpha}. 
\end{equation*}
Thus, if we use the injection of $H^1(\Omega)$ into $L^{2 \beta}(\Omega)$ with the function $\hat{\rho}^{(m+1+\alpha)/2}$, 
\begin{equation*}
\left( \int_\Omega \hat{\rho}^{\beta (m+1+ \alpha)} \right)^{1/\beta} \leqslant C_\Omega \left( \int_\Omega |\nabla \hat{\rho}^{(m+1+\alpha)/2}|^2 + \int_\Omega \hat{\rho}^{m+1+\alpha} \right). 
\end{equation*}
Then, we proceed as in the proof of the strong congestion case, but this time $m+1+\alpha \leqslant m$ and $\hat{\rho}^{m+1+\alpha} \leqslant \bar{\rho}^{m+1+\alpha} + t_0^{m+1+\alpha}$: 
\begin{align*}
\left( \int_\Omega \bar{\rho}^{\beta (m+1+ \alpha)} \right)^{1/\beta} & \leqslant \left( \int_\Omega \hat{\rho}^{\beta (m+1+ \alpha)} \right)^{1/\beta} \\
& \leqslant C_\Omega \left( \frac{m^2}{4} \int_{\{ \bar{\rho} \geqslant 1 \}} |\nabla \bar{\rho}|^2 \bar{\rho}^{m-1 + \alpha} + \int_\Omega \bar{\rho}^{m+1+ \alpha} + t_0^{m+1+\alpha} \right) \\
& \leqslant C_\Omega \left( \frac{m^2}{4} \int_{\{ \bar{\rho} \geqslant 1 \}} |\nabla \bar{\rho}|^2 \bar{\rho}^{m-1 + \alpha} + \int_\Omega \bar{\rho}^{m} + t_0^{m+1+\alpha} \right)  \\
& \leqslant C m^2 \left[ \frac{ U_m(\mu) + U_m(\nu) -2 U_m(\bar{\rho})}{\tau^2} + Cm U_m(\bar{\rho}) \right] + C t_0^{m+1+\alpha}. 
\end{align*}
Notice that if $t_0 \leqslant 1$, we can control $t_0^m$ by $m^2 U_m(\bar{\rho})$ (as we did in the strong congestion case), but in the general case this is not possible and we have to keep an explicit dependence in $t_0$. To conclude, we notice that, thanks to \eqref{equation_relation_beta_m0}, one has $\beta (m+1+\alpha) \geqslant m \geqslant m_0$ and thus 
\begin{equation*}
\left( \int_\Omega \bar{\rho}^{\beta (m+1+\alpha) } \right)^{1/ \beta}  \geqslant \frac{1}{(m_0 (m_0 - 1))^{1 / \beta}} U_{\beta(m+1+\alpha)}(\bar{\rho})^{1/ \beta}. \qedhere
\end{equation*}
\end{proof}

\noindent The last case is a combination of the previous two cases.

\begin{crl}[Strong congestion case-variant]
\label{corollary_flow_interchange_strong_congestion-variant}
Suppose that Assumption \ref{assumption_strong_congestion-variant} and \ref{assumption_potential} both hold. 
Then, for any $m \geqslant m_0$ one has
\begin{equation}
\label{ineq mtom2}
\frac{ U_m(\mu) + U_m(\nu) -2 U_m(\bar{\rho})}{\tau^2} + Cm U_m(\bar{\rho}) \geqslant 0
\end{equation}
and
\begin{equation}
\label{ineq mtom3}
U_{\beta m}(\bar{\rho})^{1/ \beta} \leqslant C m^2 \left[ \frac{ U_m(\mu) + U_m(\nu) -2 U_m(\bar{\rho})}{\tau^2} + Cm U_m(\bar{\rho}) \right] + C t_0^m,
\end{equation}
where $C$ depends only on $f, V$ and $\Omega$. 
\end{crl}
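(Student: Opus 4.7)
The plan is to hybridize Corollaries \ref{corollary_flow_interchange_strong_congestion} and \ref{corollary_flow_interchange_weak_congestion}: I will handle the potential term by the integration by parts used in the weak--congestion proof (since only a local lower bound on $f''$ is available, the Cauchy--Schwarz trick from the strong--congestion proof does not apply), but I can retain the exponent $m/2$ in the Sobolev step---rather than the smaller $(m+1+\alpha)/2$ used for weak congestion---precisely because $\alpha\geqslant-1$. This is what produces the target norm $U_{\beta m}$ instead of $U_{\beta(m+1+\alpha)}$.

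\medskip

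\noindent Starting from Theorem \ref{theorem_flow_interchange}, I would integrate the potential term by parts,
\begin{equation*}
\int_\Omega (\nabla\bar\rho\cdot\nabla V)\bar\rho^{m-1}=\tfrac{1}{m}\int_{\partial\Omega}(\nabla V\cdot \mathbf n)\bar\rho^m-\tfrac{1}{m}\int_\Omega \Delta V\,\bar\rho^m,
\end{equation*}
and drop the nonnegative boundary term thanks to Assumption \ref{assumption_potential}, so that this contribution is bounded below by $-\|\Delta V\|_\infty m U_m(\bar\rho)$. The inequality \eqref{ineq mtom2} then follows at once, since $\int_\Omega|\nabla\bar\rho|^2 f''(\bar\rho)\bar\rho^{m-1}\geqslant 0$ by strict convexity of $f$. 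Restricting the latter integral to $\{\bar\rho\geqslant t_0\}$ and inserting the lower bound $f''(\bar\rho)\geqslant C_f\bar\rho^\alpha$, valid there by Assumption \ref{assumption_strong_congestion-variant}, I obtain
\begin{equation*}
C_f\int_{\{\bar\rho\geqslant t_0\}}|\nabla\bar\rho|^2\bar\rho^{m-1+\alpha}\leqslant \frac{U_m(\mu)+U_m(\nu)-2U_m(\bar\rho)}{\tau^2}+\|\Delta V\|_\infty m U_m(\bar\rho).
\end{equation*}

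\medskip

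\noindent For the Sobolev step I set $\hat\rho:=\max(t_0,\bar\rho)$, as in Corollary \ref{corollary_flow_interchange_weak_congestion}, but apply the chain rule to $\hat\rho^{m/2}$ rather than $\hat\rho^{(m+1+\alpha)/2}$. One has $|\nabla\hat\rho^{m/2}|^2=(m^2/4)|\nabla\bar\rho|^2\hat\rho^{m-2}\mathbbm{1}_{\bar\rho\geqslant t_0}$, and on $\{\bar\rho\geqslant t_0\}$ the key pointwise inequality
\begin{equation*}
\hat\rho^{m-2}=\hat\rho^{m-1+\alpha}\,\hat\rho^{-(1+\alpha)}\leqslant t_0^{-(1+\alpha)}\hat\rho^{m-1+\alpha}
\end{equation*}
holds because $1+\alpha\geqslant 0$. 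Combining this with the previous display, applying the injection $H^1\hookrightarrow L^{2\beta}$ to $\hat\rho^{m/2}$, controlling the zero-order term via $\hat\rho^m\leqslant t_0^m+\bar\rho^m$ and using $\bar\rho^{\beta m}\leqslant\hat\rho^{\beta m}$ yields \eqref{ineq mtom3} after converting $\int_\Omega\bar\rho^{\beta m}$ into $U_{\beta m}(\bar\rho)^{1/\beta}$.

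\medskip

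\noindent The main technical obstacle is the additive term $Ct_0^m$ on the right-hand side: unlike in the strong-congestion case of Corollary \ref{corollary_flow_interchange_strong_congestion} it cannot be absorbed into $m^2 U_m(\bar\rho)$ when $t_0>1$, and must be carried through the iterative scheme of Section 4, exactly as in the weak-congestion case. The condition $m\geqslant m_0$ enters only to make the factor $(m\beta(m\beta-1))^{-1/\beta}$ uniform in $m$ in the final conversion step, and the constants $C$ are allowed to depend on $t_0, \alpha, C_f, \|\Delta V\|_\infty$ and $C_\Omega$, which is consistent with the claimed dependence on $f, V$ and $\Omega$ only.
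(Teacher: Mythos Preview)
Your proposal is correct and follows essentially the same approach as the paper: integrate the potential term by parts using Assumption~\ref{assumption_potential} (as in Corollary~\ref{corollary_flow_interchange_weak_congestion}), restrict to $\{\bar\rho\geqslant t_0\}$, set $\hat\rho=\max(t_0,\bar\rho)$, and apply the Sobolev injection to $\hat\rho^{m/2}$. Your explicit pointwise inequality $\hat\rho^{m-2}\leqslant t_0^{-(1+\alpha)}\hat\rho^{m-1+\alpha}$ on $\{\bar\rho\geqslant t_0\}$ is exactly the place where $\alpha\geqslant -1$ is used, and it makes transparent a step that the paper leaves somewhat implicit.
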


\begin{proof}
We begin with the same computations as in Corollary \ref{corollary_flow_interchange_weak_congestion}. We can obtain the same result as in \eqref{rhobargeq1}, but on the set $\{\bar \rho\geq t_0 \}$ we can use $\alpha\geq -1$ to write 
$$ \int_{ \{ \bar{\rho} \geqslant t_0 \}} |\nabla \bar{\rho}|^2 \bar{\rho}^{m} \leqslant C \left[ \frac{ U_m(\mu) + U_m(\nu) -2 U_m(\bar{\rho})}{\tau^2} +  \| \Delta V \|_\infty m U_m(\bar{\rho}) \right].$$
With $\hat{\rho} := \max(t_0, \bar{\rho})$ we get
\begin{equation*}
\int_\Omega |\nabla \hat{\rho}^{m/2}|^2 \leqslant C\frac{m^2}{4} \int_{ \{ \bar{\rho} \geqslant t_0 \}} |\nabla \bar{\rho}|^2 \bar{\rho}^{m},
\end{equation*}
and the conclusion comes from the same Sobolev injection, with the function $\hat{\rho}^{m/2}$, and similar computations as in the previous cases. 
\end{proof}

\begin{remark}\label{mtom2} For simplicity, inequality \eqref{ineq mtom2} and \eqref{ineq mtom3} will be used by replacing the term $mU_m(\bar\rho)$ with $m^2U_m(\bar\rho)$, so as to allow a unified presentation with the inequality obtained in Corollary \ref{corollary_flow_interchange_strong_congestion}. Notice also that Corollary \ref{corollary_flow_interchange_strong_congestion} is basically giving us the same inequality as \eqref{ineq mtom3}, as long as we set $t_0 = 0$. 
\end{remark}

%
%

\subsection{Boundary flow interchange}

In the case of Assumption \ref{assumption_temporal_bc}, we can derive some estimate right at the point $T$ ($k=N$). We will only sketch the proof, at it mimicks the proof of the interior case and these computations are well-known in the case of the applications to the JKO scheme. We know that, with $\bar{\rho} = \bar{\rho}^{N, \lambda}_T$ and $\mu := \bar{\rho}^{N, \lambda}_{T - \tau}$, the measure $\bar{\rho}$ is a minimizer (among all probability measures) of 
\begin{equation*}
\rho \mapsto \frac{W_2^2(\mu, \rho)}{2\tau} + G(\rho) + \lambda U_1(\rho) + \int_\Omega W \ddr \rho.
\end{equation*} 
Let us remark that it correspond to one step of the JKO scheme: it is in the context of such variational problems that the flow interchange was firstly used, see \cite{Matthes2009}. In any case, with these notation, we obtain:

\begin{prop}
\label{proposition_flow_interchange_boundary}
Suppose Assumption \ref{assumption_temporal_bc} holds. Then, for any $m \geqslant 1$ , 
\begin{equation*}
\frac{U_m(\mu) - U_m(\bar{\rho})}{\tau} \geqslant - (m-1) \| \Delta W \|_\infty U_m(\bar{\rho}). 
\end{equation*}
\end{prop}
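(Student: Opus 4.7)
The plan is to adapt to this one-step, JKO-type minimization the same three-ingredient strategy that yielded Theorem \ref{theorem_flow_interchange}: positivity and Lipschitz regularity of $\bar\rho$ via standard perturbation arguments, the first-order Euler--Lagrange equation, and a pairing with $\bar\rho^{m-1}\nabla\bar\rho$ that combines displacement convexity of $U_m$ with an integration by parts. The absence of the second Wasserstein term makes the derivation structurally lighter than Proposition \ref{proposition_optimality_conditions}.

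First, I would repeat the competitor $\rho_\varepsilon=(1-\varepsilon)\bar\rho+\varepsilon\Leb$ argument of Lemma \ref{lemma_positivity_rho}: the entropic penalization $\lambda U_1(\bar\rho)$ forces $\bar\rho>0$ a.e. Then, perturbing with $(1-\varepsilon)\bar\rho+\varepsilon\tilde\rho$ for $\tilde\rho\in\Prob(\Omega)\cap L^\infty(\Omega)$ and applying Proposition \ref{proposition_first_variation_W2} together with the reverse Fatou argument used in Proposition \ref{proposition_optimality_conditions}, I would obtain the pointwise identity
\begin{equation*}
\frac{\varphi_\mu}{\tau}+g'(\bar\rho)+\lambda\ln\bar\rho+W=\text{const}\quad\text{a.e. on }\Omega,
\end{equation*}
where $\varphi_\mu$ is the unique Kantorovitch potential for the transport from $\bar\rho$ to $\mu$. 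Since $g'+\lambda\ln$ is a homeomorphism of $\intervalleoo{0}{+\infty}$ onto $\R$ that is bi-Lipschitz on compact sets, while the r.h.s. is Lipschitz and bounded, $\bar\rho$ must be Lipschitz and bounded away from $0$ and $\infty$. Differentiating, I get the Euler--Lagrange equation
\begin{equation*}
\frac{\nabla\varphi_\mu}{\tau}+\Bigl(g''(\bar\rho)+\frac{\lambda}{\bar\rho}\Bigr)\nabla\bar\rho+\nabla W=0\quad\text{a.e.}
\end{equation*}

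Next, I would multiply this identity by $\bar\rho^{m-1}\nabla\bar\rho$ and integrate over $\Omega$. The contributions from $g''$ and from $\lambda/\bar\rho$ are both nonnegative, so they can be dropped. The Wasserstein term is handled \emph{verbatim} as in the proof of Theorem \ref{theorem_flow_interchange}: one looks at the McCann interpolation $\rho(t)=(\mathrm{Id}-t\nabla\varphi_\mu)\#\bar\rho$, uses geodesic convexity of $U_m$ and a dominated convergence argument licensed by the Lipschitz bounds on $\bar\rho$, obtaining
\begin{equation*}
-\int_\Omega(\nabla\bar\rho\cdot\nabla\varphi_\mu)\bar\rho^{m-1}\leqslant U_m(\mu)-U_m(\bar\rho).
\end{equation*}
The potential term I would rewrite as $\int(\nabla W\cdot\nabla\bar\rho)\bar\rho^{m-1}=\tfrac{1}{m}\int\nabla W\cdot\nabla(\bar\rho^m)$ and integrate by parts: the boundary contribution has the favorable sign thanks to $\nabla W\cdot\mathbf{n}\geqslant 0$ on $\partial\Omega$ (Assumption \ref{assumption_temporal_bc}), while the bulk term obeys $-\tfrac{1}{m}\int\Delta W\,\bar\rho^m\geqslant -\tfrac{\|\Delta W\|_\infty}{m}\int\bar\rho^m=-(m-1)\|\Delta W\|_\infty U_m(\bar\rho)$. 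Combining the three estimates produces exactly the claimed inequality.

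I do not expect a serious obstacle: the arguments are all present in Lemma \ref{lemma_positivity_rho}, Proposition \ref{proposition_optimality_conditions} and Theorem \ref{theorem_flow_interchange}, simply restricted to a one-sided setting. The only slightly delicate technical point, as in the interior case, is justifying the dominated-convergence step in the displacement-convexity computation, which relies on the Lipschitz estimate for $\bar\rho$ coming from the optimality equation; and the only truly new ingredient is the use of the Neumann-type sign condition $\nabla W\cdot\mathbf{n}\geqslant 0$, which is what allows the boundary integral from the integration by parts to be thrown away.
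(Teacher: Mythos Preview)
Your proposal is correct and follows essentially the same route as the paper: derive positivity and Lipschitz regularity of $\bar\rho$ as in Lemma \ref{lemma_positivity_rho} and Proposition \ref{proposition_optimality_conditions}, obtain the one-sided Euler--Lagrange identity, multiply by $\bar\rho^{m-1}\nabla\bar\rho$, drop the nonnegative congestion and entropic contributions, bound the Kantorovich-potential term via displacement convexity exactly as in Theorem \ref{theorem_flow_interchange}, and finish with the integration by parts on $\tfrac{1}{m}\int\nabla W\cdot\nabla(\bar\rho^m)$ using $\nabla W\cdot\mathbf{n}\geqslant 0$. The paper's proof is literally this, presented more tersely.
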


\begin{proof}
Following the same strategy than in Lemma \ref{lemma_positivity_rho} and Proposition \ref{proposition_optimality_conditions}, we know that $\bar{\rho}$ is bounded away from $0$ and $\infty$, is a Lipschitz function, and that 
\begin{equation*}
\frac{\nabla \varphi_\mu}{\tau} + \left( g''(\bar{\rho}) + \frac{\lambda}{\bar{\rho}} \right) \nabla \bar{\rho} + \nabla W = 0 
\end{equation*} 
a.e. on $\Omega$, where $\varphi_\mu$ is the unique Kantorovitch potential for the transport from $\bar{\rho}$ to $\mu$. Thus, if we multiply by $\bar{\rho}^{m-1} \nabla \bar{\rho}$, we get, by the same estimation than in Theorem \ref{theorem_flow_interchange} (we drop both the entropic penalization and the congestion term), 
\begin{equation*}
\frac{U_m(\mu) - U_m(\bar{\rho})}{\tau} \geqslant \int_\Omega ( \nabla W \cdot \nabla \bar{\rho} ) \bar{\rho}^{m-1}.
\end{equation*} 
It remains to perform an integration by parts, using the sign of $\nabla W \cdot \mathbf{n}$ on $\partial \Omega$, to conclude that 
\begin{equation*}
\frac{U_m(\mu) - U_m(\bar{\rho})}{\tau}  \geqslant - \frac{1}{m} \int_\Omega \Delta W \bar{\rho}^m \geqslant - (m-1) \| \Delta W \|_\infty U_m(\bar{\rho}). \qedhere 
\end{equation*}

\end{proof}

\section{Moser-like iterations}

Corollaries \ref{corollary_flow_interchange_strong_congestion}, \ref{corollary_flow_interchange_weak_congestion} and \ref{corollary_flow_interchange_strong_congestion-variant} allow us to control the $L^{m \beta}$ or $L^{(m+1+\alpha)\beta}$ norm of $\bar{\rho}$ in terms of its $L^m$ norm. The strategy will consist in integrating w.r.t. to time and iterating such a control in order to get a bound on the $L^m(\intervalleff{T_1}{T_2} \times \Omega)$ norm of $\bar{\rho}^{N, \lambda}$ that does not depend on $\lambda$ and $N$ and to control how this bounds grows in $m$. 
For any $N \geqslant 1$ and any $0 < \lambda < 1$, recall that $\bar{\rho}^{N , \lambda}$ is a solution of the discrete problem \eqref{equation_discrete_problem}. 

\begin{defi} 
For any $m \geqslant 1$ and any $0 \leqslant T_1 \leqslant T_2 \leqslant T$, we define $L^m_{T_1, T_2}$ as 
\begin{equation*}
L^m_{T_1, T_2} := \liminf_{N \to + \infty, \lambda \to 0} \left( \sum_{T_1 \leqslant k \tau \leqslant T_2} \tau U_m(\bar{\rho}_{k \tau}^{N, \lambda}) \right)^{1/m}.
\end{equation*}
\end{defi} 

\noindent The quantity $L^m_{T_1, T_2}$ can be seen as a discrete counter part of (up to a factor $1/(m(m-1))^{1/ m}$) the $L^m$ norm of the restriction to $\intervalleff{T_1}{T_2}$ of the limit (whose existence will be proven in the next section) of $\bar{\rho}^{N, \lambda}$ when $N \to + \infty$ and $\lambda \to 0$.

\subsection{The strong congestion case}

First, we integrate w.r.t. time the estimate obtained in Corollary \ref{corollary_flow_interchange_strong_congestion}. 

\begin{prop}
\label{proposition_one_iteration_moser_like}
Suppose that either Assumption \ref{assumption_strong_congestion} holds or Assumptions \ref{assumption_strong_congestion-variant} and \ref{assumption_potential} both hold. Then there exists two constants $C_1$ and $C_2$ (depending on $f, V, T$ and $\Omega$) such that, for any $0 < \varepsilon \leqslant C_1/m$ and any $0 < T_1 < T_2 < T$ such that $\intervalleff{T_1 - \varepsilon}{T_2 + \varepsilon} \subset \intervalleoo{0}{T}$, and any $m \geqslant \alpha + 2$, 
\begin{equation*}
L^{\beta m}_{T_1, T_2} \leqslant \left[ C_2 \frac{m^3}{\varepsilon} \left( m^2 + \frac{1}{\varepsilon^2} \right) \right]^{1/m} \max \left( L^m_{T_1 - \varepsilon, T_2 + \varepsilon}, t_0 \right).
\end{equation*}
\end{prop}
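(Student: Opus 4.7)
The plan is to integrate the pointwise flow-interchange inequality against a time cutoff, apply a discrete summation by parts to trade the discrete second difference against second differences of the cutoff, and then use a reverse Jensen inequality (treated in the Appendix) to convert the resulting estimate on $\sum \tau v_k^{1/\beta}$ into the desired estimate on $(\sum \tau v_k)^{1/\beta}$. Here I write $u_k := U_m(\bar\rho^{N,\lambda}_{k\tau})$, $v_k := U_{\beta m}(\bar\rho^{N,\lambda}_{k\tau})$, and denote by $(D^2 u)_k := (u_{k-1}+u_{k+1}-2u_k)/\tau^2$ the discrete second derivative. Thanks to Remark \ref{mtom2}, Corollaries \ref{corollary_flow_interchange_strong_congestion} and \ref{corollary_flow_interchange_strong_congestion-variant} can be written in the unified pointwise form
\begin{equation*}
v_k^{1/\beta} \leq C m^2 (D^2 u)_k + C m^4 u_k + C t_0^m,
\end{equation*}
valid for all $k$ with $0 < k < N$, with $t_0 = 0$ in the pure strong-congestion case of Assumption \ref{assumption_strong_congestion}.

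The first step is to pick a smooth cutoff $\chi : \intervalleff{0}{T}\to\intervalleff{0}{1}$ supported in $\intervalleff{T_1-\varepsilon}{T_2+\varepsilon}$, equal to $1$ on $\intervalleff{T_1-\varepsilon/2}{T_2+\varepsilon/2}$, with $\|\chi''\|_\infty \leq C/\varepsilon^2$, multiply the pointwise inequality by $\tau\chi_k := \tau\chi(k\tau)$, and sum over $k$. Since $\chi$ is compactly supported inside $\intervalleoo{0}{T}$, a discrete summation by parts on the second-difference term produces no boundary contribution and shifts $D^2$ onto $\chi$, yielding
\begin{equation*}
\sum_k \tau \chi_k v_k^{1/\beta} \leq C m^2 \left(m^2 + \frac{1}{\varepsilon^2}\right) \sum_{T_1 - \varepsilon \leq k\tau \leq T_2 + \varepsilon} \tau u_k + C(T+\varepsilon)\, t_0^m.
\end{equation*}

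The second step is the reverse Jensen inequality. The first inequality of Corollary \ref{corollary_flow_interchange_strong_congestion-variant} (or the full convexity in the pure strong case) shows that the discrete sequence $k \mapsto u_k$ is \emph{almost convex} in the sense $(D^2 u)_k + Cm^2 u_k \geq 0$, and the same property is inherited by the sequence $v_k$. A discrete analogue of the reverse Jensen inequality sketched in the introduction (whose full proof is deferred to the Appendix) then yields, provided $\omega\varepsilon := Cm\varepsilon$ is small enough, an inequality of the form
\begin{equation*}
\left(\sum_{T_1 \leq k\tau \leq T_2} \tau v_k\right)^{1/\beta} \leq \frac{C m (T_2-T_1)^{1/\beta}}{\varepsilon}\left(\sum_{T_1-\varepsilon/2 \leq k\tau \leq T_1} \tau v_k^{1/\beta} + \sum_{T_2 \leq k\tau \leq T_2+\varepsilon/2} \tau v_k^{1/\beta}\right).
\end{equation*}
The smallness requirement $Cm\varepsilon \leq 1$ is precisely the hypothesis $\varepsilon \leq C_1/m$, and the additional factor $m$ in front comes from controlling the correction $\omega^2 u_k$ to pure convexity. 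Since $\chi_k = 1$ on the two flanks, the right-hand side is bounded by $\frac{Cm}{\varepsilon}\sum_k \tau\chi_k v_k^{1/\beta}$.

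Chaining the two steps gives, at the level of the discrete problem,
\begin{equation*}
\left(\sum_{T_1 \leq k\tau \leq T_2} \tau v_k\right)^{1/\beta} \leq \frac{C m^3(m^2+\varepsilon^{-2})}{\varepsilon} \left(\sum_{T_1-\varepsilon \leq k\tau \leq T_2+\varepsilon} \tau u_k\right) + \frac{C m (T+\varepsilon)}{\varepsilon}\, t_0^m.
\end{equation*}
Taking the $\liminf$ in $N\to\infty$, $\lambda\to 0$ then passes from the discrete sums to $L^{\beta m}_{T_1,T_2}^{\,m}$ and $L^m_{T_1-\varepsilon,T_2+\varepsilon}^{\,m}$; using the subadditivity $(x+y)^{1/m} \leq x^{1/m}+y^{1/m}$ and raising to the power $1/m$ yields the announced inequality with $\max(L^m_{T_1-\varepsilon,T_2+\varepsilon}, t_0)$ on the right-hand side. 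The principal technical obstacle is the discrete reverse Jensen inequality under the almost-convexity condition: it is what forces the smallness condition $\varepsilon \leq C_1/m$ and introduces the additional power of $m$ that appears in the constant $C_2 m^3/\varepsilon$ of the statement.
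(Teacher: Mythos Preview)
Your proposal is correct and follows essentially the same route as the paper's proof: multiply the pointwise flow-interchange inequality by a cutoff and sum by parts, then apply the discrete reverse Jensen inequality (the paper's Lemma \ref{lemma_reverse_Jensen}) to the sequence $v_k=U_{\beta m}(\bar\rho^{N,\lambda}_{k\tau})$, whose almost-convexity comes from reapplying the flow-interchange corollaries at exponent $\beta m$. The only differences are cosmetic (the paper uses a cutoff equal to $1$ on $[T_1-\varepsilon/3,T_2+\varepsilon/3]$ rather than $[T_1-\varepsilon/2,T_2+\varepsilon/2]$, states the reverse Jensen over the full enlarged interval rather than just the flanks, and takes the power $1/m$ before the $\liminf$ rather than after).
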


\noindent As pointed out in Remark \ref{mtom2}, in the case where Assumption \ref{assumption_strong_congestion} holds, we set $t_0 = 0$. 

\begin{proof}
Let us recall that in Corollary \ref{corollary_flow_interchange_strong_congestion} and Corollary \ref{corollary_flow_interchange_strong_congestion-variant} with Remark \ref{mtom2}, we have proved (if we explicit the dependence in $N$ and $\lambda$) that for any $N \geqslant 1$, $\lambda > 0$ and any $k \in \{ 1, 2, \ldots, N-1 \}$, one has
\begin{equation}
\label{equation_iteration_start_flow_interchange}
U_{\beta m}(\bar{\rho}_{k \tau}^{N, \lambda})^{1/ \beta} \leqslant C m^2 \left[ \frac{ U_m(\bar{\rho}^{N, \lambda}_{(k-1) \tau}) + U_m(\bar{\rho}^{N, \lambda}_{(k+1) \tau}) -2 U_m(\bar{\rho}^{N, \lambda}_{k \tau})}{\tau^2} + Cm^2 U_m(\bar{\rho}_{k \tau}^{N, \lambda}) \right] + C t_0^m.
\end{equation}
Let us take $\chi : \intervalleff{0}{T} \to \intervalleff{0}{1}$ a positive $C^{1,1}$ cutoff function such that $\chi(t) = 1$ if $t \in \intervalleff{T_1 - \varepsilon /3}{T_2 + \varepsilon / 3}$ and $\chi(t) = 0$ if $t \notin \intervalleff{T_1 - 2 \varepsilon /3}{T_2 + 2 \varepsilon /3}$. Such a function $\chi$ can be chosen with $\| \chi'' \|_{\infty} \leqslant 54/ \varepsilon^2$. We multiply \eqref{equation_iteration_start_flow_interchange} by $\tau \chi(k \tau)$ and sum over $k \in \{ 1, 2, \ldots, N-1 \}$. After performing a discrete integration by parts, we are left with 
\begin{equation*}
\sum_{k=1}^{N-1} \tau \chi(k \tau) U_{m \beta}(\bar{\rho}^{N, \lambda}_{k \tau})^{1/ \beta} \leqslant C t_0^m + C m^2 \sum_{k=1}^{N-1} \tau U_m(\bar{\rho}^{N, \lambda}_{k \tau}) \left[ Cm^2 + \frac{\chi((k+1) \tau) + \chi((k-1) \tau) - 2 \chi(k \tau)}{\tau^2} \right]. 
\end{equation*}  
Given the bound on the second derivative of $\chi$, and if $\tau \leqslant \varepsilon / 3$, we get 
\begin{equation*}
\sum_{T_1 - \varepsilon/3 \leqslant k \tau \leqslant T_2 + \varepsilon / 3} \tau U_{m \beta}(\bar{\rho}^{N, \lambda}_{k \tau})^{1/ \beta} \leqslant C t_0^m + C m^2  \left( m^2 + \frac{1}{\varepsilon^2} \right) \sum_{T_1 - \varepsilon \leqslant k \tau \leqslant T_2 - \varepsilon} \tau U_m(\bar{\rho}^{N, \lambda}_{k \tau}). 
\end{equation*}
The l.h.s. is not exactly $\left( L^{m \beta}_{T_1, T_2} \right)^{1/m}$ as we would like to exchange the sum and the power $1/\beta$. Unfortunately, Jensen's inequality gives the inequality the other way around. To overcome this difficulty, we will use the fact that the function $k \mapsto U_{\beta m}(\bar{\rho}^{N, \lambda}_{k \tau})$ is almost a convex function of $k$. More precisely, we will use the "reverse Jensen inequality", whose proof is postponed in Appendix \ref{appendix}.

\begin{lm}
\label{lemma_reverse_Jensen}
Let $(u^\tau_k)_{k \in \Z}$ be a family of real sequences indexed by a parameter $\tau$. We assume that there exists $\omega \geqslant 0$ such that for any $k \in \Z$ and any $\tau$, one has $u^\tau_k > 0$ and 
\begin{equation}
\label{equation_almost_convex}
\frac{u^\tau_{k+1} + u^\tau_{k-1} - 2 u^\tau_{k}}{\tau^2} + \omega^2 u^\tau_k \geqslant 0. 
\end{equation}
Then, for any $T_1 < T_2$ and any $\eta < \pi / (8 \omega)$, there exists $\tau_0$ (which depends on $\omega$), such that, if $\tau \leqslant \tau_0$, then  
\begin{equation*}
\left( \sum_{T_1 \leqslant k \tau \leqslant T_2} \tau u_k^\tau \right)^{1/\beta} \leqslant C \frac{(\omega + 1)(T_2 - T_1 + 1)^{1+1/\beta}}{\eta} \ \sum_{T_1 - \eta  \leqslant k \tau \leqslant T_2 + \eta} \tau (u_k^\tau)^{1/ \beta},
\end{equation*}  
where $C$ is a universal constant.  
\end{lm}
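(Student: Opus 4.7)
The plan is to mimic the convex-case ($\omega = 0$) argument outlined in Subsection 1.1. Recall that in the convex case one uses that the maximum $M$ of a convex positive function $u$ over $[T_1,T_2]$ is attained at one of the endpoints $T_1$ or $T_2$, whence by monotonicity $u \geq M$ throughout the adjacent $\eta$-strip, yielding $\eta M^{1/\beta} \leq \int_{\text{bdry}} u^{1/\beta}$ and hence $(\int_{T_1}^{T_2} u)^{1/\beta} \leq M^{1/\beta}(T_2-T_1)^{1/\beta}$. The extension to $\omega > 0$ amounts to replacing linear interpolation by the discrete sine interpolation associated with the discrete Helmholtz-type operator $L_\tau u_k := (u_{k+1}-2u_k+u_{k-1})/\tau^2 + \omega^2 u_k$, whose nonnegativity is exactly the hypothesis \eqref{equation_almost_convex}.

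The first step is to establish a discrete maximum/comparison principle: if $L_\tau u_k \geq 0$ on $\{j_1, j_1+1, \ldots, j_2\}$ and $\phi_k$ denotes the discrete sine interpolation satisfying $L_\tau \phi_k = 0$ with $\phi_{j_1} = u_{j_1}$ and $\phi_{j_2} = u_{j_2}$ (explicitly built from $k \mapsto \cos(\lambda k)$ and $k \mapsto \sin(\lambda k)$ where $\cos \lambda = 1 - \omega^2 \tau^2/2$), then $u_k \leq \phi_k$ on the whole range, provided $\lambda(j_2-j_1) < \pi$. This follows from the standard argument: $w_k := u_k - \phi_k$ satisfies $L_\tau w_k \geq 0$ with $w_{j_1} = w_{j_2} = 0$, and the assumption $\lambda(j_2-j_1) < \pi$ ensures that the discrete Dirichlet problem for $L_\tau$ on this range is coercive, so $w_k \leq 0$. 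The hypothesis $\eta < \pi/(8\omega)$ guarantees that this comparison is available on any interval of length up to $\sim \eta$ (and even up to $\sim \pi/(2\omega)$), with an interpolating function bounded by an absolute multiple of $\max(u_{j_1}, u_{j_2})$.

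The second step is to use this comparison to transfer the maximum of $u$ over $[T_1, T_2]$ to values in the boundary strips. As in the convex case, once we know $\max_{[T_1,T_2]} u \leq C_1 \max_{[T_1-\eta, T_1]\cup[T_2,T_2+\eta]} u_k =: C_1 M_{\text{bdry}}$, another application of the local comparison shows that $u_k \geq c M_{\text{bdry}}$ on a whole $\eta$-subinterval of one of the boundary strips (because near any point attaining the boundary maximum, the almost-convex function cannot drop by more than a factor $\cos(\omega\eta) \geq \cos(\pi/8)$ within distance $\eta$). Combining these, $\eta (cM_{\text{bdry}})^{1/\beta} \leq \sum_{\text{bdry}} \tau u_k^{1/\beta}$, and since $\sum_{T_1 \leq k\tau \leq T_2} \tau u_k \leq (T_2-T_1 + \tau) M_{\text{bdry}}$, taking $1/\beta$-powers and combining yields the stated inequality up to the constant.

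The main obstacle is the propagation step: a naive iteration of the short-range local comparison ($u_{k^*} \leq \max(u_{k^*-n}, u_{k^*+n})/\cos(\lambda n)$ over $n\tau \leq \eta$) would compound to an \emph{exponential} factor in $(T_2-T_1)/\eta$, whereas the target bound is polynomial. To overcome this, the strategy is to use long-range comparisons whenever possible: partition $[T_1-\eta, T_2+\eta]$ into $L \lesssim 1 + \omega(T_2-T_1)$ subintervals of length comparable to $\pi/(4\omega)$ (still below the critical length $\pi/\omega$), and on each apply the comparison in one step with only an $O(1)$ multiplicative constant. This reduces the problem to comparing values at $L+1$ node points, which is then bounded by the boundary strip values through at most two further applications of the short-range comparison. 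The accumulated constant is $O(1)^L = O(1)^{\omega(T_2-T_1)+1}$, which after optimizing the subinterval length (and passing the short-range comparison through the telescoping structure in $k$) finally yields the polynomial factor $(\omega+1)(T_2-T_1+1)^{1+1/\beta}/\eta$ claimed in the statement.
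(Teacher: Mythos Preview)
Your setup is essentially right: the comparison principle with the discrete cosine kernel and the resulting short-range Harnack inequality (sup over an interval of length $<\pi/(8\omega)$ controlled by the infimum over an adjacent $\eta$-strip) are exactly what the paper uses. The gap is in the propagation step. You aim to bound $\max_{[T_1,T_2]} u$ by values in the two \emph{boundary} $\eta$-strips only, and you correctly observe that chaining short-range comparisons across $L\sim\omega(T_2-T_1)$ subintervals accumulates a factor $O(1)^L$. Your last sentence then asserts that this ``finally yields the polynomial factor'', but $O(1)^L$ is genuinely exponential in $\omega(T_2-T_1)$, and no optimisation of the subinterval length can repair this; the claim is unjustified.

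The paper avoids propagation altogether. It partitions $[T_1,T_2]$ into $M\sim 1+\omega(T_2-T_1)$ pieces $I_i$ and applies the Harnack inequality \emph{separately} on each $I_i$, bounding $\sup_{I_i} u$ by $C\min$ over the two $\eta$-strips \emph{adjacent to $I_i$} (these strips lie inside $[T_1-\eta,T_2+\eta]$ but are mostly interior, not at the global boundary). This gives
\[
\Bigl(\sum_{k\tau\in I_i}\tau u_k\Bigr)^{1/\beta}\le \frac{C|I_i|^{1/\beta}}{\eta}\sum_{k\tau\in I_i\pm\eta}\tau (u_k)^{1/\beta}.
\]
Now one simply sums over $i$: the left side combines via $(\sum_i a_i)^{1/\beta}\le M^{1/\beta}\sum_i a_i^{1/\beta}$, and on the right the sums over overlapping $I_i\pm\eta$ are crudely bounded by $M$ times the full sum over $[T_1-\eta,T_2+\eta]$. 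The constant is then $M^{1/\beta}\cdot M\cdot|I_i|^{1/\beta}/\eta\sim M(T_2-T_1)^{1/\beta}/\eta$, which is the polynomial $(\omega+1)(T_2-T_1+1)^{1+1/\beta}/\eta$. The point is that no Harnack constant is ever iterated; the factor $M$ enters only \emph{additively} through the number of pieces in the sum.
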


\noindent To use this lemma, we observe that $u^\tau_k := U_{m \beta}(\bar{\rho}^{N, \lambda}_{k \tau})$ satisfies \eqref{equation_almost_convex} with $\omega^2 = Cm^2$ (thanks again to Corollary \ref{corollary_flow_interchange_strong_congestion} and Remark \ref{mtom2}). Thus, if we take $C_1$ small enough, we have $\varepsilon /3 < \pi / (8 \omega)$ as soon as $\varepsilon \leqslant C_1 / m$. If $\tau$ is small enough, we can exchange the sum and the power $1/\beta$ to get 
\begin{align*}
\left( \sum_{T_1 \leqslant k \tau \leqslant T_2} \tau U_{m \beta}(\bar{\rho}^{N, \lambda}_{k \tau}) \right)^{1/ \beta} & \leqslant C \frac{(C m + 1) (T+1)^{1+1/\beta}}{\varepsilon} \sum_{T_1 - \varepsilon/3 \leqslant k \tau \leqslant T_2 + \varepsilon / 3} \tau U_{m \beta}(\bar{\rho}^{N, \lambda}_{k \tau})^{1/ \beta} \\
& \leqslant C_2 \frac{m^3}{\varepsilon} \left( m^2 + \frac{1}{\varepsilon^2} \right) \left( t_0 + \sum_{T_1 - \varepsilon \leqslant k \tau \leqslant T_2 + \varepsilon} \tau U_m(\bar{\rho}^{N, \lambda}_{k \tau}) \right).
\end{align*}
Notice that we have put the constant $C_2 m^3 \varepsilon^{-1} (m^2 + \varepsilon^{-2})$ also in factor of $t_0^m$, as it is anyway larger than $1$ as soon as $\varepsilon$ is small enough. Then we take the power $1/m$ on both sides, use the identity $(a+b)^{1/m} \leqslant C \max (a^{1/m}, b^{1/m})$ and send $N \to + \infty$ and $\lambda \to 0$ to get the result.
\end{proof}

\noindent In other words, on a slightly larger time interval, the $L^{\beta m}$ norm is control by the $L^m$ norm. We just have to iterate this inequality. 

\begin{prop}
\label{proposition_conclusion_iteration_flow_interchange}
Suppose that either Assumption \ref{assumption_strong_congestion} holds or Assumptions \ref{assumption_strong_congestion-variant} and \ref{assumption_potential} both hold. For any $0 < T_1 < T_2 < T$, there exists $C$ (that depends on $T_1, T_2, T, f, V$ and $\Omega$) such that 
\begin{equation*}
\limsup_{m \to + \infty} L^m_{T_1, T_2}  \leqslant C \max \left( L^{\alpha + 2}_{0,T}, t_0 \right).
\end{equation*}
\end{prop}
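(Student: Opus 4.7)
The proof is a Moser-style iteration of Proposition \ref{proposition_one_iteration_moser_like}. I would pick geometric sequences $m_n := \beta^n m_0$ with $m_0 := \alpha + 2$, and $\varepsilon_n := c \beta^{-n}/m_0$, with $c \in (0, C_1]$ chosen small enough that $\sum_n \varepsilon_n = c / (m_0 (1 - 1/\beta))$ is strictly less than $\min(T_1, T - T_2)$. The bound $c \leqslant C_1$ guarantees $\varepsilon_n \leqslant C_1/m_n$, so Proposition \ref{proposition_one_iteration_moser_like} is applicable at every step. Define the nested intervals $I_n := [T_1 - \sigma_n, T_2 + \sigma_n]$ with $\sigma_n := \sum_{k \geqslant n} \varepsilon_k$: then $I_0 \subset (0, T)$, $I_{n+1} \subset I_n$ with $I_n = [T_1 - \sigma_{n+1} - \varepsilon_n, T_2 + \sigma_{n+1} + \varepsilon_n]$, and $\sigma_n \to 0$, so $I_n \downarrow [T_1, T_2]$.

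Applying Proposition \ref{proposition_one_iteration_moser_like} with interval $I_{n+1}$ and enlargement $\varepsilon_n$ yields
\begin{equation*}
L^{m_{n+1}}_{I_{n+1}} \leqslant A_n^{1/m_n} \max(L^{m_n}_{I_n}, t_0), \qquad A_n := C_2 \frac{m_n^3}{\varepsilon_n} \left( m_n^2 + \frac{1}{\varepsilon_n^2} \right) = C' m_n^6,
\end{equation*}
for a constant $C'$ depending on $c, C_2, m_0$. Setting $b_n := \max(L^{m_n}_{I_n}, t_0)$ and observing that for $n$ large $A_n^{1/m_n} \geqslant 1$, so that $b_{n+1} \leqslant A_n^{1/m_n} b_n$, the iteration gives
\begin{equation*}
b_N \leqslant \left( \prod_{n=0}^{N-1} A_n^{1/m_n} \right) b_0.
\end{equation*}
The decisive point is that this infinite product converges: $\log \prod_n A_n^{1/m_n} = \sum_n m_n^{-1} (\log C' + 6 \log m_n)$, and since $m_n = \beta^n m_0$ grows geometrically while the prefactor is polynomial, both $\sum_n \beta^{-n}$ and $\sum_n n \beta^{-n}$ are finite. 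Denote the limit by $K < \infty$; it depends on $T_1, T_2, T, f, V, \Omega$ through $c, C', m_0$.

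Since $[T_1, T_2] \subset I_N$ and $I_0 \subset (0, T)$, monotonicity of $L^m_{\cdot, \cdot}$ with respect to the time interval gives $L^{m_N}_{T_1, T_2} \leqslant L^{m_N}_{I_N} \leqslant b_N \leqslant K \max(L^{m_0}_{I_0}, t_0) \leqslant K \max(L^{\alpha + 2}_{0, T}, t_0)$ for every $N$. To upgrade this from the geometric subsequence $(m_N)$ to the full $\limsup_{m \to \infty}$, I would observe that $(m(m-1))^{1/m} L^m_{T_1, T_2}$ is essentially the $L^m$-norm of the discrete family $\bar\rho^{N, \lambda}$ on the probability space $[T_1, T_2] \times \Omega$, hence nondecreasing in $m$ by Jensen; combined with $(m(m-1))^{1/m} \to 1$, this yields $\limsup_{m \to \infty} L^m_{T_1, T_2} = \limsup_N L^{m_N}_{T_1, T_2}$, proving the claim. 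The main obstacle in setting up this iteration is the simultaneous tuning of the three scales $(m_n, \varepsilon_n, \sum_n \varepsilon_n)$: the $\varepsilon_n$ must decay fast enough to preserve the constraint $\varepsilon_n \leqslant C_1/m_n$, their sum must fit inside the available margin $\min(T_1, T-T_2)$, and the constants $A_n^{1/m_n}$ must combine into a convergent infinite product. That the latter holds at all is a direct consequence of the merely polynomial growth of $A_n$ in $m_n$ inherited from Proposition \ref{proposition_one_iteration_moser_like}; a super-polynomial dependence would destroy summability and the iteration would fail to close.
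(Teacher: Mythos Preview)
Your proof is correct and follows essentially the same Moser iteration as the paper: the same geometric sequences $m_n = (\alpha+2)\beta^n$ and $\varepsilon_n \sim \beta^{-n}$, the same nested intervals shrinking to $[T_1,T_2]$, the same convergence of the infinite product via the polynomial growth of the constants in Proposition~\ref{proposition_one_iteration_moser_like}, and the same Jensen argument to pass from the subsequence $(m_n)$ to the full $\limsup_{m\to\infty}$. One small point worth tightening: to ensure $b_{n+1}\leqslant A_n^{1/m_n}b_n$ you need $A_n^{1/m_n}\geqslant 1$ for \emph{all} $n$ (not just large $n$), which the paper handles by replacing $A_n$ with $\max(1,A_n)$; this is harmless since it only enlarges the product.
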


\begin{proof}
Let $\varepsilon_0 > 0$ be small enough such that $0 < T_1 - \varepsilon_0 \beta / (\beta - 1) \leqslant T_2 + \varepsilon_0 \beta / (\beta - 1) < 1 $ and $\varepsilon_0 \leqslant C_1 / (\alpha + 2)$ (where $C_1$ is the constant defined in Proposition \ref{proposition_one_iteration_moser_like}). For any $n \in \N$, let us define
\begin{equation*}
T^n_1 := T_1 - \sum_{k=n}^{+ \infty} \frac{\varepsilon_0}{\beta^n} \ \text{ and } \ T^n_2 := T_2 + \sum_{k=n}^{+ \infty} \frac{\varepsilon_0}{\beta^n},
\end{equation*} 
and set $m_n := (\alpha + 2) \beta^n$. Using Proposition \ref{proposition_one_iteration_moser_like}, as we have $|T^{n+1}_i - T^n_i| = \varepsilon_0 \beta^{-n} \leqslant C_1 / m_n$ for $i \in \{ 1,2 \}$, we can say that, with $l_n := \max \left( L^{m_n}_{T^n_1, T^n_2}, t_0 \right)$ 
\begin{align*}
l_{n+1} & \leqslant \left[ \max\left\{1,C_2 \frac{m_n^3}{\varepsilon_0 \beta^{-n}} \left( m_n^2 + \frac{1}{(\varepsilon_0 \beta ^{-n})^2} \right) \right\}\right]^{1/m_n} l_n  \\
& \leqslant \left[ C \beta^{6n} \right]^{\beta^{-n}/(\alpha + 2)} l_n. 
\end{align*}    
One can easily check, as $\beta > 1$, that 
\begin{equation*}
\prod_{n=0}^{+ \infty} \left[ C \beta^{6n} \right]^{\beta^{-n}/(\alpha + 2)}  < + \infty,
\end{equation*}
thus we get that 
$$
\sup_{n \in \N} L^{m_n}_{T_1, T_2}  \leqslant \sup_{n \in \N} L^{m_n}_{T^n_1, T^n_2} \leqslant \sup_{n \in \N} l_n \leqslant C l_0 = C \max \left( L^{m_0}_{T_1^0, T_2^0}, t_0 \right) \leqslant C \max \left( L^{\alpha + 2}_{0,T}, t_0 \right). 
$$
To conclude, we notice that, if $m > 1$ and $m_n \geqslant m$, one has (using Jensen's inequality)
\begin{equation*}
L^m_{T_1, T_2} \leqslant \frac{(m_n(m_n -1))^{1/m_n}}{(m(m-1))^{1/m}} L^{m_n}_{T_1, T_2},
\end{equation*}
thus sending $m \to + \infty$ (hence $n \to + \infty$) we conclude that 
\begin{equation}
\label{equation_control_Lm_Lmn_aux}
\limsup_{m \to + \infty} L^m_{T_1, T_2} \leqslant \sup_{n \in \N} L^{m_n}_{T_1, T_2}. \qedhere 
\end{equation}
\end{proof}

\noindent As we will see later, the fact that $L^{\alpha + 2}_{0,T}$ is finite is a consequence of the fact that the solution $\bar{\rho}$ of the continuous problem \eqref{equation_continuous problem} satisfies $\int_0^T F(\bar{\rho}_t) \ddr t < + \infty$.

\subsection{Estimates up to the final time}

In this subsection, still supposing that either Assumption \ref{assumption_strong_congestion} holds or Assumptions \ref{assumption_strong_congestion-variant} and \ref{assumption_potential} both hold, we exploit Assumption \ref{assumption_temporal_bc} to extend the $L^\infty$ bound up to the final time $t=T$. We will prove a result similar to Proposition \ref{proposition_one_iteration_moser_like}, but this time up to the boundary. 

\begin{prop}
\label{proposition_one_iteration_boundary}
Suppose that either Assumption \ref{assumption_strong_congestion} holds or Assumptions \ref{assumption_strong_congestion-variant} and \ref{assumption_potential} both hold, and that Assumption \ref{assumption_temporal_bc} also holds. Then there exists two constants $C_1$ and $C_2$ (depending on $f, T, V, g, W$ and $\Omega$) such that for any $0 < \varepsilon < C_1 / m$ and any $0 < T_1 <T$ with $0 < T_1 - \varepsilon$, then  for any $m \geqslant \alpha + 2$, 
\begin{equation*}
L^{\beta m}_{T_1, T} \leqslant \left[ C_2 \frac{m^3}{\varepsilon} \left( \frac{m}{\varepsilon} + m^2 + \frac{1}{\varepsilon^2} \right) \right]^{1/m} \max \left( L^m_{T_1 - \varepsilon, T}, t_0 \right).
\end{equation*}
\end{prop}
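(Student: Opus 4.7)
The plan is to mirror closely the proof of Proposition \ref{proposition_one_iteration_moser_like}, but choose the cutoff function so that it does not vanish at $t = T$, and then use the boundary estimate of Proposition \ref{proposition_flow_interchange_boundary} to absorb the extra terms produced by the discrete integration by parts. Concretely, I would pick a $C^{1,1}$ cutoff $\chi : \intervalleff{0}{T} \to \intervalleff{0}{1}$ with $\chi \equiv 1$ on $\intervalleff{T_1 - \varepsilon/3}{T}$, $\chi \equiv 0$ on $\intervalleff{0}{T_1 - 2\varepsilon/3}$, and $\|\chi''\|_\infty \leqslant C/\varepsilon^2$. Multiplying \eqref{equation_iteration_start_flow_interchange} by $\tau \chi(k\tau)$ and summing over $k \in \{1, \ldots, N-1\}$, a double discrete integration by parts (Abel summation) transfers the second-order difference onto $\chi$, exactly as in the interior case, but produces an additional boundary term at $k = N$ of the form $\chi(T)\,[U_m(\bar{\rho}_T^{N,\lambda}) - U_m(\bar{\rho}_{(N-1)\tau}^{N,\lambda})]/\tau$.

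This extra boundary term is precisely what Proposition \ref{proposition_flow_interchange_boundary} controls: it is bounded above by $(m-1) \|\Delta W\|_\infty U_m(\bar{\rho}_T^{N,\lambda})$ (up to sign, depending on which side the Abel summation leaves it on). Absorbing this into a term of the form $(m/\varepsilon)\sum_{k} \tau U_m(\bar{\rho}_{k\tau}^{N,\lambda})$ on a slightly enlarged window $\intervalleff{T_1 - \varepsilon}{T}$ accounts for the new $m/\varepsilon$ summand inside the bracket of the claimed inequality. After this step one is left with
\begin{equation*}
\sum_{T_1 - \varepsilon/3 \leqslant k\tau \leqslant T} \tau \, U_{m\beta}(\bar{\rho}^{N,\lambda}_{k\tau})^{1/\beta} \leqslant C t_0^m + Cm^2\!\left(\frac{m}{\varepsilon} + m^2 + \frac{1}{\varepsilon^2}\right) \!\!\!\sum_{T_1 - \varepsilon \leqslant k\tau \leqslant T} \!\!\! \tau\, U_m(\bar{\rho}^{N,\lambda}_{k\tau}).
\end{equation*}

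The main obstacle, and the only genuinely new point compared to Proposition \ref{proposition_one_iteration_moser_like}, is to exchange the sum and the power $1/\beta$ on the left-hand side. The proof of Proposition \ref{proposition_one_iteration_moser_like} used Lemma \ref{lemma_reverse_Jensen}, which extends the sum by $\eta$ on \emph{both} sides of the interval; here we cannot extend past $T$. The strategy will be to establish (or invoke a one-sided variant of) the reverse Jensen inequality: the sequence $u^\tau_k := U_{m\beta}(\bar{\rho}^{N,\lambda}_{k\tau})$ still satisfies \eqref{equation_almost_convex} with $\omega^2 = Cm^2$ on interior indices, and Proposition \ref{proposition_flow_interchange_boundary} applied at the level $m\beta$ gives a discrete one-sided Neumann bound $(u^\tau_N - u^\tau_{N-1})/\tau \geqslant -C m\, u^\tau_N$ at the right endpoint. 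Together these two facts produce, by the same sinusoidal comparison argument as in the proof of Lemma \ref{lemma_reverse_Jensen} (extending the sequence across $T$ by even reflection, so that the one-sided Neumann condition becomes an almost-convexity inequality on the extended sequence), a one-sided reverse Jensen estimate valid on $\intervalleff{T_1}{T}$ with enlargement only on the left.

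Once this one-sided reverse Jensen is in hand, the same manipulations as in Proposition \ref{proposition_one_iteration_moser_like} apply: we exchange the power $1/\beta$ with the sum at the cost of the factor $C(\omega+1)(T+1)^{1+1/\beta}/\varepsilon$ (which is $\leqslant Cm/\varepsilon$), we take the power $1/m$ using $(a+b)^{1/m} \leqslant C \max(a^{1/m}, b^{1/m})$, and we let $N \to +\infty$ and $\lambda \to 0$. Combining these constants with the factor $Cm^2(m/\varepsilon + m^2 + 1/\varepsilon^2)$ from the flow interchange step yields exactly the stated bound $\bigl[C_2 m^3 \varepsilon^{-1}(m/\varepsilon + m^2 + \varepsilon^{-2})\bigr]^{1/m}$.
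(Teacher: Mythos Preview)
Your overall architecture is exactly the paper's: same cutoff $\chi$ supported on $[T_1-2\varepsilon/3,T]$ and equal to $1$ on $[T_1-\varepsilon/3,T]$, same discrete integration by parts producing the boundary term $\chi(T)\,[U_m(\bar\rho_T)-U_m(\bar\rho_{T-\tau})]/\tau$, same use of Proposition~\ref{proposition_flow_interchange_boundary} to bound it by $Cm\,U_m(\bar\rho_T)$, and the same final manipulations. The paper, however, handles the two points you flag by invoking Lemma~\ref{lemma_reverse_Jensen_Neumann}, which packages both \eqref{equation_reverse_Jensen_boundary_control} (bounding $u_N^\tau$ by its average on $[T-\eta,T]$) and \eqref{equation_reverse_Jensen_boundary_inverse} (the one-sided reverse Jensen). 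Your proposal glosses over the first of these: you write ``absorbing this into a term of the form $(m/\varepsilon)\sum_k\tau U_m$'', but the inequality $U_m(\bar\rho_T)\leqslant (C/\varepsilon)\sum_{T-\varepsilon\leqslant k\tau\leqslant T}\tau\,U_m(\bar\rho_{k\tau})$ is not automatic---it is itself a Harnack-type estimate requiring the almost-convexity \emph{together with} the Neumann bound at $k=N$, and is precisely \eqref{equation_reverse_Jensen_boundary_control}.

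More seriously, your proposed mechanism for the one-sided reverse Jensen---even reflection across $k=N$---does not work as stated. With $u_{N+1}^\tau:=u_{N-1}^\tau$, the second difference at $k=N$ becomes
\[
\frac{u_{N+1}^\tau+u_{N-1}^\tau-2u_N^\tau}{\tau^2}=\frac{2(u_{N-1}^\tau-u_N^\tau)}{\tau^2}\geqslant -\frac{2b}{\tau}\,u_N^\tau,
\]
so to recover \eqref{equation_almost_convex} at $k=N$ you would need $\omega^2\geqslant 2b/\tau$, i.e.\ $m\tau\gtrsim 1$, which fails in the limit $\tau\to 0$. The paper avoids this by building the Neumann condition directly into the sinusoidal comparison function in Lemma~\ref{lemma_comparision_u_v_Neumann} (choosing $v\in\mathcal T^\tau$ with $(v_N-v_{N-1})/\tau=bv_N$) rather than reflecting the sequence; this is what makes Lemma~\ref{lemma_reverse_Jensen_Neumann} go through. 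Once you replace the reflection idea by that lemma, your argument is complete and coincides with the paper's.
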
 

\noindent Again, we recall (Remark \ref{mtom2}) that if we are under Assumption \ref{assumption_strong_congestion}, we take $t_0 = 0$. 

\begin{proof}
Let us recall that equation \eqref{equation_iteration_start_flow_interchange} holds for any $N \geqslant 1$, $\lambda > 0$ and $k \in \{ 1,2, \ldots, N-1 \}$. We take $\chi : \intervalleff{0}{T} \to \intervalleff{0}{1}$ a positive $C^{1,1}$ cutoff function such that $\chi(t) = 1$ if $t \in \intervalleff{T_1 - \varepsilon /3}{T}$ and $\chi(t) = 0$ if $t \in \intervalleff{0}{T_1 - 2 \varepsilon /3}$. Such a function $\chi$ can be chosen with $\| \chi'' \|_{\infty} \leqslant 54/ \varepsilon^2$. We multiply \eqref{equation_iteration_start_flow_interchange} by $\tau \chi(k \tau)$ and sum over $k \in \{ 1, 2, \ldots, N-1 \}$. After performing a discrete integration by parts, we are left with (now a boundary term is appearing): 
\begin{multline*}
\sum_{k=1}^{N-1} \tau \chi(k \tau) U_{m \beta}(\bar{\rho}^{N, \lambda}_{k \tau})^{1/ \beta}\! \leqslant \\ C m^2 \Bigg( \frac{U_m(\bar{\rho}^{N, \lambda}_{T})\! -\! U_m(\bar{\rho}^{N, \lambda}_{T - \tau} )}{\tau} \chi(T)   +\! \sum_{k=1}^{N-1} \!\tau U_m(\bar{\rho}^{N, \lambda}_{k \tau})\! \left[\! Cm^2 + \frac{\chi(k\tau+ \tau) \!+\! \chi(k\tau-\tau) \!-\! 2 \chi(k \tau)}{\tau^2}\! \right] \! \Bigg) + C t_0^m. 
\end{multline*} 
With the help of Proposition \ref{proposition_flow_interchange_boundary} and Corollary \ref{corollary_flow_interchange_strong_congestion} or Corollary \ref{corollary_flow_interchange_strong_congestion-variant}, and as $\chi(T) = 1$, we are able to write (provided that $\tau \leqslant \varepsilon / 3$)
\begin{equation*}
\sum_{T_1 - \varepsilon /3 \leqslant k \tau \leqslant T} \tau U_{m \beta}(\bar{\rho}^{N, \lambda}_{k \tau})^{1/ \beta} \leqslant C m^2 \left( m U_m(\bar{\rho}^{N, \lambda}_T)  + \left[ m^2 + \frac{1}{\varepsilon^2} \right] \sum_{T_1 - \varepsilon \leqslant k \tau \leqslant T} \tau U_m(\bar{\rho}^{N, \lambda}_{k \tau})   \right) + C t_0^m. 
\end{equation*} 
To transform the boundary term $U_m(\bar{\rho}^{N, \lambda}_T)$ into an integral term, we use the following lemma, whose proof is also postponed in Appendix \ref{appendix}. 

\begin{lm}
\label{lemma_reverse_Jensen_Neumann}
Let $(u^\tau_k)_{k \in \Z}$ be a family of real sequences indexed by a parameter $\tau$. We assume that there exists $\omega \geqslant 0$ such that for any $k \in \Z$ and any $\tau$, one has $u^\tau_k > 0$ and \eqref{equation_almost_convex}. We also assume that there exists $b \geqslant 0$ such that for some $N \in \Z$,
\begin{equation*}
\frac{u_{N}^\tau - u^\tau_{N-1}}{\tau} \leqslant  b u_N\tau.  
\end{equation*} 
Then, there exists $C_1$ and $C_2$ universal constants and $\tau_0$ (which depends on $\omega$ and $b$), such that for any $\eta \leqslant \min\{ \pi / (32 \omega), \pi / (32b) \}$ and any $\tau \leqslant \tau_0$, then 
\begin{equation}
\label{equation_reverse_Jensen_boundary_control}
u^\tau_N \leqslant \frac{C_1}{\eta} \sum_{kN - \eta \leqslant k \tau \leqslant kN} \tau u^\tau_k,
\end{equation}
and for any $T_1 < N \tau$, 
\begin{equation}
\label{equation_reverse_Jensen_boundary_inverse}
\left( \sum_{T_1 \leqslant k \tau \leqslant N \tau} \tau u_k^\tau \right)^{1/\beta} \leqslant C_2 \frac{(\omega  + 1)(T - T_1 + 1)^{1+1/\beta}}{\eta} \ \sum_{T_1 - \eta  \leqslant k \tau \leqslant N \tau} \tau (u_k^\tau)^{1/ \beta}.
\end{equation}
\end{lm}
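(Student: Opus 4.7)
The two inequalities will be proved in succession. Inequality \eqref{equation_reverse_Jensen_boundary_control} is essentially a telescoping argument from the discrete almost-convex inequality and the Neumann-type bound at $N$. Inequality \eqref{equation_reverse_Jensen_boundary_inverse} will then be obtained by adapting the proof of Lemma \ref{lemma_reverse_Jensen} (which we may assume established) to the one-sided boundary situation, using \eqref{equation_reverse_Jensen_boundary_control} to replace the missing right-strip contribution.

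For \eqref{equation_reverse_Jensen_boundary_control}, set $d_k := (u_k^\tau - u_{k-1}^\tau)/\tau$. The almost-convex hypothesis \eqref{equation_almost_convex} rewrites as $d_{k+1} - d_k \geq -\omega^2 \tau u_k^\tau$, so summing from $k$ to $N-1$ and invoking $d_N \leq b u_N^\tau$ yields, for every $k < N$,
\begin{equation*}
d_k \leq b u_N^\tau + \omega^2 \tau \sum_{j=k}^{N-1} u_j^\tau.
\end{equation*}
Summing these inequalities from $k_0 < N$ up to $N$ and using $(N - k_0)\tau \leq \eta$, the resulting double sum gives
\begin{equation*}
u_N^\tau - u_{k_0}^\tau \leq b \eta \, u_N^\tau + \omega^2 \eta \sum_{k_0 \leq k \leq N} \tau u_k^\tau.
\end{equation*}
Multiplying by $\tau$ and summing $k_0$ over $\{N - \lfloor \eta/\tau \rfloor, \ldots, N-1\}$, the $u_{k_0}^\tau$ terms reconstruct (up to constants) a sum of the same form as the right-hand side of \eqref{equation_reverse_Jensen_boundary_control}, and one gets
\begin{equation*}
u_N^\tau \bigl(\eta - O(b \eta^2) - O(\omega^2 \eta^3)\bigr) \leq C \sum_{N - \eta/\tau \leq k \leq N} \tau u_k^\tau.
\end{equation*}
Under the smallness assumption $\eta \leq \min\{\pi/(32\omega), \pi/(32b)\}$, the bracketed coefficient is at least $\eta/2$, which gives \eqref{equation_reverse_Jensen_boundary_control} with a universal constant.

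For \eqref{equation_reverse_Jensen_boundary_inverse}, the idea is to mimic the proof of Lemma \ref{lemma_reverse_Jensen}: that proof produces a pointwise bound of the form $u_k^\tau \lesssim u_{k_L}^\tau + u_{k_R}^\tau$ for some $k_L, k_R$ in the two boundary strips, from which the reverse Jensen step is performed after taking $1/\beta$ powers. In our one-sided setting there is no right strip, so I would replace the role of $k_R$ by the single value $u_N^\tau$ and then, after summing over $k$ and taking the $1/\beta$-th power, control the extra term $(N\tau - T_1)^{1/\beta}(u_N^\tau)^{1/\beta}$ by \eqref{equation_reverse_Jensen_boundary_control}. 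More concretely, \eqref{equation_reverse_Jensen_boundary_control} combined with the concavity of $x \mapsto x^{1/\beta}$ gives
\begin{equation*}
(u_N^\tau)^{1/\beta} \leq \left(\frac{C_1}{\eta} \sum_{N\tau - \eta \leq k\tau \leq N\tau} \tau u_k^\tau\right)^{1/\beta} \leq \frac{C_1^{1/\beta}}{\eta^{1/\beta}} \cdot \eta^{1/\beta - 1} \sum_{N\tau - \eta \leq k\tau \leq N\tau} \tau (u_k^\tau)^{1/\beta},
\end{equation*}
where I used Jensen's inequality on the normalized measure $\tau/\eta$ (this time in the standard direction, since $1/\beta < 1$). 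Plugging this in finishes the proof and yields a constant of the form $C(\omega+1)(T - T_1 + 1)^{1+1/\beta}/\eta$ as claimed, after absorbing the factors coming from the left-strip part inherited from Lemma \ref{lemma_reverse_Jensen}.

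The main obstacle I anticipate is justifying that the pointwise bound ``$u_k^\tau \lesssim u_{k_L}^\tau + u_{k_R}^\tau$'' underlying the proof of Lemma \ref{lemma_reverse_Jensen} can be legitimately rewritten, in the one-sided setting, as ``$u_k^\tau \lesssim u_{k_L}^\tau + u_N^\tau$'' for some $k_L$ on the left strip. Equivalently, one must show that an almost-convex positive sequence on $\intervalleff{T_1 - \eta}{N\tau}$ with the Neumann-type estimate $d_N \leq b u_N^\tau$ attains its sup on $\intervalleff{T_1}{N\tau}$ essentially either near the left strip or near $N$; this is the discrete analog of the maximum principle for the second-order operator $u \mapsto -u'' - \omega^2 u$ with Neumann-type conditions at $N\tau$ and is exactly where the smallness conditions $\eta \omega \leq \pi/32$ and $\eta b \leq \pi/32$ are indispensable. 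Once that maximum-principle-type statement is in hand, the rest is bookkeeping, with the polynomial dependence of the constant in $(\omega+1)/\eta$ being identical in structure to the one already appearing in Lemma \ref{lemma_reverse_Jensen}.
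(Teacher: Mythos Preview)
Your argument for \eqref{equation_reverse_Jensen_boundary_control} is correct and genuinely more elementary than the paper's. The paper deduces it from a Harnack-type inequality (Lemma~\ref{lemma_Harnack_Neumann}) proved by comparison with explicit trigonometric subsolutions $v_k = A\cos(2\omega k\tau + \delta)$, whereas your telescoping of $d_k := (u_k^\tau - u_{k-1}^\tau)/\tau$ from the Neumann index downward gives the average bound directly. This is a nice simplification for the first inequality.

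For \eqref{equation_reverse_Jensen_boundary_inverse} there is a real gap, and it is precisely the one you flag. Your telescoping controls $u_N^\tau$ by an average on a left strip, but it does \emph{not} control $\sup_{k_1 \leq k \leq N} u_k^\tau$ by values to the left of $k_1$, and that is what one needs to exchange the sum and the power $1/\beta$ near the Neumann end. Moreover, your description of the proof of Lemma~\ref{lemma_reverse_Jensen} as giving a single bound $u_k^\tau \lesssim u_{k_L}^\tau + u_{k_R}^\tau$ with $k_L, k_R$ in the two global boundary strips is not accurate: that proof cuts $\intervalleff{T_1}{T_2}$ into $M \sim (\omega+1)(T_2 - T_1)$ subintervals of length at most $\pi/(8\omega)$ and applies a local Harnack inequality on each, with $k_L, k_R$ in the \emph{adjacent} strips. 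Replacing $k_R$ by $N$ globally, as you propose, would fail as soon as $N\tau - T_1 \gg 1/\omega$.

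The paper's route is to isolate a single rightmost subinterval $\intervalleff{k_1\tau}{N\tau}$ of length at most $\min\{\pi/(32\omega),\pi/(32b)\}$, prove on it the Harnack bound $\sup_{k_1 \leq k \leq N} u_k^\tau \leq C \inf_{k_0 \leq k \leq k_1} u_k^\tau$ via the trigonometric comparison (Lemma~\ref{lemma_Harnack_Neumann}), and then invoke the already-established interior Lemma~\ref{lemma_reverse_Jensen} on $\intervalleff{T_1}{k_1\tau}$. The trigonometric comparison is exactly the ``discrete maximum principle for $-u'' - \omega^2 u$ with Neumann data at $N\tau$'' that you correctly identify as missing; it does not follow from your summation argument and requires a separate proof.
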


\noindent We are in the case where this lemma can be applied because of Corollary \ref{corollary_flow_interchange_strong_congestion} or Corollary \ref{corollary_flow_interchange_strong_congestion-variant} and Proposition \ref{proposition_flow_interchange_boundary} with $u^\tau_k = U_m(\bar{\rho}^{N, \lambda}_{k \tau})$, $\omega = C m$ and $b = C m$. Thus, if $\varepsilon < C / m$, we can guarantee that we can use equation \eqref{equation_reverse_Jensen_boundary_control} of Lemma \ref{lemma_reverse_Jensen_Neumann} (with $\varepsilon = \eta$), thus
\begin{equation*}
\sum_{T_1 - \varepsilon /3 \leqslant k \tau \leqslant T} \tau U_{m \beta}(\bar{\rho}^{N, \lambda}_{k \tau})^{1/ \beta} \leqslant C t_0^m + C m^2 \left[ \frac{m}{\varepsilon} + m^2 + \frac{1}{\varepsilon^2} \right] \sum_{T_1 - \varepsilon \leqslant k \tau \leqslant T} \tau U_m(\bar{\rho}^{N, \lambda}_{k \tau}). 
\end{equation*}
Then we use equation \eqref{equation_reverse_Jensen_boundary_inverse} of Lemma \ref{lemma_reverse_Jensen_Neumann} (but this time with $u^\tau_k = U_{\beta m}(\bar{\rho}^{N, \lambda}_{k \tau})$) to exchange the sum and the power $1/\beta$ on the l.h.s. to conclude that 
\begin{equation*}
\left( \sum_{T_1  \leqslant k \tau \leqslant T} \tau U_{m \beta}(\bar{\rho}^{N, \lambda}_{k \tau}) \right)^{1 / \beta} \leqslant C \frac{m^3}{\varepsilon} \left[ \frac{m}{\varepsilon} + m^2 + \frac{1}{\varepsilon^2} \right] \left( t_0^m + \sum_{T_1 - \varepsilon \leqslant k \tau \leqslant T} \tau U_m(\bar{\rho}^{N, \lambda}_{k \tau}) \right).
\end{equation*}
Again, we have put $m^3 \varepsilon^{-1} (m \varepsilon^{-1} + m^2 + \varepsilon^{-2})$ in factor of $t_0^m$, which is legit because this factor is larger than $1$ for $\varepsilon$ small enough. Taking the power $1/m$ on each side, using the identity $(a+b)^{1/m} \leqslant C \max (a^{1/m}, b^{1/m})$, and letting $N \to + \infty$ and $\lambda \to 0$, we get the result.
\end{proof}

It is then very easy to iterate this result, which looks exactly like Proposition \ref{proposition_one_iteration_moser_like}. Thus, the proof of the following proposition, which is exactly the same as Proposition \ref{proposition_conclusion_iteration_flow_interchange}, is left to the reader. 

\begin{prop}
\label{proposition_conclusion_iteration_flow_interchange_boundary}
Suppose that either Assumption \ref{assumption_strong_congestion} holds or Assumptions \ref{assumption_strong_congestion-variant} and \ref{assumption_potential} both hold, and that Assumption \ref{assumption_temporal_bc} also holds. Then, for any $0 < T_1 < T$, there exists $C$ (that depends on $T_1, T, f, V$ and $\Omega$) such that 
\begin{equation*}
\limsup_{m \to + \infty} L^m_{T_1, T}  \leqslant C \max \left( L^{\alpha + 2}_{0,T}, t_0 \right).
\end{equation*}
\end{prop}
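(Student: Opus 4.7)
The plan is to mimic verbatim the iteration scheme of Proposition \ref{proposition_conclusion_iteration_flow_interchange}, but using the boundary estimate of Proposition \ref{proposition_one_iteration_boundary} in place of the interior one, so that the time interval only needs to be shrunk from the left endpoint while the right endpoint stays pinned at $T$. Concretely, I would fix $\varepsilon_0 > 0$ small enough that $T_1 - \varepsilon_0 \beta/(\beta - 1) > 0$ and $\varepsilon_0 \leqslant C_1/(\alpha + 2)$, where $C_1$ is the constant of Proposition \ref{proposition_one_iteration_boundary}. Then I would define
\begin{equation*}
T_1^n := T_1 - \sum_{k=n}^{+\infty} \frac{\varepsilon_0}{\beta^k}, \qquad m_n := (\alpha + 2)\, \beta^n,
\end{equation*}
so that $T_1^0 \geqslant 0$, the sequence $T_1^n$ increases to $T_1$ and $|T_1^{n+1} - T_1^n| = \varepsilon_0 \beta^{-n} \leqslant C_1/m_n$.

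At the $n$-th step I would apply Proposition \ref{proposition_one_iteration_boundary} with $m = m_n$, $\varepsilon = \varepsilon_0 \beta^{-n}$, $T_1$ replaced by $T_1^{n+1}$ (so that $T_1^{n+1} - \varepsilon = T_1^n$). Setting $\ell_n := \max\!\bigl(L^{m_n}_{T_1^n, T}, t_0\bigr)$, this produces a recursion of the form
\begin{equation*}
\ell_{n+1} \leqslant \left[ C_2 \frac{m_n^3}{\varepsilon_0 \beta^{-n}} \left( \frac{m_n}{\varepsilon_0 \beta^{-n}} + m_n^2 + \frac{1}{(\varepsilon_0 \beta^{-n})^2} \right) \right]^{1/m_n} \ell_n \leqslant \bigl[ C \beta^{c n} \bigr]^{\beta^{-n}/(\alpha + 2)} \ell_n,
\end{equation*}
where $c$ is a fixed exponent (e.g.\ $c = 6$) and $C$ absorbs $\varepsilon_0$ and $\alpha$. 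Because $\beta > 1$, the infinite product $\prod_{n \geqslant 0} \bigl[C \beta^{cn}\bigr]^{\beta^{-n}/(\alpha + 2)}$ converges, so $\sup_n \ell_n \leqslant C' \ell_0 = C' \max\!\bigl(L^{\alpha + 2}_{T_1^0, T}, t_0\bigr) \leqslant C' \max\!\bigl(L^{\alpha + 2}_{0, T}, t_0\bigr)$.

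Finally I would pass from the discrete subsequence $m_n$ to arbitrary large $m$ exactly as at the end of Proposition \ref{proposition_conclusion_iteration_flow_interchange}: Jensen's inequality gives, for $m \leqslant m_n$,
\begin{equation*}
L^m_{T_1, T} \leqslant \frac{(m_n(m_n-1))^{1/m_n}}{(m(m-1))^{1/m}}\, L^{m_n}_{T_1, T} \leqslant \frac{(m_n(m_n-1))^{1/m_n}}{(m(m-1))^{1/m}}\, L^{m_n}_{T_1^n, T},
\end{equation*}
and since $(m_n(m_n-1))^{1/m_n} \to 1$ and $(m(m-1))^{1/m} \to 1$, letting $m, n \to +\infty$ yields $\limsup_{m \to +\infty} L^m_{T_1, T} \leqslant \sup_n L^{m_n}_{T_1^n, T} \leqslant C \max\!\bigl(L^{\alpha + 2}_{0, T}, t_0\bigr)$.

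The only nontrivial point compared to the interior case is checking that the boundary estimate of Proposition \ref{proposition_one_iteration_boundary} has the same qualitative form (polynomial growth in $m$ and $\varepsilon^{-1}$ inside the $1/m$-power), so that the convergence of the infinite product is preserved; this is precisely what the extra factor $m/\varepsilon$ in the boundary estimate still allows, because $\varepsilon^{-1} = \beta^n/\varepsilon_0$ grows only geometrically. No other step poses difficulty, which is why the authors leave the argument to the reader.
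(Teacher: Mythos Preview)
Your proposal is correct and is precisely the argument the paper intends: the authors explicitly state that the proof ``is exactly the same as Proposition \ref{proposition_conclusion_iteration_flow_interchange}'' and leave it to the reader, and you have faithfully carried out that iteration, only shrinking the left endpoint while keeping the right one pinned at $T$ via Proposition \ref{proposition_one_iteration_boundary}. The check that the extra factor $m/\varepsilon$ still leads to at most geometric growth (hence a convergent infinite product) is exactly the point, and you have handled it correctly.
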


\subsection{The weak congestion case}

The scheme is very similar in the weak congestion case, even though the iteration is not as direct as in the strong congestion case. Moreover, we will directly prove an $L^\infty$ bound up to $t=T$, because, as we will see, Assumption \ref{assumption_temporal_bc} will be needed anyway to initialize the iterative process. The proofs will be less detailed in this case: the reading on the two previous subsections is advised to understand this one. 

\begin{prop}
\label{proposition_one_iteration_moser_like_weak}
Suppose Assumptions \ref{assumption_weak_congestion}, \ref{assumption_potential} and \ref{assumption_temporal_bc} hold. Then there exist constants $C_1$ and $C_2$ (depending on $f, V, T$ and $\Omega$) such that, for any $0 < \varepsilon \leqslant C_1/ m$ and any $0 < T_1 < T$ such that $0 < T_1 - \varepsilon$, then for any $m \geqslant m_0$, 
\begin{equation*}
L^{\beta (m+1+\alpha)}_{T_1, T} \leqslant \left[ C_2 \frac{m^{5/2}}{\varepsilon} \left(\frac{m}{\varepsilon} +  m + \frac{1}{\varepsilon^2} \right) \right]^{1/(m+1+\alpha)} \max \left[ \left( L^m_{T_1 - \varepsilon, T} \right)^{m/(m+1+\alpha)}, t_0 \right].
\end{equation*}
\end{prop}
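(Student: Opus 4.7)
The plan is to mirror the proof of Proposition \ref{proposition_one_iteration_boundary} almost line by line, replacing the strong-congestion flow interchange by Corollary \ref{corollary_flow_interchange_weak_congestion}. The starting point is the inequality
\begin{equation*}
U_{\beta(m+1+\alpha)}(\bar{\rho}^{N,\lambda}_{k\tau})^{1/\beta} \leqslant C m^2 \left[ \frac{U_m(\bar{\rho}^{N,\lambda}_{(k-1)\tau}) + U_m(\bar{\rho}^{N,\lambda}_{(k+1)\tau}) - 2 U_m(\bar{\rho}^{N,\lambda}_{k\tau})}{\tau^2} + Cm U_m(\bar{\rho}^{N,\lambda}_{k\tau}) \right] + C t_0^{m+1+\alpha},
\end{equation*}
which holds for every $1 \leqslant k \leqslant N-1$ and every $m\geqslant m_0$ under our assumptions. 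Note that, unlike in the strong-congestion case, the iteration raises only the exponent $m$ to $\beta(m+1+\alpha)$ rather than $\beta m$; this is precisely why \eqref{equation_relation_beta_m0} was used to fix $\beta$, and this also dictates that the final step must take the power $1/(m+1+\alpha)$.

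Next I introduce a $C^{1,1}$ cutoff $\chi$ equal to $1$ on $\intervalleff{T_1-\varepsilon/3}{T}$ and vanishing on $\intervalleff{0}{T_1-2\varepsilon/3}$, with $\|\chi''\|_\infty\leqslant 54/\varepsilon^2$. Multiplying the above inequality by $\tau\chi(k\tau)$, summing over $k\in\{1,\dots,N-1\}$, and performing a discrete integration by parts in time produces on the right-hand side both the expected interior second-difference term (controlled by $\|\chi''\|_\infty$) and a boundary term at $k=N$ of the form $[U_m(\bar{\rho}^{N,\lambda}_T)-U_m(\bar{\rho}^{N,\lambda}_{T-\tau})]/\tau$ multiplied by $\chi(T)=1$. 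Here Assumption \ref{assumption_temporal_bc} is essential: Proposition \ref{proposition_flow_interchange_boundary} gives an upper bound on this boundary difference by $(m-1)\|\Delta W\|_\infty U_m(\bar{\rho}^{N,\lambda}_T)$, so that it contributes a single value $U_m(\bar{\rho}^{N,\lambda}_T)$ of order $m$.

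To convert this pointwise boundary value into an integral over a small time neighborhood of $T$, I invoke Lemma \ref{lemma_reverse_Jensen_Neumann}, equation \eqref{equation_reverse_Jensen_boundary_control}, applied to $u^\tau_k:=U_m(\bar{\rho}^{N,\lambda}_{k\tau})$: the almost-convexity parameter from Corollary \ref{corollary_flow_interchange_weak_congestion} is $\omega=C\sqrt{m}$ and the Neumann constant from Proposition \ref{proposition_flow_interchange_boundary} is $b=Cm$, so the admissible range $\eta\leqslant\min\{\pi/(32\omega),\pi/(32b)\}$ yields the condition $\varepsilon\leqslant C_1/m$. This trades $m U_m(\bar{\rho}^{N,\lambda}_T)$ against $(m/\varepsilon)\sum_{T-\varepsilon\leqslant k\tau\leqslant T}\tau U_m(\bar{\rho}^{N,\lambda}_{k\tau})$. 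Collecting everything yields an estimate of the form
\begin{equation*}
\sum_{T_1-\varepsilon/3\leqslant k\tau\leqslant T}\tau U_{\beta(m+1+\alpha)}(\bar{\rho}^{N,\lambda}_{k\tau})^{1/\beta} \leqslant C t_0^{m+1+\alpha} + C m^2\Bigl[\tfrac{m}{\varepsilon}+m+\tfrac{1}{\varepsilon^2}\Bigr] \sum_{T_1-\varepsilon\leqslant k\tau\leqslant T}\tau U_m(\bar{\rho}^{N,\lambda}_{k\tau}).
\end{equation*}

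Finally, I apply Lemma \ref{lemma_reverse_Jensen_Neumann} a second time, via \eqref{equation_reverse_Jensen_boundary_inverse}, now with $u^\tau_k := U_{\beta(m+1+\alpha)}(\bar{\rho}^{N,\lambda}_{k\tau})$, whose almost-convexity parameter (again from Corollary \ref{corollary_flow_interchange_weak_congestion}) is $\omega=C\sqrt{\beta(m+1+\alpha)}\leqslant C\sqrt{m}$. This swap of power $1/\beta$ and sum produces the additional factor $(\omega+1)/\eta\sim\sqrt{m}/\varepsilon$, which is exactly what upgrades the $m^2$ prefactor to $m^{5/2}$ and is the origin of the $m^{5/2}$ appearing in the claim. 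Taking the power $1/(m+1+\alpha)$, using $(a+b)^{1/(m+1+\alpha)}\leqslant C\max(a^{1/(m+1+\alpha)},b^{1/(m+1+\alpha)})$, and sending $N\to\infty$, $\lambda\to 0$ delivers the announced inequality. The main technical point to be careful about is the bookkeeping of the two exponents: one must raise $\sum \tau U_m$ to the power $m/(m+1+\alpha)$ (rather than $1$) when identifying it with $(L^m_{T_1-\varepsilon,T})^{m/(m+1+\alpha)}$, and the $t_0$ power $m+1+\alpha$ must survive this rescaling intact, which is exactly why the statement features $\max((L^m_{T_1-\varepsilon,T})^{m/(m+1+\alpha)},t_0)$ rather than a plain maximum with $t_0^{m/(m+1+\alpha)}$.
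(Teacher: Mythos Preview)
Your proof is correct and follows essentially the same route as the paper's: start from Corollary \ref{corollary_flow_interchange_weak_congestion}, multiply by the same cutoff $\chi$, integrate by parts in discrete time, control the boundary term via Proposition \ref{proposition_flow_interchange_boundary}, and then invoke Lemma \ref{lemma_reverse_Jensen_Neumann} (with $\omega^2 = Cm$, $b = Cm$) both to convert the boundary value into an integral and to swap the power $1/\beta$ with the sum. Your exposition is in fact slightly more explicit than the paper's in separating the two uses of Lemma \ref{lemma_reverse_Jensen_Neumann} and in tracking the exponent bookkeeping that produces $(L^m)^{m/(m+1+\alpha)}$.
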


\begin{proof}
The proof starts the same way: starting from Corollary \ref{corollary_flow_interchange_weak_congestion}, we write 
\begin{equation}
\label{equation_iteration_start_flow_interchange_weak}
U_{\beta (m+1+\alpha)}(\bar{\rho}_{k \tau}^{N, \lambda})^{1/ \beta} \leqslant C m^2 \left[ \frac{ U_m(\bar{\rho}^{N, \lambda}_{(k-1) \tau}) + U_m(\bar{\rho}^{N, \lambda}_{(k+1) \tau}) -2 U_m(\bar{\rho}^{N, \lambda}_{k \tau})}{\tau^2} + Cm U_m(\bar{\rho}_{k \tau}^{N, \lambda}) \right] + C t_0^{m+1+\alpha}.
\end{equation}
Because of Assumption \ref{assumption_temporal_bc}, we can also write, tanks to Proposition \ref{proposition_flow_interchange_boundary}, that 
\begin{equation*}
\frac{U_m(\bar{\rho}^{N, \lambda}_{T - \tau}) - U_m(\bar{\rho}^{N, \lambda}_T)}{\tau} \geqslant - (m-1) \| \Delta W \|_\infty U_m(\bar{\rho}^{N, \lambda}_{T}).
\end{equation*}
We use the same cutoff function $\chi$ that in the proof of Proposition \ref{proposition_one_iteration_boundary}. We multiply \eqref{equation_iteration_start_flow_interchange_weak} by $\tau \chi(k \tau)$, perform a discrete integration by parts and end up with 
\begin{align*}
\sum_{T_1 - \varepsilon/3 \leqslant k \tau \leqslant T} \tau U_{\beta(m+1+\alpha)}(\bar{\rho}^{N, \lambda}_{k \tau})^{1/ \beta} & 
\leqslant C m^2 \Bigg( \frac{U_m(\bar{\rho}^{N, \lambda}_{T}) - U_m(\bar{\rho}^{N, \lambda}_{T - \tau} )}{\tau}   + \left[ m + \frac{1}{\varepsilon^2} \right] \sum_{T_1 - \varepsilon \leqslant k \tau \leqslant T} \tau U_m(\bar{\rho}^{N, \lambda}_{k \tau})   \Bigg) + C t_0^{m+1+\alpha} \\
& \leqslant C m^2 \Bigg( m U_m(\bar{\rho}^{N, \lambda}_T)   + \left[ m + \frac{1}{\varepsilon^2} \right] \sum_{T_1 - \varepsilon \leqslant k \tau \leqslant T} \tau U_m(\bar{\rho}^{N, \lambda}_{k \tau})   \Bigg) + C t_0^{m+1+\alpha}.
\end{align*}
We also use Lemma \ref{lemma_reverse_Jensen_Neumann} but this time with $\omega^2 = Cm$ (this is Corollary \ref{corollary_flow_interchange_weak_congestion}) and $b = C m$. The frequency $\omega^2$ is smaller than in the strong congestion case (where it was of the order $m^2$) because we have made stronger assumptions on the potential $V$, though this is not important as we only use the fact that $\omega$ grows not faster than a polynomial of $m$. With this lemma we can both transform the boundary term into an integral term and exchange the sum and the power $1/ \beta$: there exists $C_1$ such that if $0 < \varepsilon  \leqslant C_1/m$ and if $\tau$ is small enough, 
\begin{align*}
\left( \sum_{T_1 \leqslant k \tau \leqslant T} \tau U_{\beta(m+1+\alpha)}(\bar{\rho}^{N, \lambda}_{k \tau}) \right)^{1/ \beta} & \leqslant C \frac{( \sqrt{m} + 1) T^{1+1/\beta}}{\varepsilon} \sum_{T_1 - \varepsilon/3 \leqslant k \tau \leqslant T} \tau U_{\beta (m+1+\alpha)}(\bar{\rho}^{N, \lambda}_{k \tau})^{1/ \beta} \\
& \leqslant C \frac{m^{5/2}}{\varepsilon} \left( \frac{m}{\varepsilon} +  m + \frac{1}{\varepsilon^2} \right) \left( t_0^{m+1+\alpha} + \sum_{T_1 - \varepsilon \leqslant k \tau \leqslant T} \tau U_m(\bar{\rho}^{N, \lambda}_{k \tau}) \right).
\end{align*}
We take the power $1/(m+1+\alpha)$ on both sides, use the fact that $(a+b)^{1/(m+1+\alpha)} \leqslant C \max (a^{1/(m+1+\alpha)}, b^{1/(m+1+\alpha)})$, and let $N \to + \infty, \lambda \to 0$ to get the result.
\end{proof}

\noindent We proceed the same way by iterating the inequality, even though this expressions are slightly more complicated. Let us underline that the condition \eqref{equation_relation_beta_m0} on $\beta$ is precisely the one that ensures that $\beta(m+1+\alpha) > m$ as soon as $m \geqslant m_0$: it is only thanks to this condition that the iteration of Proposition \ref{proposition_one_iteration_moser_like_weak} will give useful information. 

\begin{prop}
\label{proposition_conclusion_iteration_flow_interchange_weak}
Suppose Assumptions \ref{assumption_weak_congestion}, \ref{assumption_potential} and \ref{assumption_temporal_bc} hold. Then, there exists $\gamma < + \infty$ such that, for any $0 < T_1 < T$, there exists $C$ (that depends on $T_1, T, f, V$ and $\Omega$) such that 
\begin{equation*}
\limsup_{m \to + \infty} L^m_{T_1, T} \leqslant C  \left( \max \left[  L^{m_0}_{0,T} , t_0 \right] \right)^\gamma.
\end{equation*}
\end{prop}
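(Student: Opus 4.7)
The strategy mimics the one of Proposition \ref{proposition_conclusion_iteration_flow_interchange}, but the iterative step now goes from the exponent $m$ to $\beta(m+1+\alpha)$, and crucially the $L^m$ norm on the right-hand side of Proposition \ref{proposition_one_iteration_moser_like_weak} appears with a power $m/(m+1+\alpha)$ which is strictly larger than $1$. One has therefore to keep track of a multiplicative amplification at each step, and show that the infinite product of these amplification factors remains finite.

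First, I would fix $\varepsilon_0 > 0$ small enough so that $T_1 - \sum_{n \geqslant 0} \beta^{-n}\varepsilon_0 > 0$ and $\varepsilon_0 \leqslant C_1/m_0$, and set $T^n_1 := T_1 - \sum_{k \geqslant n} \beta^{-k}\varepsilon_0$ (note that the right-hand endpoint always stays at $T$). I would then define the sequence of exponents by $m_{n+1} := \beta(m_n + 1 + \alpha)$ starting from $m_0$ as in the statement. The assumption $m_0 > d|\alpha+1|/2$, together with the choice of $\beta$ close enough to $d/(d-2)$ (so that relation \eqref{equation_relation_beta_m0} holds), ensures that $\beta(m_n+1+\alpha) > m_n$ for every $n$, and a direct recursion then gives $m_n \to +\infty$, with $m_n$ growing geometrically like $\beta^n m_0$ for large $n$.

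Applying Proposition \ref{proposition_one_iteration_moser_like_weak} with exponent $m_n$ and width $\varepsilon_n := \beta^{-n}\varepsilon_0$, and setting $A_n := \max(L^{m_n}_{T^n_1, T}, t_0)$, one obtains a recursion of the form
\begin{equation*}
A_{n+1} \leqslant C_n^{1/(m_n+1+\alpha)} A_n^{\,m_n/(m_n+1+\alpha)},
\end{equation*}
where $C_n$ grows only polynomially in $m_n$ and $\varepsilon_n^{-1}$, hence $\log C_n = O(n)$. Taking logarithms and iterating,
\begin{equation*}
\log A_n \leqslant \Bigl(\prod_{i=0}^{n-1} r_i\Bigr) \log A_0 + \sum_{i=0}^{n-1} \Bigl(\prod_{j=i+1}^{n-1} r_j\Bigr) \frac{\log C_i}{m_i+1+\alpha}, \qquad r_i := \frac{m_i}{m_i+1+\alpha}.
\end{equation*}
The main estimate to verify is that $\prod_{i \geqslant 0} r_i$ converges to a finite limit $\gamma \in [1,+\infty)$ and the infinite series converges as well: this holds because $\log r_i = -\log(1 + (1+\alpha)/m_i) \sim |\alpha+1|/m_i$ is summable (since $m_i$ grows geometrically), and similarly $\log C_i/(m_i+1+\alpha)$ is summable for the same reason. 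This yields $A_n \leqslant C A_0^\gamma$ uniformly in $n$, which, since $T^0_1 \geqslant 0$ and $A_0 \leqslant \max(L^{m_0}_{0,T}, t_0)$, gives
\begin{equation*}
\sup_{n \in \N} L^{m_n}_{T_1, T} \leqslant \sup_{n \in \N} A_n \leqslant C\bigl( \max[L^{m_0}_{0,T}, t_0] \bigr)^\gamma.
\end{equation*}

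Finally, to pass from the discrete sequence $(m_n)$ to arbitrary $m \to +\infty$, I would argue exactly as at the end of Proposition \ref{proposition_conclusion_iteration_flow_interchange}: for $m \leqslant m_n$, Jensen's inequality together with the normalization $\Leb(\Omega)=1$ gives $L^m_{T_1,T} \leqslant (m_n(m_n-1))^{1/m_n}(m(m-1))^{-1/m} L^{m_n}_{T_1,T}$, so that $\limsup_{m \to +\infty} L^m_{T_1,T} \leqslant \sup_{n \in \N} L^{m_n}_{T_1,T}$, which finishes the proof. The delicate part is really the analysis of the linear recursion on $\log A_n$ with amplification factor $r_i > 1$; everything else is a routine adaptation of the strong-congestion argument.
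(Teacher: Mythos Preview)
Your proposal is correct and follows essentially the same route as the paper's proof: the same recursive definition $m_{n+1}=\beta(m_n+1+\alpha)$, the same nested intervals shrinking by $\beta^{-n}\varepsilon_0$, the same passage to logarithms and analysis of the linear recursion with amplification factor $r_i=m_i/(m_i+1+\alpha)>1$, and the same Jensen argument at the end via \eqref{equation_control_Lm_Lmn_aux}. One small point you glossed over is the justification of the clean recursion $A_{n+1}\leqslant C_n^{1/(m_n+1+\alpha)}A_n^{r_n}$ for $A_n=\max(L^{m_n}_{T^n_1,T},t_0)$: since $r_n>1$, the inequality $t_0\leqslant A_n^{r_n}$ can fail when $t_0<1$; the paper handles this by observing at the outset that one may assume $t_0\geqslant 1$ without loss of generality (Assumption~\ref{assumption_weak_congestion} remains valid with $t_0$ replaced by $1$, and the conclusion is unchanged since $L^{m_0}_{0,T}\geqslant 1$).
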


\begin{proof}
As we know, thanks to our normalization choices, that $L^{m_0}_{0,T} \geqslant 1$, it is not restrictive that assume that $t_0 \geqslant 1$ (indeed, if this is not the case, Assumption \ref{assumption_weak_congestion} is still valid with $t_0 = 1$ and the content of Proposition \ref{proposition_conclusion_iteration_flow_interchange_weak} does not change). 

Once we have chosen $\varepsilon_0 < \beta T_1 / (\beta -1)$, we define $T^n_1$ by the same formula as in the proof of Proposition \ref{proposition_conclusion_iteration_flow_interchange}. We also define $m_n$ by recurrence: for any $n \in \N$, we take $m_{n+1} = \beta (m_n + 1 + \alpha)$. Thus, we have the explicit expression 
\begin{equation*}
m_n = \left( m_0 + (\alpha + 1) \frac{\beta}{\beta -1} \right) \beta^n - (\alpha + 1) \frac{\beta}{\beta -1}.
\end{equation*}
In particular, $(m_n)_{n \in \N}$ diverges exponentially fast to $+ \infty$ as $n \to + \infty$. Using Proposition \ref{proposition_one_iteration_moser_like_weak} and as $t_0 \geqslant 1$, we get 
\begin{align*}
L^{m_{n+1}}_{T^{n+1}_1, T} & \leqslant \left[ C_2 \frac{m_n^{5/2}}{\varepsilon_0 \beta^{-n}} \left( \frac{m_n}{\varepsilon_0 \beta^{-n}} + m_n + \frac{1}{(\varepsilon_0 \beta ^{-n})^2} \right) \right]^{1/(m_n+1+\alpha)} \max \left[ \left( L^{m_n}_{T^n_1, T} \right)^{m_n/(m_n + 1 + \alpha)}, t_0 \right]  \\
& \leqslant \left[ C \beta^{11n/2} \right]^{1/(m_n+\alpha+1)} \max \left( \left[  L^{m_n}_{T^n_1, T}, t_0 \right] \right)^{m_n/(m_n + \alpha +1)}. 
\end{align*}  
Denoting by $l_n := \ln \left( \max \left[ L^{m_n}_{T^n_1, T}, t_0 \right] \right)$, we see that 
\begin{equation*}
l_{n+1} \leqslant C_3 \frac{11n}{2(m_n + \alpha +1)} + \frac{C_4}{m_n+\alpha +1} + \frac{m_n}{m_n + \alpha +1} l_n.  
\end{equation*}
Given the exponential asymptotic growth of $(m_n)_{n \in \N}$, we leave it to the reader to check that is enough to conclude that $\limsup_{n \to + \infty} l_n \leqslant \gamma l_0 + C_5$ for some $\gamma < + \infty$ and $C_4 < + \infty$. Taking the exponential gives
\begin{equation*}
\limsup_{n \in \N} L^{m_n}_{T^n_1, T} \leqslant C  \left( \max \left[  L^{m_0}_{0,T} , t_0 \right] \right)^\gamma.
\end{equation*}   
To conclude, we use again \eqref{equation_control_Lm_Lmn_aux}, which is valid independently of Assumption \ref{assumption_strong_congestion} or Assumption \ref{assumption_weak_congestion}. 
\end{proof}

However, in the weak congestion case, the fact that $L_{0,T}^{m_0} < +\infty$ will require a little bit more of work and relies on the particular form of the boundary conditions.   

\begin{prop}
\label{proposition_initialisation_weak_congestion}
Suppose Assumptions \ref{assumption_weak_congestion}, \ref{assumption_potential} and \ref{assumption_temporal_bc} hold.  Then there exists $T_\text{max}$ (which depends on $f, V, \Psi$ and $\Omega$) such that, if $T \leqslant T_\text{max}$,  
\begin{equation*}
L_{0,T}^{m_0} < + \infty.
\end{equation*}
\end{prop}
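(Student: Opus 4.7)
The plan is to apply the flow interchange estimates already proved at the single exponent $m = m_0$, and to combine them via a Gronwall-flavored discrete computation into a uniform bound of the form $\sum_{k} \tau U_{m_0}(\bar\rho^{N,\lambda}_{k\tau}) \leq C U_{m_0}(\overline{\rho_0})$, valid as soon as $T$ is small enough. Throughout, write $u_k := U_{m_0}(\bar\rho^{N,\lambda}_{k\tau})$.

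I would first check that $m = m_0$ is admissible in Corollary \ref{corollary_flow_interchange_weak_congestion}: the combination of $m_0 > d|\alpha+1|/2$, $\beta < d/(d-2)$, and \eqref{equation_relation_beta_m0} readily gives $\beta(m_0+1+\alpha) > 1$. That corollary then yields the interior inequality
\begin{equation*}
u_{k-1} + u_{k+1} - 2 u_k \geq - \tau^2 \omega^2 u_k, \qquad k = 1, \ldots, N-1, \qquad \omega^2 := C m_0,
\end{equation*}
and Proposition \ref{proposition_flow_interchange_boundary} applied at $m = m_0$, after rearrangement, provides the Robin-type boundary inequality $u_N - u_{N-1} \leq \tau b\, u_N$ with $b := (m_0-1) \| \Delta W \|_\infty$. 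The initial value $u_0 = U_{m_0}(\overline{\rho_0})$ is finite by hypothesis.

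The heart of the argument is a double summation. Writing $\Delta u_j := u_{j+1} - u_j$ and $S := \sum_{k=1}^{N-1} u_k$, telescoping the interior inequality from $j$ to $N-1$ gives $\Delta u_j \leq \Delta u_{N-1} + \tau^2 \omega^2 \sum_{k>j} u_k$, whence, using the boundary inequality,
\begin{equation*}
\Delta u_j \leq \tau b\, u_N + \tau^2 \omega^2 S, \qquad j = 0, \ldots, N-1.
\end{equation*}
Summing this over $j$ (and using $k \tau \leq T$) yields $u_k \leq u_0 + T b\, u_N + T \tau \omega^2 S$ for every $k$. Evaluating at $k = N$ produces $u_N (1 - T b) \leq u_0 + T \tau \omega^2 S$, while multiplying the bound for $u_k$ by $\tau$ and summing over $k \in \{1, \ldots, N-1\}$ produces $\tau S(1 - T^2 \omega^2) \leq T u_0 + T^2 b\, u_N$. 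Plugging the former estimate into the latter and simplifying should collapse into the clean closed inequality
\begin{equation*}
\tau S \bigl( 1 - T b - T^2 \omega^2 \bigr) \leq T u_0.
\end{equation*}

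Hence choosing $T_{\max}$ small enough that $T b + T^2 \omega^2 \leq 1/2$ whenever $T \leq T_{\max}$ (a threshold depending only on $m_0$, on $\| \Delta W \|_\infty$, and on the constant of Corollary \ref{corollary_flow_interchange_weak_congestion}, i.e.\ on $f, V, \Psi, \Omega$ and, through $m_0$, on $\overline{\rho_0}$) will give $\tau S \leq 2 T \, U_{m_0}(\overline{\rho_0})$ uniformly in $N$ and $\lambda$; taking $\liminf$ as $N \to + \infty$ and $\lambda \to 0$ concludes that $L^{m_0}_{0, T} < + \infty$. The main subtlety, which is precisely what forces the smallness condition on $T$, is that the interior inequality encodes almost-\emph{convexity} of $k \mapsto u_k$ rather than almost-concavity, so no maximum principle will bound $u_N$ directly by $u_0$; instead one has to reabsorb $u_N$ into $S$ at the very end, which is only possible when $T b + T^2 \omega^2 < 1$.
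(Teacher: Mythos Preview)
Your argument is correct and genuinely different from the paper's. The paper proves the same result by invoking Lemma~\ref{lemma_reverse_Jensen_boundary}, which is a comparison principle: the sequence $u_k = U_{m_0}(\bar\rho^{N,\lambda}_{k\tau})$ is compared with explicit trigonometric subsolutions $v_k = A\cos(2\omega k\tau + \delta)$ matching the Dirichlet value $u_0 = a$ and the Robin condition at $k=N$, and the maximum principle established in the Appendix yields the pointwise bound $u_k \leq C$ for all $k$, with $T_{\max} = \min\{\pi/(32\omega),\pi/(32b)\}$.

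Your route is a direct double-summation (Gronwall) argument: telescope the interior almost-convexity inequality to bound each $\Delta u_j$ by $\tau b\,u_N + \tau^2\omega^2 S$, sum to bound each $u_k$, and then close the inequalities to absorb both $u_N$ and $S$. The algebra you sketch does work; expanding $(1-T^2\omega^2)(1-Tb)$ and subtracting the cross term indeed collapses to $\tau S(1 - Tb - T^2\omega^2)\le Tu_0$. The smallness condition $Tb + T^2\omega^2 < 1$ that you identify is of the same order as the paper's. What you gain is elementarity: no comparison functions, no Appendix machinery. What the paper's approach buys is a pointwise bound $\sup_k u_k \leq C$ rather than only the integrated one $\tau S \leq C$, and reuse of the same trigonometric-comparison framework that is anyway needed for the reverse Jensen inequalities (Lemmas~\ref{lemma_reverse_Jensen} and~\ref{lemma_reverse_Jensen_Neumann}). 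For the present proposition only the integrated bound is required, so your shortcut suffices.
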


\begin{proof}
Again we will use the almost convexity of $U_{m_0}(\bar{\rho}^{N, \lambda})$. Indeed, we rely on the following lemma, which has the same flavor as the "reverse Jensen inequality" and whose proof is postponed in Appendix \ref{appendix}. 

\begin{lm}
\label{lemma_reverse_Jensen_boundary}
Let $a >0$, $b \geqslant 0$ and $\omega \geqslant 0$ and set $T_\mathrm{max} = \min \{ \pi /(32 \omega), \pi / (32b) \}$. Then there exist some constants $C<+ \infty$ and $\tau_0 >0$ (all depending on $a, b$ and $\omega$) such that for any $T \leqslant T_\text{max}$, any $N > 1/ \tau_0$ ($\tau := T/N$) and for any sequence $(u_k^\tau)_k \in \Z$ of strictly positive numbers satisfying \eqref{equation_almost_convex} for $k \in \{ 1,2, \ldots, N-1 \}$, and such that $u_0^\tau = a$ and 
\begin{equation*}
\frac{u_{N-1}^\tau - u^\tau_N}{\tau} \geqslant -b u_N^\tau,
\end{equation*}
one has $u^\tau_k \leqslant C$ for any $k \in \{ 1,2, \ldots, N \}$. \end{lm}

\noindent We use this lemma with $u_k^\tau = U_{m_0}(\bar{\rho}^{N, \lambda}_{k \tau})$. Equation \eqref{equation_almost_convex} is satisfied with $\omega^2 = C m_0$ (Corollary \ref{corollary_flow_interchange_weak_congestion}); one can take 
$$
a = U_{m_0} (\bar{\rho}^{N, \lambda}_0) = U_{m_0} (\overline{\rho_0})= \frac{1}{m_0(m_0-1)} \int_\Omega \overline{\rho_0}^{m_0}; 
$$
and we take $b = (m_0 -1) \| \Delta W \|_\infty$ (cf. Proposition \ref{proposition_flow_interchange_boundary}). Thus, one can conclude that if $T \leqslant T_\text{max}$, then $U_{m_0}(\bar{\rho}^{N, \lambda}_{k \tau})$ is bounded independently on $N$. This is enough to conclude that $L_{0,T}^{m_0}$ is finite.     
\end{proof}

\section{Limit of the discrete problems}

In this section, we will see that the solutions $\bar{\rho}^{N, \lambda}$ of the discrete problems \eqref{equation_discrete_problem} converge to the solution $\bar{\rho}$ of the continuous one \eqref{equation_continuous problem} when $N \to + \infty$ and $\lambda \to 0$. Then, using the results of the previous sections, we will be able to show the $L^\infty$ bound on $\bar{\rho}$.  

\subsection{Building discrete curves from continuous one}

In our construction we will need to work with curves with finite entropy. This is easy under Assumption \ref{assumption_strong_congestion} of \ref{assumption_strong_congestion-variant}, but requires an approximation in the case of Assumption \ref{assumption_weak_congestion}. Hence, we will show that in this case we can approximate curves in $\Gamma$ by curves in $\Gamma$ with finite entropy. In order to do this, we will use the heat flow, whose definition and some useful properties are recalled below. For any $s \geqslant 0$ and any $\mu \in \Prob(\Omega)$, let us define $\Phi_s \mu := u(s)$, where $u$ is the solution of the Cauchy problem 
\begin{equation*}
\begin{cases}
\partial_t u= \Delta u & \text{in } \intervalleoo{0}{+ \infty} \times \mathring{\Omega} \\
\nabla u  \cdot \mathbf{n} = 0 & \text{on } \intervalleoo{0}{+ \infty} \times \partial \Omega \\
\dst{\lim_{t \to 0} u(t)} = \mu & \text{in } \Prob(\Omega)
\end{cases}.
\end{equation*}
In the equation above, $\mathbf{n}$ stands for the normal vector to the boundary $\partial \Omega$. Provided that the boundary $\partial \Omega$ of $\Omega$ is smooth, it is well known (see for instance \cite[Section 7]{Arendt2002} and \cite{Pierre1982}) that this Cauchy problem is well-posed and admits a unique solution. Because of the no-flux boundary conditions, $\Phi_s \mu \in \Prob(\Omega)$ for any $s \geqslant 0$. Let us summarize the properties of the heat flow that will be useful to us. 

\begin{prop}
\label{proposition_properties_heat_flow}
\
\begin{enumerate}
\item There exists $C$ (only depending on $\Omega$) such that for all $\mu \in \Prob(\Omega)$ and any $s > 0$,  
\begin{equation*} 
\| \Phi_s \mu \|_{L^\infty} \leqslant C s^{-d/2} + 1.
\end{equation*}
\item If $h : \R \to \R$ is any convex function bounded from below, $\rho \in \Prob(\Omega) \cap L^1(\Omega)$, and $s \geqslant 0$, 
\begin{equation*}
\int_\Omega h \left[ (\Phi_s \rho)(x) \right] \ddr x \leqslant \int_\Omega h[\rho(x)] \ddr x;
\end{equation*} 
If $h$ is not superlinear, the same stays true for any $\rho\in\Prob(\Omega)$ by replaing the integral $\int_\Omega  h[\rho(x)] \!\ddr x$ with the expression in \eqref{defi functional h}.
\item If $\mu$ and $\nu \in \Prob(\Omega)$, and $s \geqslant 0$, 
\begin{equation}
\label{equation_heat_flow_decreases_W}
W_2(\Phi_s \mu, \Phi_s \nu) \leqslant W_2(\mu, \nu).
\end{equation}
\item Let $\mu \in \Prob(\Omega)$ with $U_1(\mu) < + \infty$. Then the curve $s \mapsto \Phi_s \mu$ is $2$-absolutely continuous and for any $s \geqslant 0$, 
\begin{equation}
\label{equation_EI}
\int_0^s |\dot{\Phi_r \mu}|^2 \ddr r = U_1(\mu) - U_1(\Phi_s(\mu)). 
\end{equation} 
\end{enumerate}
\end{prop}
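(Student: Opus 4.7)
The plan is to prove the four properties essentially independently, relying on classical facts about the Neumann heat kernel $p_s(x,y)$ on the smooth convex domain $\Omega$ and on Otto's interpretation of the heat flow as the Wasserstein gradient flow of the entropy $U_1$. Throughout, we write $\rho_s := \Phi_s \mu$, and use freely that the solution $\rho_s$ is smooth and strictly positive on $\mathring{\Omega}$ for every $s>0$ by parabolic regularity.

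For (1), I would use the representation $\rho_s(x) = \int_\Omega p_s(x,y)\, \mathrm{d}\mu(y)$ together with the standard Gaussian upper bound $p_s(x,y)\leqslant C_\Omega s^{-d/2}$ for $0<s\leqslant 1$ (obtained e.g.\ from the parametrix construction or by comparing with the whole-space heat kernel after a reflection argument near the smooth boundary). For $s\geqslant 1$, ergodicity of the Neumann semigroup gives a uniform bound $\|\rho_s\|_\infty\leqslant 1+C_\Omega e^{-\lambda_1 s}\leqslant 1+C_\Omega$. Combining the two regimes yields $\|\rho_s\|_\infty \leqslant C_\Omega s^{-d/2}+1$.

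For (2), first assume $\rho\in L^1\cap\Prob(\Omega)$ and that $h$ is smooth with bounded derivatives (later mollifying). By (1) we already know $\rho_s$ is bounded and smooth for $s>0$, so we may compute
\begin{equation*}
\frac{\mathrm{d}}{\mathrm{d}s}\int_\Omega h(\rho_s)\,\mathrm{d}x = \int_\Omega h'(\rho_s)\Delta \rho_s\, \mathrm{d}x = -\int_\Omega h''(\rho_s)|\nabla \rho_s|^2\, \mathrm{d}x \leqslant 0,
\end{equation*}
where the boundary term vanishes thanks to the Neumann condition. Integrating from $s_0>0$ to $s$ and letting $s_0\to 0$ (using continuity of $s\mapsto\rho_s$ in $L^1$ and Fatou) gives the inequality. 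The general convex $h$ bounded from below is then recovered by monotone approximation from below. For a possibly singular $\mu\in\Prob(\Omega)$, I would apply the result to $\Phi_{s_0}\mu\in L^1$ and let $s_0\to 0$, invoking Proposition~\ref{proposition_continuity_functionals} (lower semicontinuity of the extended functional) to pass to the limit.

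For (3), I would use the synchronous coupling of reflected Brownian motions: given any $\gamma\in\Prob(\Omega\times\Omega)$ with marginals $\mu,\nu$, let $(X_t,Y_t)$ solve $\mathrm{d}X_t = \sqrt{2}\,\mathrm{d}B_t + \mathrm{d}k^X_t$ and $\mathrm{d}Y_t = \sqrt{2}\,\mathrm{d}B_t + \mathrm{d}k^Y_t$ with the same Brownian motion $B$, initial law $\gamma$, and Skorokhod reflection terms $k^X, k^Y$ pointing inward on $\partial\Omega$. Convexity of $\Omega$ implies $(X_t-Y_t)\cdot\mathrm{d}k^X_t\leqslant 0$ and $(Y_t-X_t)\cdot\mathrm{d}k^Y_t\leqslant 0$, hence $|X_t-Y_t|$ is $\P$-a.s.\ non-increasing. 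Taking the law of $(X_s,Y_s)$ as a coupling of $\Phi_s\mu$ and $\Phi_s\nu$ (and optimizing $\gamma$), one obtains \eqref{equation_heat_flow_decreases_W}. (Equivalently, this is Bakry--Émery $\mathrm{BE}(0,\infty)$ for the Neumann Laplacian on convex domains.)

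The main difficulty is part (4), the energy identity. Here the plan is to identify the heat equation with a continuity equation $\partial_s\rho_s + \nabla\cdot(\rho_s v_s) = 0$ for the velocity $v_s := -\nabla\log\rho_s$ (which is well-defined and smooth for $s>0$ by parabolic regularity and strict positivity). By the characterization of absolutely continuous curves in $(\Prob(\Omega),W_2)$ from \cite{Ambrosio2005}, this already yields 2-absolute continuity of $s\mapsto\rho_s$ away from $0$ together with the upper bound
\begin{equation*}
|\dot\rho_s|^2\leqslant \int_\Omega |v_s|^2\rho_s\,\mathrm{d}x = \int_\Omega\frac{|\nabla\rho_s|^2}{\rho_s}\,\mathrm{d}x.
\end{equation*}
The equality holds because $v_s$ is a gradient, and hence is the minimal-norm velocity realizing the continuity equation, again by \cite[Chapter~8]{Ambrosio2005}. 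A direct calculation (the de Bruijn identity, justified by parabolic regularity and Neumann boundary conditions) gives
\begin{equation*}
\frac{\mathrm{d}}{\mathrm{d}s}U_1(\rho_s) = \int_\Omega (1+\log\rho_s)\Delta\rho_s\,\mathrm{d}x = -\int_\Omega \frac{|\nabla\rho_s|^2}{\rho_s}\,\mathrm{d}x.
\end{equation*}
Integrating on $[s_0,s]$ and combining with the previous identity yields \eqref{equation_EI} on $[s_0,s]$, and the assumption $U_1(\mu)<+\infty$ together with lower semicontinuity of $U_1$ and of the action $\int|\dot\rho|^2$ (Proposition~\ref{proposition_properties_action}) allows one to let $s_0\to 0$. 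The main technical obstacle is precisely this last passage to the limit at $s=0$, as well as the need to rigorously justify the equality $|\dot\rho_s|^2=\int|\nabla\rho_s|^2/\rho_s$; both are however standard consequences of the gradient-flow theory once the entropy is finite initially.
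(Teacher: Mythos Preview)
Your approach is correct in spirit and for points (1), (3), (4) it differs from the paper mainly in that you spell out arguments where the paper simply cites references: the paper just invokes \cite{Arendt2002} for the $L^1$--$L^\infty$ bound and \cite[Theorem~11.2.1]{Ambrosio2005} for both the Wasserstein contraction and the energy identity, whereas you give a synchronous-coupling proof of (3) and unpack the gradient-flow argument for (4). These are legitimate and arguably more self-contained alternatives.

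For point (2), however, the paper takes a genuinely different and cleaner route. Rather than differentiating $s\mapsto\int_\Omega h(\rho_s)$ and integrating by parts, the paper uses the heat kernel representation $\Phi_s\rho(x)=\int_\Omega K_s(x,y)\rho(y)\,\mathrm{d}y$ together with the symmetry fact $\int_\Omega K_s(x,y)\,\mathrm{d}x=1$ (invariance of $\mathcal{L}$), and applies Jensen's inequality directly:
\[
\int_\Omega h\!\left(\int_\Omega K_s(x,y)\rho(y)\,\mathrm{d}y\right)\mathrm{d}x
\;\leqslant\;\int_\Omega\!\int_\Omega K_s(x,y)\,h(\rho(y))\,\mathrm{d}y\,\mathrm{d}x
\;=\;\int_\Omega h(\rho(y))\,\mathrm{d}y.
\]
This avoids both the mollification of $h$ and, more importantly, the passage $s_0\to 0$. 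On that last point your justification ``continuity in $L^1$ and Fatou'' is shaky as written: Fatou applied to $h(\rho_{s_0})$ (bounded below, with $\rho_{s_0}\to\rho_0$ a.e.\ along a subsequence) yields $\int h(\rho_0)\leqslant\liminf\int h(\rho_{s_0})$, which is the \emph{wrong} direction for what you need. The gap is repairable (e.g.\ first treat $\rho_0\in L^\infty$ by dominated convergence, then approximate), but the paper's Jensen argument sidesteps the issue entirely and is worth knowing.
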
 

\begin{proof}
The first point is a classic $L^\infty-L^1$ estimate for the heat equation, see for instance \cite[Section 7]{Arendt2002}. 

To prove the second point in the case of $\rho\in L^1$, let us denote by $K_t(x,y)$ the heat kernel (see  \cite[Section 7]{Arendt2002}). Using in particular Jensen's inequality and the fact that $\int_\Omega K_t(x,y) \ddr x = 1$ for any $y$ and $t$ (because $\Leb$ is invariant under the heat flow), 
\begin{align*}
\int_\Omega h \left[ (\Phi_s \rho)(x) \right] \ddr x & = \int_\Omega h \left( \int_\Omega K_s(x,y) \rho(y) \ddr y \right) \ddr x \\
& \leqslant \int_{\Omega \times \Omega}  K_s(x,y) h(\rho(y)) \ddr y \ddr x \\
& = \int_\Omega h[\rho(y)] \ddr y. 
\end{align*} 
The proof in the case where $h$ is not superlinear and $\rho$ is not absolutely continuous is obtained by writing $\rho =: \rho^{ac} \Leb + \rho^{sing} $. Observing that $h'(\infty)$ is the Lipschitz constant of $h$, we have
$$\int_\Omega h \left[ (\Phi_s \rho)(x) \right] \ddr x - \int_\Omega h \left[ (\Phi_s \rho^{ac})(x) \right] \ddr x\leq h'(\infty) \int_\Omega |\Phi_s \rho^{sing}|(x)\ddr x=h'(\infty)\rho^{sing}(\Omega).$$
The proofs of the third and last points rely on the fact that the heat flow is the gradient flow of the entropy $U_1$ in the Wasserstein space and can be found in \cite[Theorem 11.2.1]{Ambrosio2005}. 
\end{proof}  

\begin{prop}
\label{proposition_regularization_curve}
Suppose Assumption \ref{assumption_temporal_bc} holds and that $\overline{\rho_0}$ is such that $U_1(\overline{\rho_0}),F(\overline{\rho_0})<+\infty$, and let $\rho \in \Gamma$ with $\rho_0 = \overline{\rho_0}$. Then, for any $\varepsilon > 0$, there exists $\tilde{\rho} \in \Gamma$ with $\tilde{\rho}_0 = \overline{\rho_0}$ and $C < + \infty$ such that $\A(\tilde{\rho}) \leqslant \A(\rho) + \varepsilon$ and $U_1(\tilde{\rho}_t) \leqslant C$ for any $t \in \intervalleff{0}{1}$.
\end{prop}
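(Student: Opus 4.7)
The plan is to regularize $\rho$ by post-composing with the heat flow while preserving the initial condition via a time-splitting construction. Concretely, I would fix small parameters $0 < s \ll \delta < T$, to be chosen at the end, and define $\tilde\rho \in \Gamma$ piecewise by
\begin{equation*}
\tilde\rho_t :=
\begin{cases}
\Phi_{st/\delta} \overline{\rho_0} & \text{if } t \in \intervalleff{0}{\delta}, \\
\Phi_s \rho_{\sigma(t)} & \text{if } t \in \intervalleff{\delta}{T},
\end{cases}
\end{equation*}
where $\sigma(t) := T(t-\delta)/(T-\delta)$ linearly reparametrizes $\intervalleff{\delta}{T}$ onto $\intervalleff{0}{T}$. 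Continuity at $t=\delta$ holds because both pieces equal $\Phi_s \overline{\rho_0}$ there, and $\tilde\rho_0 = \overline{\rho_0}$ by construction. The uniform entropy bound is immediate: on $\intervalleff{0}{\delta}$, Proposition \ref{proposition_properties_heat_flow}(2) yields $U_1(\tilde\rho_t) \leqslant U_1(\overline{\rho_0}) < +\infty$; while on $\intervalleff{\delta}{T}$ the $L^\infty$ estimate of Proposition \ref{proposition_properties_heat_flow}(1) combined with the fact that $t \mapsto t\ln t$ is bounded on $\intervalleff{0}{M}$ gives $U_1(\tilde\rho_t) \leqslant C(s) < +\infty$ as soon as $s > 0$ is fixed.

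To bound $\A(\tilde\rho)$, I would estimate each of its contributions separately. For the kinetic term on $\intervalleff{0}{\delta}$, the change of variable $r = st/\delta$ combined with the energy identity \eqref{equation_EI} yields
\begin{equation*}
\int_0^\delta |\dot{\tilde\rho}_t|^2 \ddr t = \frac{s}{\delta} \bigl( U_1(\overline{\rho_0}) - U_1(\Phi_s \overline{\rho_0}) \bigr) \leqslant \frac{s}{\delta} U_1(\overline{\rho_0}),
\end{equation*}
the hypothesis $U_1(\overline{\rho_0}) < +\infty$ being exactly what makes this computation legitimate. On $\intervalleff{\delta}{T}$, the $W_2$-contractivity \eqref{equation_heat_flow_decreases_W} of the heat flow gives $|\dot{\tilde\rho}_t| \leqslant \sigma'(t) |\dot{\rho}|_{\sigma(t)}$, whence $\int_\delta^T |\dot{\tilde\rho}_t|^2 \ddr t \leqslant (T/(T-\delta)) \int_0^T |\dot\rho_r|^2 \ddr r$. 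For the congestion term, Proposition \ref{proposition_properties_heat_flow}(2) applied to $f$ yields $F(\Phi_s \rho) \leqslant F(\rho)$, hence $\int_0^T F(\tilde\rho_t) \ddr t \leqslant \delta F(\overline{\rho_0}) + \int_0^T F(\rho_t) \ddr t$, where the extra phase-1 term is controlled by the hypothesis $F(\overline{\rho_0}) < +\infty$. The potential term is handled via the self-adjointness of the heat semigroup under Neumann boundary conditions: $\int V \ddr(\Phi_s \rho) = \int (\Phi_s V) \ddr \rho$, and $\Phi_s V \to V$ uniformly on the compact $\Omega$ since $V$ is Lipschitz. Finally, Assumption \ref{assumption_temporal_bc} enters at $t=T$: Proposition \ref{proposition_properties_heat_flow}(2) applied to $g$ gives $\int g(\Phi_s \rho_T) \leqslant \int g(\rho_T)$, and continuity of $W$ yields $\int W \ddr(\Phi_s \rho_T) \to \int W \ddr \rho_T$ as $s \to 0$.

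Summing these contributions, one obtains
\begin{equation*}
\A(\tilde\rho) \leqslant \A(\rho) + \frac{s}{\delta} U_1(\overline{\rho_0}) + \delta F(\overline{\rho_0}) + o_\delta(1) + o_s(1),
\end{equation*}
where the little-$o$ terms collect the reparametrization factor $T/(T-\delta) - 1$ and the uniform convergences in $V$ and $W$. One concludes by first choosing $\delta$ small enough to push the $\delta$-dependent errors below $\varepsilon/2$, then $s$ small relative to $\delta$ (for instance $s = \delta^2$) so that the gluing cost $(s/\delta) U_1(\overline{\rho_0})$ also falls below $\varepsilon/2$. The one subtle point I anticipate is precisely this two-scale balance: a single-parameter regularization cannot work, because one needs $\delta$ small to keep the modification localized near $t=0$ and $s$ positive to regularize, while the gluing kinetic cost scales as $s/\delta$ and must be made negligible --- forcing the decoupling $s \ll \delta$.
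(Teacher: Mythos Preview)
Your proof is correct and follows essentially the same strategy as the paper: regularize by the heat flow $\Phi_s$, and glue in an initial segment of pure heat evolution from $\overline{\rho_0}$ to preserve the initial condition, squeezing the original curve into the remaining time.

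The only substantive difference is that you introduce two scales $s \ll \delta$, whereas the paper simply takes $\delta = s$: on $[0,s]$ one follows $t \mapsto \Phi_t\overline{\rho_0}$, and on $[s,T]$ one uses $\Phi_s$ applied to the squeezed curve. Your final remark that a single-parameter regularization \emph{cannot} work is therefore mistaken. The reason you believe the decoupling is forced is that you bound the gluing kinetic cost by $\frac{s}{\delta} U_1(\overline{\rho_0})$, which is a constant when $\delta = s$. But the sharper bound you already wrote down,
\[
\int_0^\delta |\dot{\tilde\rho}_t|^2 \ddr t \;=\; \frac{s}{\delta}\bigl(U_1(\overline{\rho_0}) - U_1(\Phi_s\overline{\rho_0})\bigr),
\]
with $\delta = s$ becomes $U_1(\overline{\rho_0}) - U_1(\Phi_s\overline{\rho_0})$, and this tends to $0$ as $s \to 0$ by lower semicontinuity of $U_1$ (since $\Phi_s\overline{\rho_0} \to \overline{\rho_0}$ and $U_1(\overline{\rho_0})$ is finite). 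This is precisely how the paper closes the argument, and it removes the need for the second parameter entirely. Your approach is not wrong, just slightly more elaborate than necessary.
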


\begin{proof}
Without loss of generality, we assume $\A(\rho) < + \infty$. The idea is to use the heat flow to regularize solutions. But we cannot apply the heat flow uniformly, as we would loose the boundary condition $\rho_0 = \overline{\rho_0}$. Consequently, for any $0 < s < T$, we define $\tilde{\rho}^s \in \Prob(\Gamma)$ by 
\begin{equation*}
\tilde{\rho}^s(t) := \begin{cases}
\Phi_t ( \rho_0) & \text{if } 0 \leqslant t \leqslant s \\
\dst{\Phi_s \left( \rho \left[ \frac{t-s}{T-s} T \right] \right)} & \text{if } s \leqslant t \leqslant T
\end{cases}.
\end{equation*}
In other words, we take the curve $\Phi_s \rho$, squeeze it into $\intervalleff{s}{T}$, and use the heat flow to join $\rho_0$ to $\rho_s$ on $\intervalleff{0}{s}$. In particular, $\tilde{\rho}^s_0 = \rho_0 = \overline{\rho_0}$ and $\tilde{\rho}^s_T = \Phi_s \rho_T$. From $U_1(\rho_0) < + \infty$ and the fact that $U_1$ is decreasing along the heat flow (see Proposition \ref{proposition_properties_heat_flow}), $U_1(\tilde{\rho}_t)$ is bounded by $U_1(\rho_0)$ if $t \in \intervalleff{0}{s}$ and by a constant $C_s$ (depending only on $s$ and $\Omega$) if $t \in \intervalleff{s}{T}$. Hence, for any $s > 0$, there exists $C < + \infty$ such that $U_1(\tilde{\rho}^s_t) \leqslant C$ for any $t \in \intervalleff{0}{1}$. 

It remains to show that $\A$ does not increase too much because of our regularization process. Using the second point of Proposition \ref{proposition_properties_heat_flow}, one can see that 
\begin{equation*}
\int_0^T  F(\tilde{\rho}^s_t)  \ddr t + G(\tilde{\rho}^s_T) \leqslant sF(\overline{\rho_0})+\frac{(T-s)}{T} \int_0^T  F(\rho_t)  \ddr t + G(\rho_T).
\end{equation*}
To handle the action of $\tilde{\rho}^s$, we remark thanks to the third point of Proposition \ref{proposition_properties_heat_flow} and the representation formula \eqref{equation_representation_A_sup} that applying uniformly the heat flow decreases the action. Hence, performing a affine change of variables on $\intervalleff{s}{T}$, 
\begin{align*}
\int_0^T | \dot{\tilde{\rho}}^s_t |^2 \ddr t & = \int_0^s | \dot{\Phi_t \rho_0} |^2 \ddr t + \frac{T}{T-s} \int_0^T | \dot{\Phi_s \rho_t} |^2 \ddr t \\
& \leqslant U_1(\rho_0) - U_1(\Phi_s \rho_0) +  \frac{T}{T-s} \int_0^T |\dot{\rho}_t|^2 \ddr t.
\end{align*}  
By lower semi-continuity of $U_1$ and as $U_1(\rho_0) = U_1(\overline{\rho_0})$ is finite, one concludes that 
\begin{equation*}
\limsup_{s \to 0} \int_0^T | \dot{\tilde{\rho}}^s_t |^2 \ddr t \leqslant  \int_0^T |\dot{\rho}_t|^2 \ddr t.  
\end{equation*}
Finally to handle the term involving the potentials, one uses, by continuity of the heat flow, that $\tilde{\rho}^s_t$ converges to $\rho_t$ for any $t \in \intervalleff{0}{1}$ as $s$ goes to $0$. As $\int_0^T |\dot{\tilde{\rho}}^s_t|^2 \ddr t$ is uniformly bounded, the family $(\tilde{\rho}^s)_{0 \leqslant s < T}$ is uniformly equicontinuous, hence $\tilde{\rho}^s$ converges uniformly to $\rho$ as $s \to 0$. This allows us to write 
\begin{equation*}
\lim_{s \to 0} \left[ \int_0^T \int_\Omega V \ddr \tilde{\rho}^s_t \ddr t + \int_\Omega W \ddr \tilde{\rho}^s_T \right] = \int_0^T \int_\Omega V \ddr \rho_t \ddr t + \int_\Omega W \ddr \rho_T.
\end{equation*}
Gluing all the inequalities that we have collected, we see that $\limsup_{s \to 0} \A(\tilde{\rho}^s) \leqslant \A(\rho)$. Hence, it is enough to take $\tilde{\rho} := \tilde{\rho}^s$ for $s$ small enough.
\end{proof}

Now, let us show how one can sample a continuous curve to get a discrete one that approximates it. 

\begin{prop}
\label{proposition_sampling}
Let $\rho \in \Gamma$ with $\rho_0 = \overline{\rho_0}$ be such that $\int_0^T U_1(\rho_t) \ddr t < + \infty$ and $\lambda > 0$ be fixed. For any $N \geqslant 1$ we can build a curve $\rho^N \in \Gamma_N$ with $\rho^N_0 = \overline{\rho_0}$ in such a way that 
\begin{equation*}
\limsup_{N \to + \infty} \A^{N, \lambda}(\rho^N) \leqslant \A(\rho) + \lambda \int_0^T U_1(\rho_t) \ddr t + \lambda_N U_1(\rho_T).
\end{equation*}
\end{prop}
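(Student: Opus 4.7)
The plan is to build $\rho^N$ by time-averaging $\rho$ over intervals of length $\tau$: set $\rho^N_0 := \overline{\rho_0}$, $\rho^N_{k\tau} := \tau^{-1}\int_{(k-1)\tau}^{k\tau} \rho_s \ddr s$ for $1 \leqslant k \leqslant N-1$, and $\rho^N_T := \rho_T$. Each $\rho^N_{k\tau}$ is a probability measure, hence $\rho^N \in \Gamma_N$, and by construction the boundary terms satisfy $\Psi(\rho^N_T) + \lambda_N U_1(\rho^N_T) = \Psi(\rho_T) + \lambda_N U_1(\rho_T)$ exactly. Since $E$ and $U_1$ are convex functionals on $\Prob(\Omega)$ (cf.\ Proposition \ref{proposition_continuity_functionals}), Jensen's inequality applied to measure-valued barycenters yields $\tau E(\rho^N_{k\tau}) \leqslant \int_{(k-1)\tau}^{k\tau} E(\rho_s)\ddr s$ and $\tau U_1(\rho^N_{k\tau}) \leqslant \int_{(k-1)\tau}^{k\tau} U_1(\rho_s)\ddr s$, so summing gives the interior energy and entropy contributions bounded respectively by $\int_0^T E(\rho_t)\ddr t$ and $\int_0^T U_1(\rho_t)\ddr t$.

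The delicate step is to extract the sharp constant $\tfrac{1}{2}$ in front of the Wasserstein sum. For an interior index $2 \leqslant k \leqslant N-1$ I would use a \emph{translation coupling}: if $\gamma_s$ is an optimal plan between $\rho_s$ and $\rho_{s+\tau}$ for $s \in [(k-2)\tau, (k-1)\tau]$, then $\tau^{-1}\int \gamma_s \ddr s$ is a valid coupling between $\rho^N_{(k-1)\tau}$ and $\rho^N_{k\tau}$, giving
\begin{equation*}
W_2^2\bigl(\rho^N_{(k-1)\tau}, \rho^N_{k\tau}\bigr) \leqslant \frac{1}{\tau}\int_{(k-2)\tau}^{(k-1)\tau} W_2^2(\rho_s, \rho_{s+\tau})\ddr s \leqslant \int_{(k-2)\tau}^{(k-1)\tau}\int_s^{s+\tau} |\dot\rho_r|^2 \ddr r \ddr s,
\end{equation*}
where the second inequality uses $W_2^2(\rho_s, \rho_{s+\tau}) \leqslant \tau \int_s^{s+\tau}|\dot\rho_r|^2\ddr r$. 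Fubini rewrites this as $\int_0^T |\dot\rho_r|^2 h_k(r)\ddr r$ with $h_k(r) := \bigl|[r-\tau,r] \cap [(k-2)\tau,(k-1)\tau]\bigr|$, and the telescoping identity $\sum_{k=2}^{N-1} h_k(r) = \bigl|[r-\tau,r] \cap [0,(N-2)\tau]\bigr| \leqslant \tau$ yields $\sum_{k=2}^{N-1} W_2^2(\rho^N_{(k-1)\tau},\rho^N_{k\tau})/(2\tau) \leqslant \tfrac{1}{2}\int_0^T |\dot\rho_r|^2\ddr r$, which is the correct limit value.

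The remaining two Wasserstein terms, at $k=1$ and $k=N$, are handled by direct bounds: the convexity of $W_2^2$ combined with $W_2^2(\rho_0, \rho_s) \leqslant s\int_0^s|\dot\rho_r|^2\ddr r$ gives $W_2^2(\rho^N_0, \rho^N_\tau)/(2\tau) \leqslant \tfrac{1}{2}\int_0^\tau |\dot\rho_r|^2\ddr r$, with an analogous estimate of order $\int_{(N-2)\tau}^T |\dot\rho_r|^2\ddr r$ at the final time; both vanish as $N\to+\infty$ by the integrability of $|\dot\rho|^2$. Assembling all contributions and taking $\limsup_N$ delivers the claimed bound.

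The main obstacle is precisely the Wasserstein estimate above. A naive use of the joint convexity of $W_2^2$ would bound $W_2^2(\rho^N_{(k-1)\tau},\rho^N_{k\tau})$ by the \emph{double} average of $W_2^2(\rho_s, \rho_t)$ over $(s,t) \in [(k-2)\tau,(k-1)\tau]\times[(k-1)\tau,k\tau]$; the resulting integration domain $[(k-2)\tau, k\tau]$ would then be counted twice when summed in $k$, losing a factor two and ruining the comparison with the continuous action. The translation coupling $s \mapsto s+\tau$ preserves the time-shift correlation between the two averages and is what restores the sharp constant $\tfrac{1}{2}$.
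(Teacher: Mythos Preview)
Your proof is correct and takes a genuinely different route from the paper. The paper \emph{samples} $\rho$ on a shifted grid: it chooses $s_N \in (0,\tau)$ by a mean-value argument so that $\tau\sum_{k=1}^{N-1}(F+\lambda U_1)(\rho_{k\tau+s_N})$ is bounded by the continuous integral, then sets $\rho^N_{k\tau}:=\rho_{k\tau+s_N}$ (with endpoints unchanged). Because these are actual values of $\rho$, the Wasserstein sum is handled directly via $W_2^2(\rho_t,\rho_s)\leqslant (s-t)\int_t^s|\dot\rho_r|^2\ddr r$, the only care being the slightly irregular first and last intervals. You instead \emph{average}: your $\rho^N_{k\tau}$ is the barycenter of $\rho$ over $[(k-1)\tau,k\tau]$, so the energy and entropy bounds come for free from Jensen, with no need to select a shift. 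The cost is that the Wasserstein estimate now requires the translation-coupling trick you describe, which is exactly what recovers the sharp constant $\tfrac12$ that a naive use of joint convexity of $W_2^2$ would lose by a factor of two. In short, the paper puts the work into choosing the sample points; you put it into the Wasserstein coupling. Both arguments need the two endpoint Wasserstein terms handled separately and shown to vanish, which you do.

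One small technical remark: your translation coupling needs either a measurable selection $s\mapsto\gamma_s$ of optimal plans (available by standard selection theorems, since $s\mapsto(\rho_s,\rho_{s+\tau})$ is continuous and optimal plans form a compact set), or a direct appeal to the joint convexity of $W_2^2$ in both marginals, which is equivalent. You should state which route you take.
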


\noindent We recall that $\lambda_N = 0$ by default except if Assumption \ref{assumption_temporal_bc} holds. 

\begin{proof}
We can assume $\A(\rho) < + \infty$. The idea is to sample $\rho$ on a grid translated w.r.t. $T^N$. We start with the following observation. 
\begin{align*}
\int_0^\tau \sum_{k=1}^{N-1} \left( F(\rho_{k \tau + s}) + \lambda U_1(\rho_{k \tau + s}) \right) \ddr s & = \int_0^{T-\tau} \left( F(\rho_t) + \lambda U_1(\rho_t) \right) \ddr t \\
& \leqslant \int_0^{T} \left( F(\rho_t) + \lambda U_1(\rho_t) \right) \ddr t + C \tau, 
\end{align*}
where $C$ depends only on the lower bounds of $F$ and $U_1$. Therefore, there exists $s_N \in \intervalleoo{0}{\tau}$ such that 
\begin{equation*}
\tau \sum_{k=1}^{N-1} \left( F(\rho_{k \tau + s_N}) + \lambda U_1(\rho_{k \tau + s_N}) \right) \leqslant \int_0^{T} \left( F(\rho_t) + \lambda U_1(\rho_t) \right) + C \tau.  
\end{equation*} 
Let us define $\rho^N \in \Gamma_N$ by sampling $\rho$ on the grid translated by $s_N$: for any $k \in \{ 0, 1, \ldots, N \}$,
\begin{equation*}
\rho^N := \begin{cases}
\rho_0 & \text{if } k = 0 \\
\rho_T & \text{if } k = N \\
\rho_{k \tau + s_N} & \text{if } 1 \leqslant k \leqslant N-1
\end{cases}.  
\end{equation*}
As the boundary values are left unchanged and given the choice of $s_N$, it is clear that
\begin{equation*}
\left( \A(\rho) + \lambda \int_0^T U_1(\rho_t) \ddr t + \lambda_N U_1(\rho_T) \right) - \A^{N,\lambda}(\rho^N)  \geqslant \int_0^T \frac{1}{2} |\dot{\rho}_t|^2 \ddr t - \sum_{k=1}^N \frac{W_2^2(\rho^N_{(k-1) \tau}, \rho^N_{k \tau})}{2 \tau} - C \tau.
\end{equation*} 
The r.h.s. of the above equation is delicate to evaluate because of the non uniformity of the grid near the boundaries. Recall that if $t \leqslant s$ then $W_2^2(\rho_t, \rho_s) \leqslant (s-t) \int_t^s |\dot{\rho}_r|^2 \ddr r$, hence 
\begin{align*}
\sum_{k=1}^N \frac{W_2^2(\rho^N_{(k-1) \tau}, \rho^N_{k \tau})}{2 \tau} & = \frac{W_2^2(\rho_0, \rho_{\tau + s_N})}{2 \tau} + \sum_{k=2}^{N-1} \frac{W_2^2(\rho_{(k-1) \tau + s_N}, \rho_{k \tau + s_N})}{2 \tau} + \frac{W_2^2(\rho_{(k-1) \tau + s_N}, \rho_T)}{2 \tau} \\
& \leqslant \frac{\tau + s_N}{2\tau} \int_0^{\tau + s_N} \frac{1}{2} |\dot{\rho}_t|^2 \ddr t + \sum_{k=2}^{N-1} \int_{(k-1) \tau +s_n}^{k \tau + s_N} \frac{1}{2} |\dot{\rho}_t|^2 \ddr t + \frac{\tau - s_N}{2\tau} \int_{T - \tau + s_N}^T \frac{1}{2} |\dot{\rho}_t|^2 \ddr t \\
& \leqslant \int_0^{\tau + s_N} |\dot{\rho}_t|^2 \ddr t + \int_{\tau + s_N}^{T- \tau + s_N} \frac{1}{2} |\dot{\rho}_t|^2 \ddr t + \int_{T-\tau + s_N}^{T} \frac{1}{2} |\dot{\rho}_t|^2 \ddr t \\ 
& \leqslant \int_0^T \frac{1}{2} |\dot{\rho}_t|^2 \ddr t + \int_0^{2 \tau} \frac{1}{2} |\dot{\rho}_t|^2 \ddr t.  
\end{align*}
In particular, we have used $\tau + s_N \leqslant 2 \tau$ and $\tau -s_N \leqslant \tau$. Letting $N \to + \infty$ (hence $\tau \to 0$), we end up with 
\begin{equation*}
\limsup_{N \to + \infty} \sum_{k=1}^N \frac{W_2^2(\rho^N_{(k-1) \tau}, \rho^N_{k \tau})}{2 \tau} \leqslant \int_0^T \frac{1}{2} |\dot{\rho}_t|^2 \ddr t,  
\end{equation*} 
and this is enough to conclude. 
\end{proof}

\begin{crl}
\label{corollary_uniform_bounds}
Under the assumptions of Theorems \ref{theorem_L_infty_bounds_strong}, \ref{theorem_L_infty_bounds_boundary} or \ref{theorem_L_infty_bounds_weak}, there exists $C < + \infty$ such that, uniformly in $N \geqslant 1$ and $\lambda \in \intervalleof{0}{1}$, one has 
\begin{equation*}
\A^{N, \lambda}(\bar{\rho}^{N, \lambda}) \leqslant C.
\end{equation*} 
\end{crl}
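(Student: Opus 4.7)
The plan is to exhibit, for each $N \geqslant 1$ and $\lambda \in \intervalleof{0}{1}$, a competitor $\rho^N \in \Gamma_N$ with $\rho^N_0 = \overline{\rho_0}$ whose discrete action $\A^{N, \lambda}(\rho^N)$ is bounded by a constant depending only on $\Omega, f, g, V, W, T$ and on the data of the continuous problem; minimality of $\bar{\rho}^{N, \lambda}$ then gives the stated uniform bound. The main tool is Proposition \ref{proposition_sampling}: from any $\rho \in \Gamma$ with $\rho_0 = \overline{\rho_0}$ and $\int_0^T U_1(\rho_t) \ddr t < +\infty$ it produces $\rho^N \in \Gamma_N$ satisfying
\begin{equation*}
\A^{N, \lambda}(\rho^N) \leqslant \A(\rho) + \lambda \int_0^T U_1(\rho_t) \ddr t + \lambda_N U_1(\rho_T) + \varepsilon_N,
\end{equation*}
where $\varepsilon_N$ is bounded uniformly in $\lambda$ (and in fact $\varepsilon_N \to 0$, although this is not needed). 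Since $\lambda, \lambda_N \leqslant 1$, this delivers the required uniform bound as soon as we can pick $\rho$ with $\A(\rho) < +\infty$ satisfying (i) $\int_0^T U_1(\rho_t) \ddr t < +\infty$ and, whenever Assumption \ref{assumption_temporal_bc} is in force so that $\lambda_N = \lambda$, also (ii) $U_1(\rho_T) < +\infty$.

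Starting from a $\rho$ with $\A(\rho) < +\infty$ given by hypothesis, I would verify (i) in the two congestion regimes separately. In the strong-congestion cases, integrating the bound $f''(t) \geqslant C_f t^\alpha$ ($\alpha \geqslant -1$) twice over $\intervalleff{t_0}{t}$ and using that $u_1$ is bounded on $\intervalleff{0}{t_0}$ yields the pointwise estimate $f(t) \geqslant c\, u_1(t) - C$, hence $F(\rho) \geqslant c\, U_1(\rho) - C'$ for every $\rho \in \Prob(\Omega)$; therefore $\int_0^T U_1(\rho_t)\ddr t$ is controlled by $\int_0^T F(\rho_t)\ddr t < +\infty$. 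In the weak-congestion setting this comparison is lost, but the hypotheses of Theorem \ref{theorem_L_infty_bounds_weak}---in particular $\overline{\rho_0} \in L^{m_0}$ with $F(\overline{\rho_0}) < +\infty$ together with Assumption \ref{assumption_temporal_bc}---are precisely those of Proposition \ref{proposition_regularization_curve}: replacing $\rho$ by the regularized $\tilde{\rho}$ yields $\A(\tilde{\rho}) \leqslant \A(\rho) + 1$ and $U_1(\tilde{\rho}_t)$ bounded on $\intervalleff{0}{T}$, which covers (i) and (ii) at once.

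The remaining, and most delicate, case is Theorem \ref{theorem_L_infty_bounds_boundary}: strong congestion and Assumption \ref{assumption_temporal_bc} are both in force (so (ii) is needed), but no a priori bound on $U_1(\overline{\rho_0})$ or $F(\overline{\rho_0})$ is available, so Proposition \ref{proposition_regularization_curve} does not apply directly. Here the plan is a splitting argument: because (i) holds by the previous paragraph, one has $U_1(\rho_t) < +\infty$ and $F(\rho_t) < +\infty$ for almost every $t$, so I pick an arbitrarily small $t_\ast \in \intervalleoo{0}{T}$ at which both are finite, keep $\rho$ on $\intervalleff{0}{t_\ast}$, and replace it on $\intervalleff{t_\ast}{T}$ by the heat-flow regularization of Proposition \ref{proposition_regularization_curve} applied on that subinterval with initial datum $\rho_{t_\ast}$. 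Gluing at $t_\ast$ produces a curve with action within a prescribed $\varepsilon$ of $\A(\rho)$ that satisfies both (i) and (ii); this splitting-at-$t_\ast$ trick is the main technical point of the argument, and its legitimacy rests on the strong-congestion inequality $F \geqslant c U_1 - C$, which guarantees the existence of such a $t_\ast$ arbitrarily close to $0$. Once the competitor is in hand, Proposition \ref{proposition_sampling} concludes the proof.
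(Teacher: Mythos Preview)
Your proposal is correct and follows the paper's own strategy: pick a curve $\rho$ with $\A(\rho)<+\infty$ and controlled entropy, then feed it into Proposition \ref{proposition_sampling} to produce uniformly bounded competitors. You are in fact more scrupulous than the paper in the case of Theorem \ref{theorem_L_infty_bounds_boundary}: since $\lambda_N=\lambda$ there, Proposition \ref{proposition_sampling} also needs $U_1(\rho_T)<+\infty$, which the paper's proof does not address (the bound $U_1\leqslant C_f F+C$ only controls $\int_0^T U_1(\rho_t)\ddr t$, and $\Psi(\rho_T)<+\infty$ for a merely superlinear $g$ need not force $U_1(\rho_T)<+\infty$); your splitting-at-$t_\ast$ argument is a legitimate way to fill this small gap.
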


\begin{proof}
If we are under the assumptions of Theorems \ref{theorem_L_infty_bounds_strong} ot \ref{theorem_L_infty_bounds_boundary}, we take $\rho \in \Gamma$ such that $\A(\rho) < + \infty$. As $U_1 \leqslant C_f F + C$, we see that $\int_0^T U_1(\rho_t) \ddr t < + \infty$. If we are under the assumptions of \ref{theorem_L_infty_bounds_weak}, we take $\rho \in \Gamma$ such that $\A(\rho) < + \infty$ and regularize it thanks to Proposition \ref{proposition_regularization_curve}. For this regularized curve, one has $\int_0^T U_1(\rho_t) \ddr t + \lambda U_1(\rho_T) < + \infty$.  

In any of these two cases, we construct $\rho^N$ as in Proposition \ref{proposition_sampling} and define $C:=\sup_{N \geqslant 1} \A^{N,\lambda}(\rho^N)$, then we use the fact that $\A^{N, \lambda}(\bar{\rho}^{N, \lambda}) \leqslant \A^{N,\lambda}(\rho^N) \leqslant C$.  
\end{proof}

\subsection{Solution of the continuous problem as limit of discrete curves}

We will build a suitable interpolation of the discrete curves $\bar{\rho}^{N, \lambda}$ that will converge to some continuous curve $\bar{\rho}$ as $N \to + \infty$ and $\lambda \to 0$, and we will show that $\bar{\rho}$ is a solution of \eqref{equation_continuous problem}.

As the order in which the limits $N \to + \infty$ and $\lambda \to 0$ are taken does not matter, we will do them in the same time. We take two sequences $(N_n)_{n \in \N}$ and $(\lambda_n)_{n \in \N}$ that go respectively to $+ \infty$, and $0$ (the second one being strictly positive). We will not relabel the sequences when extracting subsequences. Moreover, to avoid heavy notations, we will drop the index $n$, and $\lim_{n \to + \infty}$ will be denoted by $\lim_{N \to + \infty, \lambda \to 0}$. We will need to define two kind of interpolations: one filling the gaps with constant-speed geodesics, and the other one by using piecewise constant curves.  

\begin{defi}
If $N \geqslant 1$ and $\lambda > 0$, we define $\hat{\rho}^{N, \lambda} \in \Gamma$ as the curve such that $\hat{\rho}^{N, \lambda}$ coincides with $\bar{\rho}^{N, \lambda}$ on $T^N$, and such that for any $k \in \{ 0, 1, \ldots, N-1 \}$, the restriction of $\hat{\rho}^{N, \lambda}$ to $\intervalleff{k \tau}{(k+1 \tau)}$ is the constant-speed geodesic joining $\bar{\rho}^{N, \lambda}_{k \tau}$ to $\bar{\rho}^{N, \lambda}_{(k+1) \tau}$.  
\end{defi}

\noindent As $\bar{\rho}^{N, \lambda}_{k \tau}$ is absolutely continuous w.r.t. $\Leb$ for any $k \in \{ 1,2, \ldots, N-1 \}$, the constant-speed geodesic between $\bar{\rho}^{N, \lambda}_{k \tau}$ and $\bar{\rho}^{N, \lambda}_{(k \pm 1) \tau}$ is always unique. From the characterization of constant-speed geodesics, one has, for any $k \in \{ 0, 1, \ldots, N-1 \}$, 
\begin{equation*}
\int_{k \tau}^{(k+1) \tau} \frac{1}{2} \left| \dot{\hat{\rho}}^{N, \lambda}_t \right|^2 \ddr t = \frac{W_2^2( \bar{\rho}^{N, \lambda}_{k \tau}, \bar{\rho}^{N, \lambda}_{(k+1) \tau} )}{2 \tau}.
\end{equation*}
Summing these identities over $k$, 
\begin{equation}
\label{equation_identity_action_discrete_continuous}
\int_0^T \frac{1}{2} \left| \dot{\hat{\rho}}^{N, \lambda}_t \right|^2 \ddr t = \sum_{k=1}^N \frac{W_2^2(\bar{\rho}^{N, \lambda}_{(k-1) \tau}, \bar{\rho}^{N, \lambda}_{k \tau})}{2 \tau}.
\end{equation}
In other words, the continuous action of the interpolated curve $\hat{\rho}^{N, \lambda}$ is equal to the discrete action of the discrete curve $\bar{\rho}^{N, \lambda}$. 

\begin{defi}
If $N \geqslant 1$ and $\lambda > 0$, we define $\tilde{\rho}^{N, \lambda} : \intervalleff{0}{T} \to \Prob(\Omega)$ as the function such that $\tilde{\rho}^{N, \lambda}$ coincides with $\bar{\rho}^{N, \lambda}$ on $T^N$, and such that for any $k \in \{ 0, 1, \ldots, N-1 \}$, the restriction of $\tilde{\rho}^{N, \lambda}$ to $\intervallefo{k \tau}{(k+1 \tau)}$ is equal to $\bar{\rho}^{N, \lambda}_{k \tau}$.
\end{defi}

\noindent The curve $\tilde{\rho}^{N, \lambda}$ is not continuous as it might admit discontinuities at every point in $T^N$. Let us underline that the following identity trivially holds: 
\begin{equation}
\label{equation_identity_integrals_discrete_continuous}
\sum_{k=1}^{N-1} \tau \left( F(\bar{\rho}^{N, \lambda}_{k \tau}) + \int_\Omega V \ddr \bar{\rho}^{N, \lambda}_{k \tau} \right) = \int_0^{T- \tau} \left( F(\tilde{\rho}^{N, \lambda}_t) + \int_\Omega V \ddr \tilde{\rho}^{N, \lambda}_t  \right) \ddr t
\end{equation}

\begin{prop}
Under the assumptions of Theorems \ref{theorem_L_infty_bounds_strong}, \ref{theorem_L_infty_bounds_boundary} or \ref{theorem_L_infty_bounds_weak}, there exists $\bar{\rho} \in \Gamma$ such that $\hat{\rho}^{N, \lambda}$ and $\tilde{\rho}^{N, \lambda}$ converge uniformly to $\bar{\rho}$ as $N \to + \infty$ and $\lambda \to 0$. 
\end{prop}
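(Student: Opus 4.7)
The plan is to combine compactness with a $\Gamma$-convergence-type identification of the limit, then upgrade subsequential convergence to full convergence by uniqueness of the continuous minimizer.

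\emph{Compactness.} Corollary~\ref{corollary_uniform_bounds} provides the uniform bound $\A^{N,\lambda}(\bar{\rho}^{N,\lambda}) \leqslant C$, and identity \eqref{equation_identity_action_discrete_continuous} turns this into a uniform bound $\int_0^T \tfrac{1}{2}|\dot{\hat{\rho}}^{N,\lambda}_t|^2 \ddr t \leqslant C'$ on the continuous action of the geodesic interpolation. Proposition~\ref{proposition_properties_action} then yields compactness of the corresponding sublevel set in $(\Gamma,d)$, so up to extraction $\hat{\rho}^{N,\lambda}$ converges uniformly to some $\bar{\rho} \in \Gamma$ with $\bar{\rho}_0 = \overline{\rho_0}$. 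The piecewise-constant interpolation converges to the same limit: on each interval $\intervallefo{k\tau}{(k+1)\tau}$, since $\hat{\rho}^{N,\lambda}$ is a constant-speed geodesic,
\begin{equation*}
W_2\bigl(\hat{\rho}^{N,\lambda}_t, \tilde{\rho}^{N,\lambda}_t\bigr) \leqslant W_2\bigl(\bar{\rho}^{N,\lambda}_{k\tau}, \bar{\rho}^{N,\lambda}_{(k+1)\tau}\bigr) \leqslant \sqrt{2 C' \tau},
\end{equation*}
uniformly in $t$, so $\tilde{\rho}^{N,\lambda} \to \bar{\rho}$ uniformly as well.

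\emph{Identification as the unique minimizer.} I would show that $\bar{\rho}$ solves \eqref{equation_continuous problem} via a two-sided estimate. The liminf inequality $\A(\bar{\rho}) \leqslant \liminf \A^{N,\lambda}(\bar{\rho}^{N,\lambda})$ combines: the lower semicontinuity of the kinetic term through Proposition~\ref{proposition_properties_action} applied to $\hat{\rho}^{N,\lambda}$ (using \eqref{equation_identity_action_discrete_continuous}); the lower semicontinuity of $F$ (Proposition~\ref{proposition_continuity_functionals}) together with Fatou's lemma, applied to the pointwise convergence $\tilde{\rho}^{N,\lambda}_t \to \bar{\rho}_t$ and identity \eqref{equation_identity_integrals_discrete_continuous}; the weak-$*$ continuity of $\rho \mapsto \int_\Omega V \ddr \rho$ (since $V$ is Lipschitz and $\Omega$ compact); and the lower semicontinuity of $\Psi$ evaluated at $\bar{\rho}^{N,\lambda}_T \to \bar{\rho}_T$. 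For the matching upper bound, given an arbitrary competitor $\rho \in \Gamma$ with $\rho_0 = \overline{\rho_0}$ and $\A(\rho) < +\infty$---first regularized via Proposition~\ref{proposition_regularization_curve} in the weak congestion case to secure $\int_0^T U_1(\rho_t) \ddr t < +\infty$---Proposition~\ref{proposition_sampling} produces $\rho^N \in \Gamma_N$ with $\limsup_{N,\lambda} \A^{N,\lambda}(\rho^N) \leqslant \A(\rho)$, and the optimality of $\bar{\rho}^{N,\lambda}$ gives $\A^{N,\lambda}(\bar{\rho}^{N,\lambda}) \leqslant \A^{N,\lambda}(\rho^N)$. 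Chaining the two inequalities forces $\A(\bar{\rho}) \leqslant \A(\rho)$ for every admissible $\rho$, so $\bar{\rho}$ is the unique minimizer.

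\emph{Full convergence and main obstacle.} Since every subsequence of the original family admits a further subsequence converging uniformly to the \emph{same} unique minimizer, the entire families $\hat{\rho}^{N,\lambda}$ and $\tilde{\rho}^{N,\lambda}$ converge uniformly to $\bar{\rho}$. The technically delicate step is the liminf for the congestion term: identity \eqref{equation_identity_integrals_discrete_continuous} relates the discrete sum to an integral over $[0,T]$ only up to a boundary slice of length $\tau$ carrying a contribution of the form $\tau F(\bar{\rho}^{N,\lambda}_{k\tau})$ at the endpoint. This vanishes at $t=0$ thanks to the assumed finiteness of $F(\overline{\rho_0})$, and at $t=T$ either by the same argument under Assumption~\ref{assumption_temporal_bc} (where the final penalization controls $F(\bar{\rho}^{N,\lambda}_T)$) or by absorbing a uniformly bounded correction using the lower bound on $F$, after which Fatou on the full interval recovers the desired inequality.
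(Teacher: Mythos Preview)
Your proof is correct, but you do substantially more than the paper does for this proposition. In the paper, the proof of this statement is \emph{only} the compactness step: use Corollary~\ref{corollary_uniform_bounds} and identity \eqref{equation_identity_action_discrete_continuous} to bound the action of $\hat{\rho}^{N,\lambda}$, extract a uniform limit via Proposition~\ref{proposition_properties_action}, and then estimate $W_2(\hat{\rho}^{N,\lambda}_t,\tilde{\rho}^{N,\lambda}_t)\leqslant\sqrt{2C_1\tau}$ to conclude that $\tilde{\rho}^{N,\lambda}$ has the same limit. The identification of $\bar{\rho}$ as the solution of \eqref{equation_continuous problem} is the content of the \emph{next} proposition in the paper; the authors work along fixed sequences $(N_n,\lambda_n)$ and simply extract without relabeling, so for them ``convergence'' here means along a subsequence.

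What you gain by packaging the two steps together is the explicit upgrade from subsequential to full convergence via uniqueness of the continuous minimizer, which the paper leaves implicit. That is a genuine improvement in presentation. One minor remark on your ``main obstacle'' paragraph: the discrete functional $\A^{N,\lambda}$ sums $F(\bar{\rho}^{N,\lambda}_{k\tau})$ only over $1\leqslant k\leqslant N-1$, so there is no endpoint term $\tau F(\bar{\rho}^{N,\lambda}_0)$ or $\tau F(\bar{\rho}^{N,\lambda}_T)$ to control in the discrete sum. The actual mismatch is between $\int_0^T F(\bar{\rho}_t)\,\mathrm{d}t$ and the Riemann-type sum, and the paper handles it more simply by first proving $\int_0^{T-\tau_0}F(\bar{\rho}_t)\,\mathrm{d}t\leqslant\liminf\sum_{k=1}^{N-1}\tau F(\bar{\rho}^{N,\lambda}_{k\tau})$ for every fixed $\tau_0>0$ via Fatou, and then letting $\tau_0\to 0$. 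This avoids any appeal to $F(\overline{\rho_0})<+\infty$ or to the structure of $\Psi$.
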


\begin{proof}
Let us denote by $C$ the constant given in Corollary \ref{corollary_uniform_bounds}. As all the terms in $\A^{N, \lambda}$ are bounded from below and given identity \eqref{equation_identity_action_discrete_continuous}, one can see that there exists $C_1$ such that 
\begin{equation*}
\int_0^T \frac{1}{2} \left| \dot{\hat{\rho}}^{N, \lambda}_t \right|^2 \ddr t \leqslant C_1
\end{equation*}  
uniformly in $N \geqslant 1$ and $\lambda \in \intervalleof{0}{1}$. Thus, by compactness of the sublevel sets of the action (Proposition \ref{proposition_properties_action}), one concludes of the existence of $\bar{\rho} \in \Gamma$ such that $\hat{\rho}^{N, \lambda}$ converges uniformly (up to extraction) to $\bar{\rho}$ as $N \to + \infty$ and $\lambda \to 0$. Moreover, one can see that for any $t \in \intervalleff{0}{T}$ and any $N \geqslant 1$, by setting $k$ to be the largest integer such that $k \tau \leqslant t$, one has
$$
W_2 \left( \hat{\rho}^{N, \lambda}_t, \tilde{\rho}^{N, \lambda}_t \right) = W_2 \left( \hat{\rho}^{N, \lambda}_t, \hat{\rho}^{N, \lambda}_{k \tau} \right)  \leqslant \sqrt{\tau} \sqrt{ \int_{k \tau}^t \left| \dot{\hat{\rho}}^{N, \lambda}_s \right|^2 \ddr s} \leqslant \sqrt{2 C_1 \tau}. 
$$
This allows to conclude that $\tilde{\rho}^{N, \lambda}$ also converges uniformly to $\bar{\rho}$ as $N \to + \infty$ and $\lambda \to 0$. 
\end{proof}

\begin{prop}
Under the assumptions of Theorems \ref{theorem_L_infty_bounds_strong}, \ref{theorem_L_infty_bounds_boundary} or \ref{theorem_L_infty_bounds_weak}, the curve $\bar{\rho}$ is the solution to the continuous problem \eqref{equation_continuous problem}. 
\end{prop}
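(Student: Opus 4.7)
The plan is to combine a standard $\Gamma$-convergence-style argument with the optimality already built into the discrete problems. Since \eqref{equation_continuous problem} admits a unique minimizer, it suffices to establish $\A(\bar{\rho}) \leqslant \A(\rho)$ for every admissible competitor $\rho$. Admissibility of $\bar{\rho}$, namely $\bar{\rho}_0 = \overline{\rho_0}$, is immediate from the uniform convergence $\hat{\rho}^{N,\lambda} \to \bar{\rho}$ and the fact that $\hat{\rho}^{N,\lambda}_0 = \overline{\rho_0}$ for every $N,\lambda$.

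The first step is the liminf inequality
$$\A(\bar{\rho}) \leqslant \liminf_{N \to +\infty,\ \lambda \to 0} \A^{N,\lambda}(\bar{\rho}^{N,\lambda}),$$
obtained by treating each contribution separately. For the kinetic term, identity \eqref{equation_identity_action_discrete_continuous} rewrites it as $\int_0^T \frac{1}{2}|\dot{\hat{\rho}}^{N,\lambda}_t|^2 \ddr t$, and Proposition \ref{proposition_properties_action} together with the uniform convergence $\hat{\rho}^{N,\lambda} \to \bar{\rho}$ delivers the desired bound. For the running-cost term, identity \eqref{equation_identity_integrals_discrete_continuous} rewrites it as $\int_0^{T-\tau}\bigl[F(\tilde{\rho}^{N,\lambda}_t) + \int_\Omega V \ddr \tilde{\rho}^{N,\lambda}_t\bigr]\ddr t$; since $F$ and $V$ are bounded below the missing slice $[T-\tau,T]$ contributes at most $O(\tau) \to 0$, the linear term in $V$ passes to the limit by continuity of $\rho \mapsto \int_\Omega V \ddr \rho$ for the weak-* topology, and the $F$-term is handled by pointwise (in $t$) weak-* convergence $\tilde{\rho}^{N,\lambda}_t \to \bar{\rho}_t$ combined with lower semi-continuity of $F$ (Proposition \ref{proposition_continuity_functionals}) and Fatou's lemma applied to the nonnegative quantity $F(\tilde{\rho}^{N,\lambda}_t) - \inf F$. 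For the terminal term, $\bar{\rho}^{N,\lambda}_T = \hat{\rho}^{N,\lambda}_T \to \bar{\rho}_T$ and the lower semi-continuity of $\Psi$ yield $\Psi(\bar{\rho}_T) \leqslant \liminf \Psi(\bar{\rho}^{N,\lambda}_T)$. Finally, the entropic corrections $\lambda \sum_k \tau U_1(\bar{\rho}^{N,\lambda}_{k\tau}) + \lambda_N U_1(\bar{\rho}^{N,\lambda}_T)$ are nonnegative (as $U_1 \geqslant 1$ by Jensen) and may simply be discarded when taking the liminf.

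The second step is the limsup inequality: for any admissible $\rho \in \Gamma$ with $\A(\rho) < +\infty$ we show
$$\limsup_{N \to +\infty,\ \lambda \to 0} \A^{N,\lambda}(\bar{\rho}^{N,\lambda}) \leqslant \A(\rho).$$
We first reduce to the case where $\int_0^T U_1(\rho_t)\ddr t$ and (if needed) $U_1(\rho_T)$ are finite: under Assumption \ref{assumption_strong_congestion} or \ref{assumption_strong_congestion-variant} the inequality $U_1 \leqslant C_f F + C$ gives this integrability for free from $\A(\rho) < +\infty$, while under Assumption \ref{assumption_weak_congestion} we apply Proposition \ref{proposition_regularization_curve} to replace $\rho$ by a curve whose entropy is bounded uniformly in $t$, at the cost of an arbitrarily small increase in $\A$. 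Proposition \ref{proposition_sampling} then produces competitors $\rho^N \in \Gamma_N$ satisfying $\limsup_N \A^{N,\lambda}(\rho^N) \leqslant \A(\rho) + \lambda \int_0^T U_1(\rho_t) \ddr t + \lambda_N U_1(\rho_T)$, and optimality of $\bar{\rho}^{N,\lambda}$ for \eqref{equation_discrete_problem} gives $\A^{N,\lambda}(\bar{\rho}^{N,\lambda}) \leqslant \A^{N,\lambda}(\rho^N)$. Since $\lambda \to 0$ jointly with $N \to +\infty$, the entropic corrections vanish.

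Chaining the two inequalities yields $\A(\bar{\rho}) \leqslant \A(\rho)$ for every admissible $\rho$, so $\bar{\rho}$ is a minimizer and uniqueness (already proven) identifies it with the solution of \eqref{equation_continuous problem}. The main technical nuisance is the liminf step for the running cost: the interpolation $\tilde{\rho}^{N,\lambda}$ is only piecewise constant and jumps at each $t = k\tau$, and the natural integration domain coming from \eqref{equation_identity_integrals_discrete_continuous} misses a $\tau$-long slice near $t=T$. This is why one needs both the lower bound on $F$ (to discard the missing slice) and a Fatou argument applied to $F(\tilde{\rho}^{N,\lambda}_t) - \inf F$ rather than the stronger claim of pointwise convergence of the $F$-values.
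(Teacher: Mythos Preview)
Your proof is correct and follows essentially the same approach as the paper: a liminf inequality obtained by lower semicontinuity of each term (using the geodesic interpolation $\hat{\rho}^{N,\lambda}$ for the kinetic part and the piecewise-constant interpolation $\tilde{\rho}^{N,\lambda}$ for the running cost via Fatou), and a limsup inequality obtained from Proposition~\ref{proposition_sampling} combined with the optimality of $\bar{\rho}^{N,\lambda}$, after regularizing via Proposition~\ref{proposition_regularization_curve} when the entropy of the competitor is not already controlled. The only cosmetic difference is that the paper phrases the second step as a contradiction argument while you argue directly; the content is identical.
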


\begin{proof}
Taking the limit $N \to + \infty$ and $\lambda \to 0$ in \eqref{equation_identity_action_discrete_continuous}, as the action is l.s.c., we end up with
\begin{equation*}
\int_0^T \frac{1}{2} | \dot{\bar{\rho}}_t |^2 \ddr t \leqslant \liminf_{N \to + \infty, \lambda \to 0} \sum_{k=1}^N \frac{W_2^2(\bar{\rho}^{N, \lambda}_{(k-1) \tau}, \bar{\rho}^{N, \lambda}_{k \tau})}{2 \tau}. 
\end{equation*}
Then, to handle the terms with the potential and the congestion, one can notice that for any $t \in \intervalleff{0}{T}$, by lower semi-continuity of $F$ and the convergence of $\tilde{\rho}^{N, \lambda}_t$ to $\bar{\rho}_t$, 
\begin{equation*}
F(\bar{\rho}_t) + \int_\Omega V \ddr \bar{\rho}_t \leqslant \liminf_{N \to + \infty, \lambda \to 0} F(\tilde{\rho}^{N, \lambda}_t) + \int_\Omega V \ddr \tilde{\rho}^{N, \lambda}_t.
\end{equation*}
Thus, using Fatou's lemma, as $F, V$ and $U_1$ are bounded from below, one has for any $\tau_0 > 0$, 
\begin{align*}
\int_0^{T- \tau_0} \left( F(\bar{\rho}_t) + \int_\Omega V \ddr \bar{\rho}_t  \right) & \leqslant \liminf_{N \to + \infty, \lambda \to 0} \int_0^{T- \tau} \left( F(\tilde{\rho}^{N, \lambda}_t) + \int_\Omega V \ddr \tilde{\rho}^{N, \lambda}_t  \right) \ddr t \\
& =  \liminf_{N \to + \infty, \lambda \to 0} \sum_{k=1}^{N-1} \tau \left( F(\bar{\rho}^{N, \lambda}_{k \tau}) + \int_\Omega V \ddr \bar{\rho}^{N, \lambda}_{k \tau}  \right) \ddr t \\
& \leqslant \liminf_{N \to + \infty, \lambda \to 0} \sum_{k=1}^{N-1} \tau \left( F(\bar{\rho}^{N, \lambda}_{k \tau}) + \int_\Omega V \ddr \bar{\rho}^{N, \lambda}_{k \tau} + \lambda U_1(\bar{\rho}^{N, \lambda}_{k \tau})  \right).
\end{align*}
In the equation above, $\tau_0$ is arbitrary thus it is still valid for $\tau_0 = 0$. As moreover the boundary penalization $\Psi$ is l.s.c. and the entropic penalization $\lambda_N U_1(\rho_T)$ is positive, one is allowed to write that 
\begin{equation*}
\A(\bar{\rho}) \leqslant \liminf_{N \to + \infty, \lambda \to 0} \A^{N, \lambda}( \bar{\rho}^{N, \lambda} ). 
\end{equation*} 

Let us assume by contradiction that there exists $\rho \in \Gamma$ such that $\A(\rho ) < \A(\bar{\rho})$. Using, if needed, Proposition \ref{proposition_regularization_curve}, we can assume without loss of generality that $\A(\rho) < \A(\bar{\rho})$ and $\int_0^T U_1(\rho_t) \ddr t + \lambda_N U_1(\rho_T) < + \infty$. Using Proposition \ref{proposition_sampling}, for any $N \geqslant 1$, we can build $\rho^N \in \Gamma_N$ in such a way that 
\begin{equation*}
\limsup_{N \to + \infty} \A^{N, \lambda}(\rho^N) \leqslant \A(\rho) + \lambda \int_0^T U_1(\rho_t) \ddr t + \lambda_N U_1(\rho_T).
\end{equation*}
Taking the limit $\lambda \to 0$, one can see that 
$$
\limsup_{N \to + \infty, \lambda \to 0} \A^{N, \lambda}(\rho^N)  \leqslant \A(\rho)< \A(\bar{\rho}) \leqslant \liminf_{N \to + \infty, \lambda \to 0} \A^{N, \lambda}( \bar{\rho}^{N, \lambda} ). 
$$
Taking $N$ large enough and $\lambda$ small enough, we conclude that $\A^{N, \lambda}(\rho^N) < \A^{N, \lambda}(\bar{\rho}^{N, \lambda})$, which is a contradiction with the optimality of $\bar{\rho}^{N, \lambda}$.
\end{proof}

\subsection{Uniform bounds on \texorpdfstring{$\bar{\rho}$}{rho}}

To conclude and prove the Theorems \ref{theorem_L_infty_bounds_strong}, \ref{theorem_L_infty_bounds_boundary} and \ref{theorem_L_infty_bounds_weak}, it is enough to show the $L^\infty$ bounds on $\bar{\rho}$, which of course we will prove using the discrete solutions $\bar{\rho}^{N, \lambda}$. The key is the following proposition. 

\begin{prop}
\label{proposition_norm_discrete_to_continuous}
Let $0 < T_1 < T_2 \leqslant T$. Then for any $0 <T'_1<T_1 $ and any $T_2 < T'_2 < T$ (or $T'_2 = T_2 = T$ in the case $T_2 = T$),
\begin{equation*}
\esssup_{T_1 \leqslant t \leqslant T_2,  x \in \Omega} |\bar{\rho}_t(x)| \leqslant \limsup_{m \to + \infty} L^m_{T'_1, T'_2 }.
\end{equation*}
\end{prop}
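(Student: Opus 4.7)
The plan is to pass to the limit in two stages: first from the discrete sums appearing in $L^m_{T'_1,T'_2}$ to the $L^m$ norm of $\bar\rho$ over $[T_1,T_2]\times\Omega$ (by lower semicontinuity), and then from the $L^m$ norm to the $L^\infty$ norm by sending $m\to\infty$. Throughout I work with $m>1$, so that $U_m(\rho)=\frac{1}{m(m-1)}\int_\Omega\rho^m$.

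First I would compare the discrete sum to an integral of the piecewise constant interpolation $\tilde\rho^{N,\lambda}$. By its very definition, on any interval of the form $[k\tau,(k+1)\tau)$ contained in $[T_1,T_2]$, $\tilde\rho^{N,\lambda}_t$ equals $\bar\rho^{N,\lambda}_{k\tau}$, and since $T'_1<T_1$ and $T_2<T'_2$ (or $T'_2=T_2=T$), for $\tau$ small enough every such index $k$ satisfies $T'_1\leqslant k\tau\leqslant T'_2$. Hence, for $N$ large enough,
\begin{equation*}
\int_{T_1}^{T_2}U_m(\tilde\rho^{N,\lambda}_t)\ddr t \;\leqslant\; \sum_{T'_1\leqslant k\tau\leqslant T'_2}\tau\,U_m(\bar\rho^{N,\lambda}_{k\tau}).
\end{equation*}

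Next I would pass to the liminf. We already know that $\tilde\rho^{N,\lambda}$ converges uniformly in $W_2$ (hence pointwise in $t$, weakly-$*$) to $\bar\rho$. Since $U_m$ is weakly-$*$ lower semicontinuous (Proposition \ref{proposition_continuity_functionals}), for every $t$ we have $U_m(\bar\rho_t)\leqslant\liminf_{N,\lambda}U_m(\tilde\rho^{N,\lambda}_t)$, and since $U_m\geqslant 0$, Fatou's lemma yields
\begin{equation*}
\int_{T_1}^{T_2}U_m(\bar\rho_t)\ddr t \;\leqslant\;\liminf_{N\to+\infty,\lambda\to 0}\int_{T_1}^{T_2}U_m(\tilde\rho^{N,\lambda}_t)\ddr t \;\leqslant\;\bigl(L^m_{T'_1,T'_2}\bigr)^{m}.
\end{equation*}
Taking $m$-th roots and unpacking $U_m$, this reads
\begin{equation*}
\frac{1}{(m(m-1))^{1/m}}\,\|\bar\rho\|_{L^m([T_1,T_2]\times\Omega)}\;\leqslant\;L^m_{T'_1,T'_2}.
\end{equation*}

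Finally I would send $m\to+\infty$. The prefactor $(m(m-1))^{-1/m}$ tends to $1$, and since $[T_1,T_2]\times\Omega$ has finite (Lebesgue) measure, the classical fact $\lim_{m\to\infty}\|f\|_{L^m}=\|f\|_{L^\infty}\in[0,+\infty]$ on finite measure spaces concludes the argument:
\begin{equation*}
\esssup_{[T_1,T_2]\times\Omega}\bar\rho \;=\;\lim_{m\to+\infty}\|\bar\rho\|_{L^m([T_1,T_2]\times\Omega)}\;\leqslant\;\limsup_{m\to+\infty}L^m_{T'_1,T'_2}.
\end{equation*}

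There is no serious obstacle here: this is a standard LSC plus diagonal-limit argument, and the $L^m\to L^\infty$ passage is a textbook fact because the ambient measure space is finite. The only bookkeeping point that deserves attention is the comparison between the discrete sum and the continuous integral at the first step, which is precisely why the strict inclusions $T'_1<T_1$ and $T_2<T'_2$ (or the boundary variant $T'_2=T_2=T$, in which case the $k=N$ term in the sum provides an additional nonnegative contribution) are imposed.
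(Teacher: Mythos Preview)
Your proof is correct and follows essentially the same approach as the paper: compare the integral of $U_m(\tilde\rho^{N,\lambda}_t)$ to the discrete sum using the inclusion $[T_1,T_2]\subset[T'_1,T'_2]$ for $\tau$ small, apply lower semicontinuity of $U_m$ together with Fatou, take $m$-th roots, and finally let $m\to\infty$ using $\|f\|_{L^m}\to\|f\|_{L^\infty}$ on a finite measure space. The paper's write-up is slightly terser (it absorbs the factor $(m(m-1))^{-1/m}\to 1$ into the identity $\esssup\bar\rho=\limsup_m(\int U_m(\bar\rho_t)\ddr t)^{1/m}$ without comment), but the argument is the same.
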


\begin{proof}
We rely on the well-known identity 
$$
\esssup_{T_1 \leqslant t \leqslant T_2, \ x \in \Omega} |\bar{\rho}_t(x)|  = \limsup_{m \to + \infty} \left( \int_{T_1}^{T_2} \int_\Omega \bar{\rho}_t^m \ddr t \right)^{1/m}  = \limsup_{m \to + \infty} \left( \int_{T_1}^{T_2} U_m(\bar{\rho}_t) \ddr t \right)^{1/m}.
$$
For a fixed $m>1$ and for $\tau > 0$ small enough, one has 
\begin{equation*}
\int_{T_1}^{T_2} U_m(\tilde{\rho}^{N, \lambda}_t) \ddr t  \leqslant \sum_{T'_1 \leqslant k \tau \leqslant T'_2} \tau U_m(\bar{\rho}^{N, \lambda}_{k \tau}).  
\end{equation*} 
When sending $N \to \infty$ and $\lambda \to 0$, by lower semi-continuity of $U_m$ and by convergence of $\tilde{\rho}^{N, \lambda}$ to $\bar{\rho}$, we know that 
\begin{align*}
\int_{T_1}^{T_2} U_m(\bar{\rho}_t) \ddr t & \leqslant \liminf_{N \to + \infty, \lambda \to 0} \int_{T_1}^{T_2} U_m(\tilde{\rho}^{N, \lambda}_t) \ddr t \\
& \leqslant \liminf_{N \to + \infty, \lambda \to 0} \sum_{T'_1 \leqslant k \tau \leqslant T'_2} \tau U_m(\bar{\rho}^{N, \lambda}_{k \tau}).
\end{align*} 
Taking the power $1/m$ on each side and by definition of $L^m_{T'_1, T'_2 }$, one gets 
\begin{equation*}
\left( \int_{T_1}^{T_2} U_m(\bar{\rho}_t) \ddr t \right)^{1/m} \leqslant L^m_{T'_1 , T'_2 }. 
\end{equation*}
It is enough to take the limit $m \to + \infty$ to get the announced inequality.
\end{proof}

We can now conclude the desired bounds:

\begin{proof}[Proof of Theorem \ref{theorem_L_infty_bounds_strong}]
Combining Proposition \ref{proposition_norm_discrete_to_continuous} and Proposition \ref{proposition_conclusion_iteration_flow_interchange}, it is enough to show that $L^{\alpha + 2}_{0,T} < + \infty$. Because of Assumption \ref{assumption_strong_congestion} or \ref{assumption_strong_congestion-variant}, we know that $U_{\alpha + 2} \leqslant C_1 F + C_2$ with $C_1 > 0$. Hence, in order to conclude that $L^{\alpha + 2}_{0,T} < + \infty$, it is enough to use Corollary \ref{corollary_uniform_bounds}, which provides a constant $C < + \infty$ such that for any $N \geqslant 1$ and any $\lambda \in \intervalleof{0}{1}$ we have
\begin{equation*}
\sum_{k=1}^{N-1} \tau F(\bar{\rho}^{N, \lambda}_{k \tau}) \leqslant C.\qedhere
\end{equation*} 
\end{proof}

\begin{proof}[Proof of Theorem \ref{theorem_L_infty_bounds_boundary}]
We combine Proposition \ref{proposition_norm_discrete_to_continuous} and Proposition \ref{proposition_conclusion_iteration_flow_interchange_boundary}, as we saw that $L^{\alpha + 2}_{0,T} < + \infty$ (in the proof of Theorem \ref{theorem_L_infty_bounds_strong}).  
\end{proof}


\begin{proof}[Proof of Theorem \ref{theorem_L_infty_bounds_weak}]
It is enough to combine Proposition \ref{proposition_norm_discrete_to_continuous} with Propositions \ref{proposition_conclusion_iteration_flow_interchange_weak} and \ref{proposition_initialisation_weak_congestion}.
\end{proof}

\appendix

\section{Reverse Jensen inequality}
\label{appendix}

In this section, we prove Lemma \ref{lemma_reverse_Jensen} (the "reverse Jensen inequality") as well as Lemmas \ref{lemma_reverse_Jensen_Neumann} and \ref{lemma_reverse_Jensen_boundary}, whose proofs were postponed in order not to overload the key arguments of the paper. In all the sequel, we consider a family of sequences $(u_k^\tau)_{k \in \Z}$ indexed by a parameter $\tau > 0$. We assume that there exists $\omega \geqslant 0$ such that for any $k \in \Z$, one has $u_k^\tau > 0$ and
\begin{equation}
\label{equation_appendix_almost_convexity}
\frac{u^\tau_{k+1} + u^\tau_{k-1} - 2 u^\tau_{k}}{\tau^2} + \omega^2 u^\tau_k \geqslant 0.
\end{equation} 
This inequation is a discrete counterpart of the differential inequality $u'' + \omega^2 u \geqslant 0$. Let us remark, by the positivity of $u_k^\tau$, that we can assume without loss of generality that $\omega > 0$, even though the proofs are considerably simpler if $\omega = 0$: the constants would be better, and the strategy of the proof would be slightly different. The key point to handle $u_k^\tau$ is to compare it with explicit sequences realizing the opposite inequality in \eqref{equation_appendix_almost_convexity}. 

\begin{defi}
For any $\tau > 0$, let $\T^\tau$ be the set of sequences $(v_k)_{k \in \Z}$ of the form $v_k = A \cos( 2 \omega k \tau + \delta )$.
\end{defi}  

\begin{lm}
\label{lemma_v_subsolution}
There exists $\tau_0 > 0$ such that for any $\tau \leqslant \tau_0$, if $(v_k)_{k \in \Z} \in \T^\tau$ and $k$ is such that $v_k > 0$ then 
\begin{equation*}
\frac{v_{k+1} + v_{k-1} - 2 v_{k}}{\tau^2} + \omega^2 v_k < 0
\end{equation*} 
\end{lm}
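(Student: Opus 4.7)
The strategy is a direct computation exploiting the trigonometric structure of elements of $\T^\tau$. The key observation is the identity $\cos(a+b) + \cos(a-b) = 2\cos(a)\cos(b)$, which for $v_k = A\cos(2\omega k\tau + \delta)$ gives
\begin{equation*}
v_{k+1} + v_{k-1} = 2A\cos(2\omega k\tau + \delta)\cos(2\omega\tau) = 2 v_k \cos(2\omega\tau).
\end{equation*}
Thus the whole expression factors as
\begin{equation*}
\frac{v_{k+1} + v_{k-1} - 2v_k}{\tau^2} + \omega^2 v_k = v_k \left[ \frac{2\bigl(\cos(2\omega\tau) - 1\bigr)}{\tau^2} + \omega^2 \right].
\end{equation*}

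The next step is to analyse the bracket for small $\tau$. A Taylor expansion yields $\cos(2\omega\tau) - 1 = -2\omega^2\tau^2 + O(\omega^4\tau^4)$, so that
\begin{equation*}
\frac{2\bigl(\cos(2\omega\tau) - 1\bigr)}{\tau^2} + \omega^2 = -3\omega^2 + O(\omega^4\tau^2).
\end{equation*}
In particular, there exists $\tau_0 > 0$ (depending only on $\omega$) such that for any $\tau \leq \tau_0$, this bracket is bounded above by $-2\omega^2 < 0$.

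Since by assumption $v_k > 0$, multiplying the (strictly negative) bracket by $v_k$ yields a strictly negative number, which concludes the proof. I do not expect any serious obstacle: the statement is a finite-difference analogue of the elementary fact that $v(t) = A\cos(2\omega t + \delta)$ satisfies $v'' + \omega^2 v = -3\omega^2 v$, and the only delicate point is tracking the $O(\omega^4\tau^2)$ remainder carefully enough to extract a threshold $\tau_0$ (which will in fact depend on $\omega$, consistent with the convention used in the applications of this lemma in Section~4).
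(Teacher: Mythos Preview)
Your proof is correct and follows essentially the same approach as the paper: both factor the expression using the identity $\cos(a+b)+\cos(a-b)=2\cos a\cos b$ to obtain $\left(2\dfrac{\cos(2\omega\tau)-1}{\tau^2}+\omega^2\right)v_k$, and then observe that the bracket tends to $-3\omega^2$ as $\tau\to 0$. Your additional remark that $\tau_0$ depends on $\omega$ is also consistent with how the lemma is applied later in the paper.
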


\begin{proof}
This is a consequence of the trigonometric identity 
\begin{equation*}
\frac{v_{k+1} + v_{k-1} - 2 v_{k}}{\tau^2} + \omega^2 v_k = \left( 2 \frac{\cos(2 \omega \tau) - 1}{\tau^2} + \omega^2 \right) v_k
\end{equation*} 
and the fact that $\dst{2 \frac{\cos(2 \omega \tau) - 1}{\tau^2} + \omega^2} \sim - 3 \omega^2 $ as $\tau$ goes to $0$.
\end{proof}

We also note the following properties on the sequences in $\T^\tau$, that we do not prove and leave to the reader as an exercise.
\begin{itemize}
\item if $k_1 < k_2$ are fixed with $|k_2 -k_1| \tau \omega< \pi / 8$ and $\tau$ is small enough, then for every fixed positive values $a_1,a_2 > 0$ there exists a unique sequence in $\T^\tau$ with $v_{k_1}=a_1$ and $v_{k_2}=a_2$. Moreover, such a sequence $(v_k)_{k \in \Z}$ is such that there exists an open interval $I$ of the form either $(k_0\tau,k_1\tau)$ or $(k_2\tau,k_3\tau)$, with length at least $\pi/(8\omega)$, with $v_k>0$ for all the indices $k$ such that $k\tau\in I$.
\item if $k_1<N$ and $b \geqslant 0$ are fixed and $|N-k_1|\tau< \min \{ \pi / (8 \omega), \pi / (8 b) \}$ and $\tau$ is small enough, then for every $a>0$ there exists a unique sequence in $\T^\tau$ with $v_{k_1}=a$ and $(v_N-v_{N-1})/\tau=b v_N$. Moreover, such a sequence $(v_k)_{k \in \Z}$ is such that there exists an open interval $I$ of the form $(k_0\tau,k_1\tau)$ with length at least $\min\{\pi/(32\omega),\pi/(32 b)\}$, with $v_k>0$ for all the indices $k$ such that $k\tau\in I$.
\end{itemize}

Note that, for the purpose of Lemma \ref{lemma_v_subsolution} and of the subsequent observations other choices of $v_k$ were possible, such as $v_k = A \cos( (1+\varepsilon) \omega k \tau + \delta )$ for some $\varepsilon>0$, but we chose $\varepsilon=1$ for simplicity in the next computations (more generally in this appendix we have not been looking for the sharpest constants). Indeed, all these results are not surprising: at the continuous level $v$ solves $v'' + 4 \omega^2 v = 0$ and most of the discrete results are just an adaptation of this property. The important point is the following comparision principle between $(u^\tau_k)_{k \in \Z}$ and $(v_k)_{k \in \Z}$.
\begin{lm}
\label{lemma_comparision_u_v_Dirichlet}
Let $k_1 < k_2$ such that $|k_2 -k_1| \tau \omega< \pi / 8$ and assume $\tau \leqslant \tau_0$. Let $v \in \T^\tau$ the unique element of $\T^\tau$ such that $v_{k_1} = u_{k_1}^\tau$ and $v_{k_2} = u_{k_2}^\tau$. Let $k_0$ (resp. $k_3$) be the largest (resp. the smallest) index smaller that $k_1$ (resp. larger than $k_2$) such that $v_{k_0 - 1} < 0$ (resp. $v_{k_3 + 1} < 0$). 

Then $u_k^\tau \leqslant v_k$ for any $k_1 \leqslant k \leqslant k_2$ and $u_k^\tau \geqslant v_k$ for any $k_0 \leqslant k \leqslant k_1$ and any $k_2 \leqslant k \leqslant k_3$. 
\end{lm}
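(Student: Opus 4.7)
The plan is a two-step argument: a barrier argument on the matching interval $[k_1, k_2]$ for the first inequality, followed by a discrete-Wronskian propagation on the extensions $[k_0, k_1]$ and $[k_2, k_3]$ for the second. Throughout I would write $(L\phi)_k := (\phi_{k+1} + \phi_{k-1} - 2 \phi_k)/\tau^2 + \omega^2 \phi_k$, so the hypothesis on $u^\tau$ reads $(Lu^\tau)_k \geq 0$ and Lemma \ref{lemma_v_subsolution} gives $(Lv)_k < 0$ whenever $v_k > 0$.

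For the matching interval I would set $w_k := v_k - u_k^\tau$, so $w_{k_1} = w_{k_2} = 0$. The restriction $|k_2 - k_1|\tau\omega < \pi/8$ forces the cosine argument of $v$ to vary by less than $\pi/4$ between $k_1$ and $k_2$, and together with $v_{k_1}, v_{k_2} > 0$ this yields $v_k > 0$ throughout $[k_1, k_2]$; hence $(Lw)_k = (Lv)_k - (Lu^\tau)_k < 0$ strictly on the interior. Using $v$ itself as barrier, I would set $r_k := w_k/v_k$: if $r$ attained a strictly negative interior minimum at some $k^*$, the bounds $w_{k^* \pm 1} \geq r_{k^*} v_{k^* \pm 1}$ (from $r_{k^* \pm 1} \geq r_{k^*}$ and $v > 0$) would yield $(Lw)_{k^*} \geq r_{k^*} (Lv)_{k^*}$, whose right-hand side is strictly positive (product of two negatives). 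This contradicts $(Lw)_{k^*} < 0$, forcing $w \geq 0$ on $[k_1, k_2]$.

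For the extensions I would introduce the discrete Wronskian $W_k := u_k^\tau v_{k+1} - v_k u_{k+1}^\tau$, which also equals $\tilde w_k v_{k+1} - v_k \tilde w_{k+1}$ with $\tilde w_k := u_k^\tau - v_k$. Combining the identity $v_{k+1} + v_{k-1} = 2\cos(2\omega\tau) v_k$ (from $v \in \T^\tau$) with $u_{k+1}^\tau + u_{k-1}^\tau = (2 - \omega^2\tau^2) u_k^\tau + \tau^2 (Lu^\tau)_k$, a direct computation yields
\[
W_k - W_{k-1} = -v_k \bigl[ \lambda_\tau u_k^\tau + \tau^2 (Lu^\tau)_k \bigr], \qquad \lambda_\tau := 2 - 2\cos(2\omega\tau) - \omega^2 \tau^2 > 0
\]
for $\tau$ small. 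Hence $W$ is non-increasing on the range of indices where $v_k \geq 0$, in particular on $[k_0, k_3]$. The first step gives $\tilde w_{k_1 + 1} \leq 0$, so $W_{k_1} = -v_{k_1} \tilde w_{k_1 + 1} \geq 0$ and monotonicity yields $W_k \geq 0$ for every $k \leq k_1$. Rewriting the Wronskian identity as $\tilde w_k v_{k+1} = W_k + v_k \tilde w_{k+1}$ and using $v > 0$ on $[k_0, k_1]$ (which follows from the definition of $k_0$ and the cosine form of $v$), a backward induction starting from $\tilde w_{k_1} = 0$ would propagate $\tilde w_k \geq 0$ down to $k = k_0$. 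The argument on $[k_2, k_3]$ is symmetric, with $W_{k_2 - 1} = v_{k_2} \tilde w_{k_2 - 1} \leq 0$ serving as the starting bound and a forward induction from $\tilde w_{k_2} = 0$.

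The main obstacle is the asymmetry between the two claims: the barrier argument produces no contradiction on the extension intervals, because both the desired sign of $\tilde w$ and the sign of $(L\tilde w)$ switch compared with the matching interval, so a different device is required. The Wronskian propagation supplies it, at the cost of the monotonicity computation above and of checking that $v$ does not vanish inside the extensions; the degenerate case $v_{k_0} = 0$ (respectively $v_{k_3} = 0$) would be handled separately, since at that endpoint $\tilde w$ equals the positive $u^\tau$ and the induction could simply be started one step further in.
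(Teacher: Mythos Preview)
Your proof is correct and takes a genuinely different route from the paper's. The paper works with the piecewise affine interpolant $\bar w$ of $w_k = u_k^\tau - v_k$, interprets the discrete inequality as a distributional inequality $\bar w'' + \omega^2\sum_k \tau w_k\delta_{k\tau}\geq 0$, and rules out positive bumps of $\bar w$ by an energy (Poincar\'e-type) argument: multiplying by $\bar w$ and integrating by parts on a hypothetical interval $\{\bar w>0\}$ yields $\int |\bar w'|^2 \leq c\int |\bar w'|^2$ with $c<1$. The same contradiction is then recycled on the extension intervals after one observes (from the difference inequality at $k=k_1$) that $w_{k_1-1}>0$.

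Your argument replaces this variational device by two purely discrete tools. On $[k_1,k_2]$ you run a classical maximum-principle barrier: the ratio $r_k=w_k/v_k$ cannot have a negative interior minimum because at such a point $(Lw)_{k^*}\geq r_{k^*}(Lv)_{k^*}>0$, contradicting $(Lw)<0$. On the extensions you exploit that $v$ solves the constant-coefficient recursion exactly, so the discrete Wronskian $W_k=u_k^\tau v_{k+1}-v_k u_{k+1}^\tau$ satisfies the clean identity $W_k-W_{k-1}=-v_k[\lambda_\tau u_k^\tau+\tau^2(Lu^\tau)_k]$ and is monotone wherever $v\geq 0$; the sign of $W$ at the matching points then propagates $\tilde w\geq 0$ inductively. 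This is more elementary and more ODE-flavoured: it avoids the interpolation and the distributional framework entirely, and the length constraints on the interval enter only through the positivity of $v$, not through a Poincar\'e constant. The paper's approach, on the other hand, is more uniform (the same contradiction handles both the interior and the extensions), and its energy viewpoint matches the style of the surrounding Moser iteration arguments. Both approaches use the hypothesis $\tau\leq\tau_0$ in the same place: your $\lambda_\tau>0$ is exactly the condition $2(\cos(2\omega\tau)-1)/\tau^2+\omega^2<0$ that defines $\tau_0$ in Lemma~\ref{lemma_v_subsolution}.
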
 

\noindent In other words, $u^\tau$ is below $v$ between $k_1$ and $k_2$ and above outside $k_1$ and $k_2$ (as long as $v \geqslant 0$).

\begin{proof}
The fact that there exists only one $v \in \T^\tau$ such that $v_{k_1} = u_{k_1}^\tau$ and $v_{k_2} = u_{k_2}^\tau$ has been already observed above. Let us define $w_k = u^\tau_k-v_k$. By \eqref{equation_appendix_almost_convexity} and Lemma \ref{lemma_v_subsolution}, 
\begin{equation}
\label{equation_w_sursolution}
\frac{w_{k+1} + w_{k-1} - 2 w_k}{\tau^2} + \omega^2 w_k > 0
\end{equation}   
for any $k_0 \leqslant k \leqslant k_3$ and $w_{k_1} = w_{k_2} = 0$.  We want to prove $w_k\leq 0$ for every $k_1\leq k\leqslant k_2$. We consider the piecewise affine interpolation $\bar w$ of the values $w_k$: a function which is affine on each interval $[k\tau,(k+1)\tau]$ and is equal to $w_k$ at the point $k\tau$. The condition \eqref{equation_w_sursolution} translates on $\bar w$ as differential inequality in the sense of distributions:
\begin{equation}\label{distrib bar w}
\bar w''+\omega^2\sum_k \tau w_k\delta_{k\tau}\geq 0.
\end{equation}

Let us assume by contradiction that there is an open interval $I\subset (k_1\tau,k_2\tau)$ on which $\bar w>0$, with $\bar w=0$ on $\partial I$. We denote by $|I|$ the length of such an interval, and we have $|I|\leq |k_2-k_1|\tau$. By multiplying the above inequality by $\bar w$ and integrating by parts we get
$$\int_I |\bar w'|^2=-\int_I \bar w'' \bar w \leq \omega^2\sum_{k\ : \ k\tau\in I} \tau |w_k|^2.$$
Then, we observe that we have, for each $k$ s.t. $k\tau\in I$, 
\begin{equation*}
|w_k|\leq \frac 12 \int_I |\bar w'|\leq \frac12 \sqrt{|I|\int_I  |\bar w'|^2}.
\end{equation*}
The reason for the factor $1/2$ in the above inequality is the possibility to choose to integrate $\bar w'$ on an interval at the right or at the left of $k\tau$, and to choose the one where the integral of $|\bar w'|$ is smaller.
This implies
$$\int_I |\bar w'|^2\leq \omega^2\tau \;\#\{k:k\tau\in I\}\frac 14 |k_2-k_1|\tau\int_I  |\bar w'|^2.$$
Since $\{k:k\tau\in I\}\subset\{k:k_1<k<k_2\}$, we have $ \#\{k:k\tau\in I\}<|k_2-k_1|$ and the contradiction comes from the assumption $\omega\tau|k_2-k_1|<\pi/8 < 2$.

In order to prove $w_k\geq 0$ for $k_0\leq k\leq k_1$, we first observe that \eqref{equation_w_sursolution} for $k=k_1$, now that we know $w_{k_1+1}\leq 0$, implies $w_{k_1-1}> 0$. If for some $k$ with $k_0\leq k\leq k_1$ we had $w_k<0$, then we could find an open interval $J\subset (k_0\tau,k_1\tau)$ where $\bar w>0 $ with $\bar w=0$ on $\partial J$. We then apply the same approach as above, thus obtaining
$$\int_J |\bar w'|^2\leq \omega^2\tau \; \#\{k:k\tau\in J\}\frac 14|J|\int_J  |\bar w'|^2.$$
It is important to not that $J$ is contained in an interval of positivity of a function of the form $A\cos(2\omega t+\delta)$, whose length is $\pi/(2\omega)$; the number of points of the form $k\tau$ contained in an interval of such a length is at most $\pi/(2\omega\tau)+1$ but for $k=k_1, k_2$ the point $k\tau$ does not belong to the open interval $J$. Hence $\#\{k:k\tau\in J\}\leq \pi/(2\omega\tau)$, and we have a contradiction since $\pi^2<16$.
\end{proof}

We provide now a variant in the case where on the interval $(k_1\tau,k_2\tau)$ we impose a different boundary condition on the right end side.

\begin{lm}
\label{lemma_comparision_u_v_Neumann}
Let $k_1 < N$ and $b \geqslant 0$ such that $|N-k_1| \tau < \min \{ \pi / (8 \omega), \pi \ (8 b) \}$ and assume $\tau \leqslant \tau_0$. Suppose $(u_{N}-u_{N-1})/\tau\leq b u_N$. Let $v \in \T^\tau$ the unique element of $\T^\tau$ such that $v_{k_1} = u_{k_1}^\tau$ and $(v_{N}-v_{N-1})/\tau= bv_N$.  Let $k_0$ be the largest (resp. the smallest) index smaller that $k_1$such that $v_{k_0 - 1} < 0$.

Then $u_k^\tau \leqslant v_k^\tau$ for any $k_1 \leqslant k \leqslant N$ and $u_k^\tau \geqslant v_k^\tau$ for any $k_0 \leqslant k \leqslant k_1$. 
\end{lm}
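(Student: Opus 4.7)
The plan is to mimic Lemma \ref{lemma_comparision_u_v_Dirichlet}, replacing the Dirichlet zero at $k_2\tau$ by a boundary contribution coming from the new one-sided Neumann-type condition at $N\tau$. Set $w_k := u_k^\tau - v_k$. Subtracting the strict inequality of Lemma \ref{lemma_v_subsolution} from \eqref{equation_appendix_almost_convexity} yields
\begin{equation*}
\frac{w_{k+1} + w_{k-1} - 2 w_k}{\tau^2} + \omega^2 w_k > 0
\end{equation*}
wherever $v_k > 0$, while the boundary data are $w_{k_1} = 0$ and $(w_N - w_{N-1})/\tau \leq b w_N$ (since by construction $v$ saturates the Neumann equality).

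I would first establish $w_k \leq 0$ on $[k_1, N]$ by contradiction. Let $\bar{w}$ denote the piecewise affine interpolation of $w$. If $\bar{w}$ is positive somewhere in $(k_1\tau, N\tau]$, look at the connected component $I$ of $\{\bar{w} > 0\}$ containing that point. If $I$ has two interior zeros of $\bar{w}$ at its boundary, the argument of Lemma \ref{lemma_comparision_u_v_Dirichlet} transfers verbatim. The genuinely new case is $I = (a, N\tau]$ with $\bar{w}(a) = 0$ and $w_N > 0$. Integration by parts on $[a, N\tau]$, combined with the distributional almost-convexity of $\bar{w}$ tested against $\bar{w}$ itself, yields
\begin{equation*}
\int_a^{N\tau} (\bar{w}')^2 \, dt < w_N \cdot \frac{w_N - w_{N-1}}{\tau} + \omega^2 \sum_{k : k\tau \in I} \tau w_k^2 \leq b w_N^2 + \omega^2 \sum_{k : k\tau \in I} \tau w_k^2,
\end{equation*}
where the Neumann hypothesis is used for the boundary term. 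The Poincaré-type bound $w_k^2 \leq |I| \int_I (\bar{w}')^2$ (applicable equally to $w_N^2$ via $w_N = \int_a^{N\tau} \bar{w}'$) then gives $\int_I (\bar{w}')^2 < (b|I| + \omega^2 |I|^2) \int_I (\bar{w}')^2$ up to lower-order corrections vanishing as $\tau \to 0$. The hypothesis $|I| \leq (N-k_1)\tau < \min\{\pi/(8\omega), \pi/(8b)\}$ bounds the prefactor by $\pi/8 + \pi^2/64 < 1$, contradicting $\int_I (\bar{w}')^2 > 0$. The main subtlety is that, unlike in the Dirichlet case, one cannot exploit the factor $1/2$ coming from the choice of the nearer of two zeros, so the combined smallness of $|I|$ imposed by both $\omega$ and $b$ is essential.

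For the reverse inequality $w_k \geq 0$ on $[k_0, k_1]$, the strict almost-convexity at $k = k_1$ combined with $w_{k_1} = 0$ and the already established $w_{k_1 + 1} \leq 0$ forces $w_{k_1 - 1} > 0$. Any connected component $J$ of $\{\bar{w} < 0\}$ inside $(k_0\tau, k_1\tau)$ therefore has both boundary zeros in the interior, and its length is bounded by the positivity half-period $\pi/(2\omega)$ of the cosine defining $v$. The Dirichlet argument of Lemma \ref{lemma_comparision_u_v_Dirichlet} applied to $-\bar{w}$ on $J$ closes the case since $\omega^2 |J|^2 / 4 < \pi^2 / 16 < 1$.
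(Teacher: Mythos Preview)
Your treatment of the inner region $[k_1,N]$ is correct and essentially identical to the paper's: the integration by parts picks up the Neumann boundary term $w_N(w_N-w_{N-1})/\tau\leq b w_N^2$, the one-sided Poincar\'e bound (no factor $1/2$) gives the prefactor $b|I|+\omega^2|I|^2$, and the smallness hypothesis makes this strictly less than $1$.

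The outer-region argument, however, has a real gap. You propose to rule out components $J$ of $\{\bar w<0\}$ by applying the Dirichlet contradiction to $-\bar w$. But $-\bar w$ satisfies the \emph{reversed} inequality: from
\[
\frac{w_{k+1}+w_{k-1}-2w_k}{\tau^2}+\omega^2 w_k>0
\]
one obtains, for $\tilde w:=-w$, that $(\tilde w_{k+1}+\tilde w_{k-1}-2\tilde w_k)/\tau^2+\omega^2\tilde w_k<0$. Testing this against $\tilde w\geq 0$ on $J$ (with $\tilde w=0$ on $\partial J$) and integrating by parts yields
\[
\int_J(\bar w')^2>\omega^2\sum_{k:k\tau\in J}\tau w_k^2,
\]
which is the Poincar\'e inequality in the \emph{harmless} direction and produces no contradiction. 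Concretely, where $w_k<0$ the discrete inequality only says that $w$ is strictly convex there, and a convex function with two zeros is perfectly allowed to be negative in between.

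The fix is to argue, as in the Dirichlet lemma, on components of $\{\bar w>0\}$ rather than $\{\bar w<0\}$. You have already shown $w_{k_1-1}>0$. If some $w_k<0$ for $k_0\leq k\leq k_1$, then the connected component of $\{\bar w>0\}$ containing $(k_1-1)\tau$ has a genuine zero at its right end ($k_1\tau$) and another at its left end (between $k\tau$ and $(k_1-1)\tau$), hence lies inside $(k_0\tau,k_1\tau)$. On that component $\bar w>0$, so the differential inequality has the correct sign, and the length bound $|J|\leq\pi/(2\omega)$ together with the two-sided Poincar\'e factor $1/4$ gives $\omega^2|J|^2/4<\pi^2/16<1$, which is the desired contradiction.
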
 
\begin{proof}
The argument is very similar to the one in Lemma \ref{lemma_comparision_u_v_Dirichlet}. We first define $w_k=u_k-v_k$, as well as the piecewise affine interpolation $\bar w$ of the values $w_k$, which satisfies again \eqref{distrib bar w}, but also $w'(T)\leq bw(T)$, where $T=N\tau$. 

Then, we assume by contradiction that there is an open interval $I\subset (k_1\tau,N\tau)$ on which $\bar w>0$. If $\bar w=0$ on $\partial I$ (i.e., on both points on the boundary), the argument is really the same. Otherwise, we can assume $I=(t,T)$, with $\bar w(t)=0$. By multiplying by $\bar w$ and integrating by parts we get
$$\int_I |\bar w'|^2=\bar w(T)\bar w'(T)-\int \bar w'' \bar w \leq b|\bar w(T)|^2+ \omega^2\sum_{k\ : \ k\tau\in I} \tau |w_k|^2.$$
Then, we use that on $I$ we have
\begin{equation*}
|\bar w|\leq \int_I |\bar w'|\leq \sqrt{|I|\int_I  |\bar w'|^2}.
\end{equation*}
We do not have anymore the factor $1/2$ because $\bar w$ only vanishes at one end, now.  This implies
$$\int_I |\bar w'|^2\leq|I|\left( \omega^2\tau \;\#\{k:k\tau\in I\}+b\right)\int_I  |\bar w'|^2.$$
Since  $ \#\{k:k\tau\in I\}<|N-k_1|$ and $|I|\leq |N-k_1|\tau$, using the assumptions on $|N-k_1|$ we have 
$$\int_I |\bar w'|^2\leq\left( \frac{\pi}{8}+\left( \frac{\pi}{8}\right)^2\right)\int_I  |\bar w'|^2.$$
This is a contradiction, since
\begin{equation*}
\frac{\pi}{8}+\left( \frac{\pi}{8}\right)^2<\frac 12+\frac 14<1.\qedhere
\end{equation*}
\end{proof}

With the two lemma above, we are able to deduce some Harnack-type inequality, which means that we can control the values of a $u$ satisfying \eqref{equation_appendix_almost_convexity} in the interior of an interval with the values of $u$ outside the interval. 

\begin{lm}
\label{lemma_Harnack}
Let $k_1 < k_2$ such that $|k_2 -k_1| \tau \omega< \pi / 8$ and assume $\tau \leqslant \tau_0$. Let $k_0$ (resp. $k_3$) be the smallest (resp. largest) integer smaller than $k_1$ (resp. larger than $k_2$) such that $(k_1 -k_0) \tau \omega < \pi /8$ (resp. $(k_3 -k_2) \tau \omega < \pi /8$). Then one has
\begin{equation*}
\sup_{k_1 \leqslant k \leqslant k_2} u_k^\tau \leqslant C \max \left( \inf_{k_0 \leqslant k \leqslant k_1} u_k^\tau \ , \ \inf_{k_2 \leqslant k \leqslant k_3} u_k^\tau  \right), 
\end{equation*}
where the constant $C$ is universal. 
\end{lm}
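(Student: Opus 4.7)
First, I pick $\tilde{k}_L \in \{k_0,\ldots,k_1\}$ attaining $m_L := \inf_{k_0\leqslant k\leqslant k_1} u_k^\tau$, and similarly $\tilde{k}_R \in \{k_2,\ldots,k_3\}$ attaining $m_R$. I apply Lemma \ref{lemma_comparision_u_v_Dirichlet} on $[\tilde{k}_L,\tilde{k}_R]$, whose length is at most $(k_3-k_0)\tau < 3\pi/(8\omega)$, to obtain $v\in\mathcal{T}^\tau$ with $v_{\tilde{k}_L} = m_L$, $v_{\tilde{k}_R} = m_R$, and $u_k^\tau \leqslant v_k$ on this interval. The stated hypothesis $|k_2-k_1|\tau\omega<\pi/8$ is not literally met (we only have $|\tilde{k}_R-\tilde{k}_L|\tau\omega < 3\pi/8$), but a reading of the proof shows that the only constraint it really uses is $\omega\tau|k_2-k_1|<2$ (the number appearing at the final contradiction step), which is satisfied since $3\pi/8<2$. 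Moreover, since $3\pi/(8\omega)$ is strictly less than the half-period $\pi/(2\omega)$ of elements of $\mathcal{T}^\tau$ and $v$ is positive at both matching points, $v$ is strictly positive throughout $[\tilde{k}_L,\tilde{k}_R]$.

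Next, I estimate $\sup_{k_1\leqslant k\leqslant k_2} v_k$ by elementary trigonometry. Writing $v_k = A\cos\psi_k$ with $\psi_k := 2\omega k\tau + \delta$, the phases $\psi_L := \psi_{\tilde{k}_L}$ and $\psi_R := \psi_{\tilde{k}_R}$ both lie in $(-\pi/2,\pi/2)$ (in the same period) and satisfy $\psi_R - \psi_L < 3\pi/4$. The matching conditions give $A(\cos\psi_L + \cos\psi_R) = m_L + m_R$. If the peak phase $0$ lies outside $[\psi_L,\psi_R]$, monotonicity of $\cos$ yields $\sup v \leqslant \max(m_L,m_R)$. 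Otherwise $\psi_L<0<\psi_R$, so $|\psi_L+\psi_R|/2 < \pi/4$, and the sum-to-product identity
\[
\cos\psi_L + \cos\psi_R \;=\; 2\cos\tfrac{\psi_L+\psi_R}{2}\cos\tfrac{\psi_R-\psi_L}{2} \;\geqslant\; 2\cos\tfrac{\pi}{4}\cos\tfrac{3\pi}{8} \;=\; \sqrt{2}\sin\tfrac{\pi}{8}
\]
forces $\sup v \leqslant A \leqslant (m_L+m_R)/(\sqrt{2}\sin(\pi/8))$.

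Combining the two bounds gives $\sup_{k_1\leqslant k\leqslant k_2} u_k^\tau \leqslant \sup v_k \leqslant C(m_L+m_R) \leqslant 2C\max(m_L,m_R)$ with $C := 1/(\sqrt{2}\sin(\pi/8))$ a universal constant, which is the Harnack-type inequality of the lemma. The main obstacle is justifying the use of Lemma \ref{lemma_comparision_u_v_Dirichlet} on the longer interval $[\tilde{k}_L,\tilde{k}_R]$: this requires peeking into its proof to extract the weaker hypothesis $\omega\tau|k_2-k_1|<2$, after which one must also check that the sequence $v$ stays strictly positive on the whole interval (so that the comparison really applies there). Once these two points are secured, the rest of the argument reduces to an elementary computation on the super-solution $v$.
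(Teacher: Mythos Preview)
Your argument is correct, and the route differs from the paper's in an interesting way. The paper applies Lemma \ref{lemma_comparision_u_v_Dirichlet} on the original interval $[k_1,k_2]$ exactly as stated, and then exploits \emph{both} conclusions of that lemma: $u\leqslant v$ inside and $u\geqslant v$ outside. After reducing by symmetry to $u_{k_1}^\tau\leqslant u_{k_2}^\tau$, it shows that the comparison function $v$ stays above $A\cos(3\pi/8)$ throughout $[k_2,k_3]$, so the outside inequality gives $\inf_{[k_2,k_3]}u^\tau\geqslant A/C\geqslant \sup_{[k_1,k_2]}u^\tau/C$. You instead pick the infimum points $\tilde{k}_L,\tilde{k}_R$ first and build $v$ through them on the longer interval $[\tilde{k}_L,\tilde{k}_R]$, so that only the \emph{inside} comparison $u\leqslant v$ is needed and no symmetry reduction is required; the price is that the literal hypothesis $|k_2-k_1|\tau\omega<\pi/8$ of Lemma \ref{lemma_comparision_u_v_Dirichlet} must be relaxed to the weaker bound $<2$ that its proof actually uses, together with the separate check that $v>0$ on the whole interval (which your half-period argument handles since $2\omega(\tilde{k}_R-\tilde{k}_L)\tau<3\pi/4<\pi$). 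Your trigonometric bound on $A$ via the sum-to-product identity is clean and gives an explicit constant. Both arguments yield the same universal estimate; the paper's has the advantage of invoking the comparison lemma as a black box, while yours is arguably more direct once the lemma's hypothesis is loosened.
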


\begin{proof}
Given the symmetry of the property we want to prove w.r.t. to time reversal, we can assume that $u^\tau_{k_1} \leqslant u^\tau_{k_2}$. Let $v \in \T^\tau$ be the unique element of $\T^\tau$ such that $v_{k_1} = u_{k_1}^\tau$ and $v_{k_2} = u_{k_2}^\tau$. We know that it can be written in the form $v_k = A \cos(k \tau \omega + \delta)$ with $A \geqslant 0$. In particular, $A \geqslant |v_k|$ for any $k \in \Z$. Up to a time translation, we can assume that $\delta = 0$ and $k_1 \leqslant 0 \leqslant k_2$. By the hypothesis $u_{k_1}^\tau \leqslant u_{k_2}^\tau$, and $|k_2 -k_1| \tau \omega< \pi / 8$, we can even say that $|k_2| \leqslant |k_1|$; thus, one has $k_2 \tau \leqslant \pi /(16 \omega)$. In particular, for any $k_2 \leqslant k \leqslant k_3$, we can say more than $v_k > 0$:
\begin{align*}
v_k & \geqslant A \cos \left( 2 \omega k_3 \tau \right) \\
& \geqslant A \cos \left( 2 k_2 \omega \tau + 2(k_3 - k_2) \omega \tau \right) \\
& \geqslant \cos \left( \frac{\pi}{8} + \frac{\pi}{4} \right) \sup_{k' \in \Z} |v_{k'}| \\
& \geqslant \frac{1}{C} \sup_{k' \in \Z} |v_{k'}|,  
\end{align*}   
with $C = \cos(3 \pi / 8)^{-1} < + \infty$. Thus, by using the comparison between $u^\tau$ and $v$ (Lemma \ref{lemma_comparision_u_v_Dirichlet}), one can say that, for any $k_2 \leqslant k  \leqslant k_3$, 
\begin{equation*}
u_k^\tau \geqslant \frac{1}{C} \sup_{k_1 \leqslant k' \tau \leqslant k_2} u_{k'}^\tau,
\end{equation*} 
which easily implies the claim. 
\end{proof}

We also provide the same type of lemma but where a different condition is imposed on the right end side, namely a Neumann-type boundary condition.

\begin{lm}
\label{lemma_Harnack_Neumann}
Let $k_1 < N$ and $b \geqslant 0$ such that $|N-k_1| \tau \leqslant \min\{ \pi / (32 \omega), \pi / (32 b) \}$ and assume $\tau \leqslant \tau_0$. Suppose $(u_{N}-u_{N-1})/\tau\leq b u_N$. Let $k_0$ be the smallest integer smaller than $k_1$ such that $(k_1 - k_0) \tau \leqslant \min \{ \pi /(32 \omega), \pi /(32b) \}$. Then one has 
\begin{equation*}
\sup_{k_1 \leqslant k \leqslant N} u^\tau_k \leqslant C \inf_{k_0 \leqslant k \leqslant k_1} u^\tau_k,
\end{equation*}
where the constant $C$ is universal.
\end{lm}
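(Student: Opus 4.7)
The proof mirrors that of Lemma \ref{lemma_Harnack}, using Lemma \ref{lemma_comparision_u_v_Neumann} in place of Lemma \ref{lemma_comparision_u_v_Dirichlet} together with the corresponding existence statement (the second bullet after Lemma \ref{lemma_v_subsolution}). Let $v \in \T^\tau$ be the unique element with $v_{k_1} = u_{k_1}^\tau$ and $(v_N - v_{N-1})/\tau = b v_N$. Writing $v_k = A \cos(\theta_k)$ with $\theta_k := 2\omega k\tau + \delta$ and $A > 0$, the length constraint $(k_1 - k_0)\tau \leqslant \min\{\pi/(32\omega),\pi/(32b)\}$ matches the lower bound on the length of the positivity interval of $v$ furnished by that observation, so every $k \in [k_0, k_1]$ lies in that positivity interval. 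Lemma \ref{lemma_comparision_u_v_Neumann} then gives $u_k^\tau \leqslant v_k$ on $[k_1, N]$ and $u_k^\tau \geqslant v_k$ on $[k_0, k_1]$, reducing the claim to $\sup_{k_1 \leqslant k \leqslant N} v_k \leqslant C \inf_{k_0 \leqslant k \leqslant k_1} v_k$.

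Next, I would locate $\theta_N$. Substituting $v_k = A\cos(\theta_k)$ into the Neumann condition and using $\cos a - \cos(a - 2\omega\tau) = -2\sin(a-\omega\tau)\sin(\omega\tau)$ yields
\begin{equation*}
\sin(\theta_N - \omega\tau) = -\frac{b\tau}{2\sin(\omega\tau)}\cos(\theta_N).
\end{equation*}
Since $v_N \geqslant u_N^\tau > 0$ forces $\cos\theta_N > 0$ and the right-hand side is $\leqslant 0$, for $\tau \leqslant \tau_0$ small one obtains $\theta_N \in (-\pi/2, \omega\tau_0]$ with $|\tan\theta_N| \leqslant b/(2\omega) + O(\tau)$. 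In the generic case $\theta_N \leqslant 0$, all $\theta_k$ for $k_0 \leqslant k \leqslant N$ lie in $(-\pi/2, 0]$, where $\cos$ is increasing; hence $\sup_{[k_1,N]} v_k = A \cos\theta_N$ and $\inf_{[k_0,k_1]} v_k = A \cos\theta_{k_0}$.

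Setting $\xi := \theta_N - \theta_{k_0} = 2\omega(N-k_0)\tau \leqslant \pi/8$ (which follows from $(N-k_0)\tau \leqslant 2\min\{\pi/(32\omega),\pi/(32b)\}$), the cosine addition formula gives
\begin{equation*}
\frac{\cos\theta_{k_0}}{\cos\theta_N} = \cos\xi + \tan(\theta_N)\sin\xi \geqslant \cos(\pi/8) - \left(\frac{b}{2\omega} + O(\tau)\right)\sin\xi,
\end{equation*}
and since $(b/(2\omega))\sin\xi \leqslant b(N-k_0)\tau \leqslant \pi/16$, for $\tau_0$ small enough this ratio is bounded below by $\cos(\pi/8) - \pi/16 - O(\tau_0) > 0$, which yields the required universal $C$. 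The remaining case $\theta_N \in (0, \omega\tau_0]$ (possible only when $b$ is very small) is handled similarly, as all $\theta_k$ then stay within $\pi/8 + \omega\tau_0$ of $0$ and $\cos\theta_k$ is pinched between two universal constants.

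The main obstacle is purely technical bookkeeping rather than conceptual: one must verify that the present $k_0$ sits inside the positivity interval of $v$ (so that both halves of Lemma \ref{lemma_comparision_u_v_Neumann} are applicable), and absorb the $O(\tau)$ discrepancy between the discrete Neumann identity and its continuous analog $\tan\theta_N = -b/(2\omega)$; both are controlled by the stated length constraint and by shrinking $\tau_0$ sufficiently.
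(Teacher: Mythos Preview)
Your proof is correct and follows the same strategy as the paper: reduce to the comparison sequence $v$ via Lemma \ref{lemma_comparision_u_v_Neumann}, then bound the ratio $\sup v/\inf v$ on the relevant intervals using the explicit trigonometric form of $v$. The paper normalizes to $N\tau=0$, derives $b=-2\omega\tan\delta+O(\omega\tau)$, and then bounds $\delta$ via $\arctan$ identities, whereas you keep track of $\theta_N$ directly and use the cosine addition formula together with the key observation $(b/(2\omega))\sin\xi\leqslant b(N-k_0)\tau\leqslant\pi/16$; the two computations are equivalent and yield the same universal constant up to minor differences.
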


\begin{proof}
The strategy of the proof is the same than for Lemma \ref{lemma_Harnack}. We take $v$ to be the unique element of $\T^\tau$ such that $v_{k_1} = u_{k_1}^\tau$ and $(v_{N}-v_{N-1})/\tau= bv_N$. We know that $v$ is of the form $v_k = A \cos(2 k \tau \omega + \delta )$. Up to a time translation, we can assume that $N \tau = 0$ and take $\delta \in \intervalleoo{- \pi /2}{ \pi / 2}$. Starting from $(v_{N}-v_{N-1})/\tau= bv_N$ and using well known factorization formulas, one ends up with 
\begin{equation*}
b = - 2\omega \tan(\delta) + O(\omega \tau).
\end{equation*} 
Thus, if $\tau \leqslant \tau_0$, one can say that $\arctan(- b / \omega) \leqslant \delta \leqslant \arctan(-b/(4 \omega))$. Hence, using the fact that $\arctan(t) + \arctan(1/t) = - \pi / 2$ (if $t < 0$) and that $\min \{ \pi t / 4, \pi /4 \} \leqslant \arctan(t) \leqslant t$ (if $t \geqslant 0$), one concludes that 
\begin{equation*}
\min \left\{ - \frac{\pi}{2} + \frac{\pi \omega}{4 b}, - \frac{\pi}{4} \right\} \leqslant \delta \leqslant \min \left\{ - \frac{\pi}{2} + \frac{4\omega}{b}, 0 \right\}.  
\end{equation*}
In other words, $\delta$ cannot be too close to $- \pi /2$, the point where the cosine vanishes. Given the information that we have on $k_1$ and $k_0$, one can check that 
\begin{align*}
\delta - 2 \omega \tau k_0 & = \delta - 2 \omega \tau k_1 - 2 \omega \tau (k_0 - k_1) \\
& \geqslant \min \left\{ - \frac{\pi}{2} + \frac{\pi \omega}{4 b}, - \frac{\pi}{4} \right\} - 2 \min \left\{ \frac{\pi}{16}, \frac{\pi \omega}{16 b} \right\} \\
& \geqslant \min \left\{ - \frac{\pi}{2} + \frac{\pi \omega}{8 b}, - \frac{3 \pi}{8} \right\}.
\end{align*} 
As, for every $k_0 \leqslant k \leqslant N$, one has 
\begin{equation*}
A \cos(\delta - 2 \omega \tau k_0) \leqslant v_k \leqslant A \cos(\delta),
\end{equation*}
it is easy to conclude that
\begin{equation*}
\frac{\sup_{k_0 \leqslant k \leqslant N} v_k}{\inf_{k_0 \leqslant k \leqslant N} v_k}  \leqslant \frac{\cos( \min \{ - \frac{\pi}{2} + \frac{4\omega}{b}, 0\} )}{  \cos( \min \{- \frac{\pi}{2} + \frac{\pi \omega}{8b}, \frac{3 \pi}{8} \} )  } \leqslant C,
\end{equation*} 
where the value of $C$ can be estimated by noting that if $\omega/b\ll 1$ both the numerator and the denominator are of the order of $\omega/b$ and if $\omega/b$ is not small the denominator is far from $0$ and the numerator is bounded by $1$. This proves that $C$ is a universal constant. It remains to use Lemma \ref{lemma_comparision_u_v_Neumann} to transfer the above inequality into an information on $u^\tau$.  
\end{proof}

To conclude, we can prove the Lemmas \ref{lemma_reverse_Jensen}, \ref{lemma_reverse_Jensen_Neumann} and \ref{lemma_reverse_Jensen_boundary} that we used throughout the paper, by using the above results. To prove Lemma \ref{lemma_reverse_Jensen}, we cut the interval $\intervalleff{T_1}{T_2}$ into several pieces of length of order $1 / \omega$, on each piece we use the Harnack inequality to exchange the sum and the power $1 / \beta$, and we use rough comparisons to put the pieces together. 

\begin{proof}[Proof of Lemma \ref{lemma_reverse_Jensen}]
Let $M$ be the smallest integer larger than $8 \omega (T_2 - T_1)/ \pi +1$. We cut the interval $\intervalleff{T_1}{T_2}$ into $M$ closed intervals $I_1, I_2, \ldots, I_M$ of equal length (all equal to $(T_2 - T_1)/M < \pi / (8 \omega)$). Let us choose an interval $I_i$, we can use Lemma \ref{lemma_Harnack} to write
\begin{align*}
\left( \sum_{k \ : \ k \tau \in I_i} \tau u_k^\tau \right)^{1/\beta} & \leqslant  (|I_i| + \tau)^{1/\beta} \sup_{k \tau \in I_i} (u^k_\tau)^{1 / \beta} \\
& \leqslant C (|I_i| + \tau)^{1/\beta} \left( \inf_{T_1^i - \eta \leqslant k \tau \leqslant T_1^i} (u_k^\tau)^{1/\beta} + \inf_{T_2^i \leqslant k \tau \leqslant T_2^i + \eta} (u_k^\tau)^{1/\beta}  \right) \\
& \leqslant C \frac{(|I_i| + \tau)^{1/\beta}}{\eta} \sum_{k \ : \ k \tau \in I_i \pm \eta} (u_k^\tau)^{1 / \beta}, 
\end{align*} 
where $I_i \pm \eta$ denotes the set of real numbers which are at a distance at most $\eta$ of $I_i$. Then we put together the estimate for each $I_i$:
\begin{align*}
\left( \sum_{T_1 \leqslant k \tau \leqslant T_2} \tau u_k^\tau \right)^{1/ \beta} & \leqslant \left( \sum_{i=1}^M \sum_{k \ : \ k \tau \in I_i} \tau u_k^\tau \right)^{1/ \beta} \\
& \leqslant M^{1/ \beta} \sum_{i=1}^M \left( \sum_{k \ : \ k \tau \in I_i} \tau u_k^\tau \right)^{1/ \beta} \\
& \leqslant C M^{1/ \beta} \left( \frac{T_2 - T_1}{M} + \tau \right)^{1/ \beta} \frac{1}{\eta} \sum_{i=1}^M \sum_{k \ : \ k \tau \in I_i \pm \eta } \tau \left( u_k^\tau \right)^{1/ \beta} \\
& \leqslant C \frac{M(T_2 - T_1 + M\tau)^{1/ \beta}}{\eta} \sum_{T_1 - \eta \leqslant k \tau \leqslant T_2 + \eta} \tau \left( u_k^\tau \right)^{1/ \beta} \\
& \leqslant C \frac{(\omega + 1) (T_2 - T_1 + 1)^{1+1/ \beta}}{\eta}  \sum_{T_1 - \eta \leqslant k \tau \leqslant T_2 + \eta} \tau \left( u_k^\tau \right)^{1/ \beta},   
\end{align*}
where we have used the fact that $M \tau \leqslant 1$ if $\tau \leqslant \tau_0$ (where $\tau_0$ depends on $\omega$) and also that $M$ can be estimated by a constant times $\omega+1$.
\end{proof} 

\begin{proof}[Proof of Lemma \ref{lemma_reverse_Jensen_Neumann}]
For the first part, we apply Lemma \ref{lemma_Harnack_Neumann} with $k_1 = N$. With the choice of $\eta$, one has $(k_1-k_0) \tau \leqslant \min \{ \pi /(32 \omega), \pi /(32b) \}$. Thus, one can write that 
\begin{equation*}
u_N^\tau \leqslant C \inf_{T - \eta \leqslant k \tau \leqslant T} u^\tau_k,
\end{equation*}
which is enough to to conclude as the r.h.s. is bounded by the mean of $u^\tau_k$, for $T - \eta \leqslant k \tau \leqslant T$. 

For the second part (which is a variant of Lemma \ref{lemma_reverse_Jensen}, but with Neumann boundary conditions on one side), we can say with the help of Lemma \ref{lemma_Harnack_Neumann} that with $k_1$ the smallest integer smaller than $N$ such that $|N-k_1| \tau \max\{\omega,b \} < \pi / 32$, 
\begin{align*}
\left( \sum_{k_1 \tau \leqslant k \tau \leqslant T} \tau u_k^\tau \right)^{1/ \beta} & \leqslant |T - k_1 \tau + \tau|^{1/\beta} \sup_{k_1 \tau \leqslant k \tau \leqslant T} (u_k^\tau)^{1/ \beta} \\
& \leqslant C |T - k_1 \tau + \tau|^{1/\beta} \inf_{k_1 \tau - \eta \leqslant k \tau \leqslant k_1 \tau} (u_k^\tau)^{1 / \beta} \\
& \leqslant \frac{C |T - k_1 \tau + \tau|^{1/\beta}}{\eta} \sum_{k_1 \tau - \eta \leqslant k \tau \leqslant k_1 \tau} \tau (u_k^\tau)^{1 / \beta}.   
\end{align*}
Then, we combine this estimate with the interior estimate Lemma \ref{lemma_reverse_Jensen} (with $T_2 = T - k_1 \tau$) to end up with the announced result. 
\end{proof}

\begin{proof}[Proof of Lemma \ref{lemma_reverse_Jensen_boundary}]
We apply Lemma \ref{lemma_Harnack_Neumann} with $k_1 = 0$. Thus if $T = kN \leqslant \min \{ \pi / (32 \omega), \pi / (32b) \}$, one has 
\begin{equation*}
\sup_{0 \leqslant k \tau \leqslant T} u^\tau_k \leqslant C  u^\tau_{0} = Ca. 
\end{equation*}
Thus, the l.h.s. is bounded by a constant which does not depend on $N$.  
\end{proof}

\section*{Acknowledgments}

Both authors acknowledge the support of the French ANR via the contracts ISOTACE (ANR-12-MONU-0013) and MFG (ANR-16-CE40-0015-01) and benefited from the support of the FMJH ``Program Gaspard Monge for optimization and operations research and their interactions with data science'' and EDF via the PGMO project  VarPDEMFG.


\addcontentsline{toc}{section}{References}
\bibliographystyle{plain}
\bibliography{bibliography}

\end{document}